\newtheorem{theorem}{Theorem}[section]
\newtheorem{remark}{Remark}
\newtheorem{proposition}[theorem]{Proposition}
\newtheorem{corollary}[theorem]{Corollary}
\newtheorem{definition}[theorem]{Definition}
\renewcommand{\P}{\mathbb{P}}
\renewcommand{\d}{\mathrm{d}}
\DeclareMathOperator{\dlim}{\mathrm{dlim}}
\DeclareMathOperator{\argmin}{\mathrm{argmin}}
\DeclareMathOperator{\E}{\mathbb{E}}
\DeclareMathOperator{\Z}{\mathbb{Z}}
\DeclareMathOperator{\R}{\mathbb{R}}
\DeclareMathOperator{\F}{\mathcal{F}}
\DeclareMathOperator{\bfZ}{\mathbf{Z}}
\DeclareMathOperator{\bfu}{\mathbf{u}}
\DeclareMathOperator{\bfa}{\mathbf{a}}
\renewcommand{\bfu}{\mathbf{u}}
\DeclareMathOperator{\bfx}{\mathbf{x}}
\DeclareMathOperator{\bfX}{\mathbf{X}}
\DeclareMathOperator{\bfY}{\mathbf{Y}}
\DeclareMathOperator{\bfH}{\mathbf{H}}
\DeclareMathOperator{\bfB}{\mathbf{B}}
\DeclareMathOperator{\bfN}{\mathbf{N}}
\DeclareMathOperator{\N}{\mathbb{N}}
\DeclareMathOperator{\bfeta}{\mathbf\eta}
\DeclareMathOperator{\bfw}{\mathbf w}
\DeclareMathOperator{\bfepsilon}{\mathbf \varepsilon}
\DeclareMathOperator{\Var}{\mathrm{Var}}
\DeclareMathOperator{\Cov}{\mathrm{Cov}}
\renewcommand{\vec}{\mathrm{vec}}
\newcommand{\diag}{\mathrm{diag}}
\definecolor{light-gray}{gray}{0.9}
\begin{document}
\title{An estimation procedure for the Hawkes process}
\author{Matthias Kirchner}
 \date{Revised version: January 01, 2017}

\maketitle
\begin{abstract}
In this paper, we present a nonparametric estimation procedure for the multivariate Hawkes point process. The timeline is cut into bins and---for each component process---the number of points in each bin is counted. {As a consequence of earlier results in \citet{kirchner16c},} the distribution of the resulting `bin-count sequences' can be approximated by an integer-valued autoregressive model known as the (multivariate) INAR($p$) model. We represent the INAR($p$) model as a standard vector-valued linear autoregressive time series with white-noise innovations (VAR($p$)). 
We establish consistency and asymptotic normality for conditional least-squares estimation of the VAR($p$), respectively, the INAR($p$) model. After appropriate scaling, these time series estimates yield estimates for the underlying multivariate Hawkes process as well as {corresponding variance estimates}.  {The estimator depends on a bin-size $\Delta$ and a support $s$. We discuss the impact and the choice of these parameters. }All results are presented in such a way that computer implementation, e.g., in {\tt R}, is straightforward. Simulation studies confirm the effectiveness of our estimation procedure. {In the second part of the paper,} we present a data example where the method is applied to bivariate event-streams in financial limit-order-book data. We fit a bivariate Hawkes model on the joint process of limit and market order arrivals. The analysis exhibits a remarkably asymmetric relation between the two component processes: incoming market orders excite the limit-order flow heavily whereas the market-order flow is hardly affected by incoming limit orders. {For the estimated excitement-functions, we observe power-law shapes, inhibitory effects for lags under $0.003\sec$, second periodicities, and local maxima at $0.01\sec$, $0.1\sec$, and $0.5\sec$.}
Keywords: Hawkes process; estimation; integer-valued autoregressive time series; contagion model; intraday financial econometrics
\end{abstract}

\section{Introduction}\label{Introduction}
In this paper, we introduce a nonparametric estimation procedure for the multivariate Hawkes point process; see Definition~\ref{estimator} for the formal definition and Figure \ref{fig2} for an illustrative summary of the main results. The Hawkes process is a model for event streams. Its alternative name, `selfexciting point process', stems from the fact that any event has the potential to generate new events in the future. Our estimator gives substantial information on this excitement: nonmonotonicities or regime switches in the excitement of the fitted Hawkes model can be detected; the estimates may also help with the choice of parametric excitement-functions. The asymptotic distribution of the estimator can be derived so that confidence bounds are at hand. Also note that the presented estimation method is numerically less problematic than the standard likelihood-approach.
%; our new approach is simple to implement and fast.
Last but not least, the figures generated from the estimation results are a graphical tool for representing large univariate and multivariate event data sets in a compact and at the same time informative way. In particular, the estimation results can be interpreted as measures for interaction and stability 
of empirical event-streams. This will be highlighted in the data example at the end of the paper
 where we apply the estimation procedure to the order arrival times in an electronic market. 
\par The Hawkes process was introduced in \citet{hawkes71a, hawkes71b} as a model for event data from contagious processes. 
Theoretical cornerstones of the model are
 \citet{
 hawkes74},
 \citet{
bremaud96,
bremaud01},
\citet{liniger09}
and
\citet{errais10}.
For a textbook reference that covers many aspects of the Hawkes process; see \citet{daley03}. The main theoretical reference for the following presentation is our own contribution \citet{kirchner16c}, where we show that Hawkes processes can be approximated by certain discrete-time models. 
\par By the omnipresence of `event'-type data, the Hawkes process has become a popular model in many different contexts such as geology, e.g., earthquake modeling in \citet{ogata88}, internet traffic, e.g., youTube clicks in \citet{crane08}, biology, e.g., genome analysis in \citet{reynaud10}, sociology, e.g., crime data in \citet{mohler11}, or medicine, e.g., virus spreading in \citet{kim11}. A most active  area of scientific activity today is financial econometrics with applications of Hawkes processes to the modeling of credit defaults in \citet{errais10}, extreme daily returns in \citet{liniger11}, market contagion in \citet{sahalia11} and numerous applications to limit-order-book modeling such as high-frequency price jumps in \citet{bacry11b} and \citet{chavez12}, order arrivals in \citet{bacry11a}, or joint models for orders and prices on a market microstructure level in \citet{muzy13}. Early publications applying the Hawkes model in the financial context are \citet{bowsher02}, \citet{chavez05} and \citet{mcneil05}. 
\par The paper is organized as follows: Section 2 explains how Hawkes processes can be approximated by specific integer-valued time series and how this approximation yields an estimation procedure. Section 3 defines the new Hawkes estimator formally, discusses its properties, {and compares it to alternative estimation methods}. Section 4 refines the procedure by giving methods for a reasonable choice of the estimation parameters.
%as well as methods for bias correction. 
Section 5 presents the data example where the ideas of the paper are applied to the analysis of intraday financial data. The last section concludes with a discussion on the implications of the presented results. 
Appendix \ref{proofs} contains proofs. Large parts of the paper are accompanied by examples with simulated data: in favor of a linear reading flow, we directly illustrate all new concepts with such examples---instead of devoting a separate section to simulations.

\begin{figure}
\includegraphics[width = \textwidth]{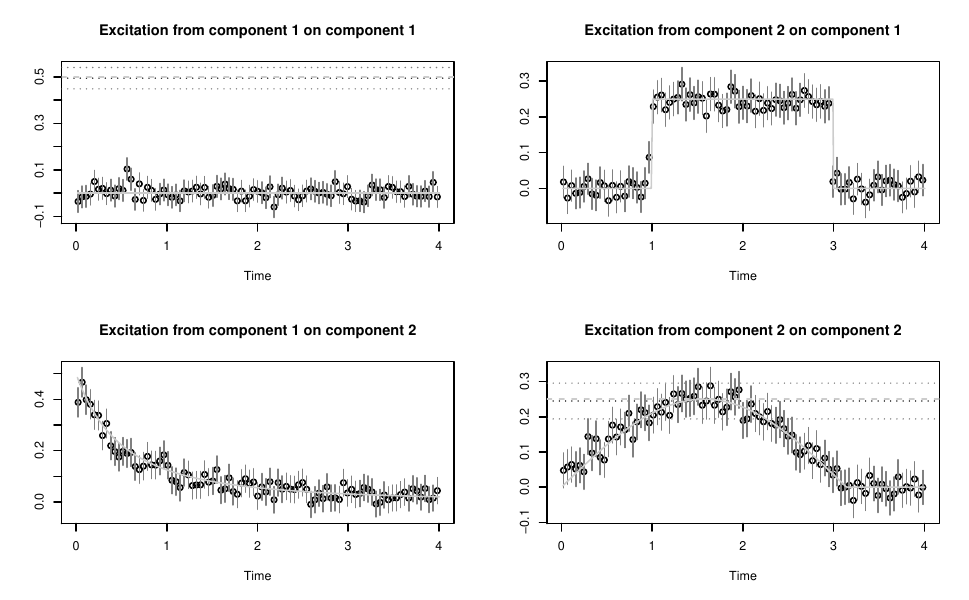}
\caption{Summary of the main result of the paper. From the bivariate Hawkes model presented in Figure~\ref{fig1}, around 100\,000 events in each component are simulated. From this single large sample, we calculate the estimator from Definition~\ref{estimator}. The black circles refer to estimated values of the excitement functions. The horizontal black dotted lines in the diagonal panels refer to the corresponding estimated baseline-intensity components. The vertical grey lines as well as the dotted horizontal grey lines  refer to marginal 95\%-confidence intervals; see Remark~\ref{cov}. All solid and dashed lightish-grey lines refer to the true underlying parameters; compare with Figure~\ref{fig1}. Eyeball examination shows that the estimation method approximates the form of the true excitement functions well. Also the
nonmonotonicities and the jumps are reproduced. The coverage rates of the confidence intervals seem just about right. There is no obvious bias. For a more quantitative analysis of the estimation method; see Section~\ref{simulation_study} and Figure~\ref{fig15}.}
\label{fig2}
\end{figure}

\section{Approximation of Hawkes processes}\label{The_Method}
In this section, after defining the Hawkes process
% and discussing its standard parametrization and estimation, 
we introduce autoregressive integer-valued time series. 
%We show that these time series are very similar to the Hawkes process. 
We clarify how this model approximates the Hawkes model and how this approximation yields an estimation procedure. 

\subsection{The Hawkes process}\label{Hawkes_Process}
From a geometric point of view, a (univariate) Hawkes process specifies a distribution of points on a line. Typically, the line is interpreted as `time' and the points as `events'. Selfexciting point process is the common alternative name for the Hawkes process. It highlights the basic idea of the model: given an event, the intensity---the expected number of events in one time unit---shoots up (`selfexcites') and then decays (`forgets its past gradually'). The shape of this decay is specified by a function, namely the excitement function.
%Note that as a consequence, the parameter space of the model is a priori infinite dimensional. 
%Figure \ref{fig0} illustrates the concept of selfexcitement.
The definition and the proof of existence of a Hawkes process are subtle matters. For rigorous theoretical foundation, we refer to \citet{liniger09}, Chapter 6. We assume a basic underlying probability space $\left(\Omega, \P, \mathcal{F}\right)$, complete and rich enough to carry all random variables involved. On this probability space, we define stochastic point-sets $\mathcal{P}\subset \R$ of the form $\mathcal{P}=\{\dots, T_{-1}, T_0, T_1, \dots\}$ with $T_k\leq T_{k+1}$, $k \in\Z$, having almost surely no limit points. Furthermore, we assume that the $\sigma$-algebras 
$$
\mathcal{H}_t^{\mathcal{P}} :=\sigma\Bigg(\bigg\{\omega\in\Omega:\#\left(\mathcal{P(\omega)}\cap(a,b]\right) = n:\; n\in\mathbb{N}_0, \; a < b \leq t\bigg\}\Bigg),\quad t\in\R,
$$
 are subsets of $\mathcal{F}$. By setting 
$$
N_{\mathcal{P}}(A):=\#\left(\mathcal{P}\cap A\right),\quad A\in\mathcal{B}(\R),
$$
any stochastic point-set $\mathcal{P}$ defines a random measure $N_{\mathcal{P}}$ on $\mathcal{B}(\R)$, the Borel sets of $\R$. 
%Note that by our assumptions on $\mathcal{F}$, respectively, $\big(\mathcal{H}_t^\mathcal{P}\big)$, we have that $N_{\mathcal{P}}(A):\, \left(\Omega ,\mathcal{F}\right)\to \N_0\cup\,\{\infty\},$ are well-defined random variables for all $A\in\mathcal{B}(\R)$. 
At this point, we drop the $\mathcal{P}$ index; the set $\mathcal{P}$ is completely specified by $N:=N_{\mathcal{P}}$.
In this paper, we call a random measure $N$ of this kind \emph{point process} and we call the filtration $\left(\mathcal{H}_t^{N}\right):=\left(\mathcal{H}_t^{\mathcal{P}}\right)$ \emph{history} of the point process.     
%We write $N(a,b):= N\big((a,b]\big)$ for $ a<b$.
  The \emph{conditional intensity} of a point process $N$ is
\begin{align}
\Lambda_{N}(t) &:=\lim\limits_{\delta\downarrow 0}\frac{\E\Big[N\left((t, t+\delta] \right)|\mathcal{H}_{t}^N\Big]}{\delta},\quad t\in\R. \label{conditional_intensity}
\end{align}
%For a point process $N$ and $-\infty\leq a<b <\infty$, we use the notation
%$
%\int_a^b g(t)N\left(\mathrm{d}t\right):=\sum_{T_i \in (a,b]\cap \mathcal{P}}g(T_i),
%$
%where $g$ is any $\R$-valued measurable function such that the sum is almost surely well defined. 
%A point process $N$ is called \emph{stationary} if the random variables $N\left(A\right)$ and $N\left(A-s\right)$ are identically distributed for all $s\in\R$ and all $A\in\mathcal{B}(\R)$, where $A-s:=\{t\in\R:\, t+s \in A\}$.  

\begin{figure}
\begin{center}
\subfigure[Model parameters of a bivariate Hawkes process. The solid lines refer to the excitement function $H=(h_{i,j})$. It consists of the two selfexcitement functions $h_{1,1}(t) \equiv 0$ and $ h_{2,2}(t) = 1_{t\leq \pi} 0.25 \sin(t)$ as well as the two crossexcitement functions $h_{1,2}(t) \equiv 1_{1<t\leq 3} 0.25 $ and $h_{2,1}(t) = 0.5(1+t)^{-2}$.
The dashed lines in the diagonal panels refer to the two components of the baseline intensity $\bfeta = (0.5, 0.25)$. The functions are chosen quite extreme for the sake of demonstration of the estimation method; see Figure~\ref{fig2}.]{\includegraphics[width = 0.9\textwidth]{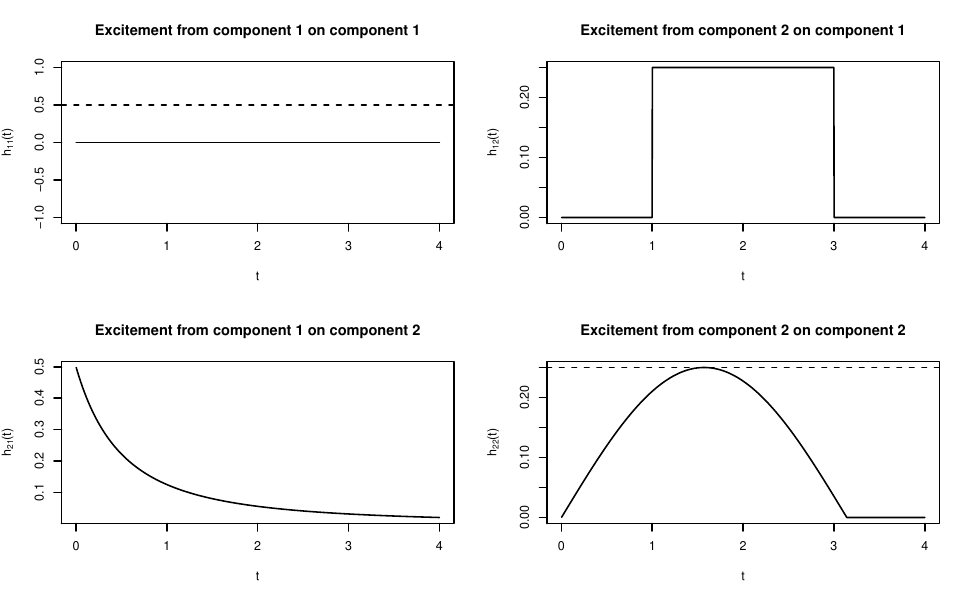}}
\subfigure[A realization of the two components of the process starting at time 0. The vertical lines refer to the events, the greyish solid lines refer to the realized conditional-intensity components, and the dashed lines refer to the baseline-intensity components. The crossexcitement from component 1 on component 2 and also the delayed rectangle impuls impact from component 2 on component 1 are particularly visible.]{\includegraphics[width = 0.9\textwidth]{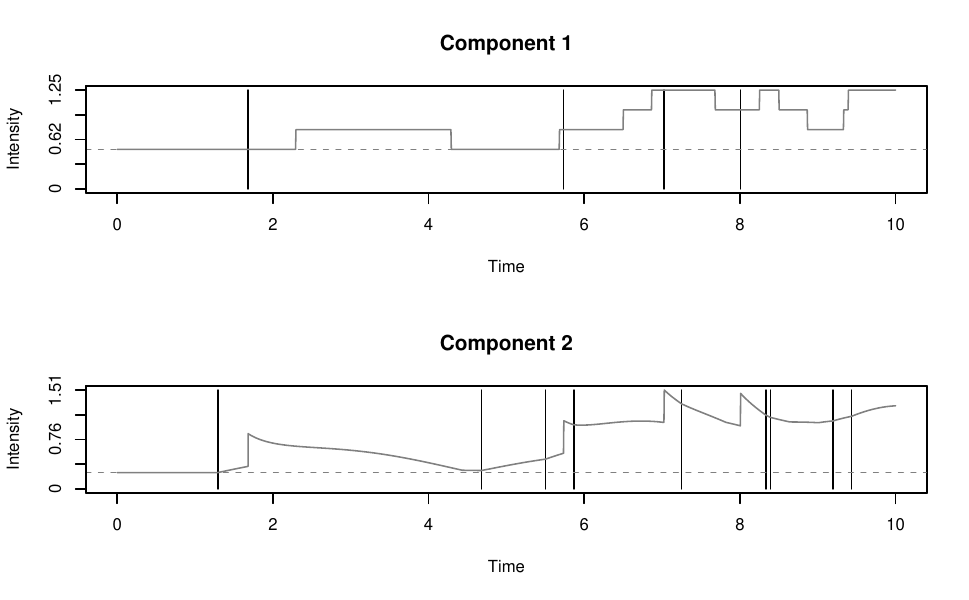}}
\caption{Illustration of a bivariate Hawkes process as described in Section~\ref{Hawkes_Process}. The upper panel shows the model parameters. The lower panel shows a realization.}
\label{fig1}
\end{center}
\end{figure}

A \emph{Hawkes process} is a stationary point process $N$ with conditional intensity
\begin{align}
\Lambda_N(t)&=\eta+\int\limits_{-\infty}^t h(t-s)  N\left(\mathrm{d}s\right),\quad t\in\R.\label{Hawkes_intensity}
\end{align}
The constant $\eta\geq 0$ is called \emph{baseline intensity}, and the function
$h: \R_{\geq0}  \rightarrow \R_{\geq 0}$,  
measurable, is called \emph{excitement function}. Existence-conditions are discussed below. 
\par For $d\in\N$, a $d$\emph{-variate Hawkes process} $\mathbf{N} $ is a process with $d$ point processes on $\R$ as components, i.e.,
$
\mathbf{N} = \left(N^{(1)},\dots ,N^{(d)}\right)^\top.
$
Each component process counts points from random point-sets $\mathcal{P}_1\subset\R,\dots,\mathcal{P}_d\subset\R$. In this multivariate setup, the counting processes $N^{(k)},\,k=1,\dots ,d,$ do not only selfexcite but in general also interact with each other (`crossexcite'). The baseline intensity $\bfeta$ is a $d$-variate vector in $\R_{\geq 0}^d$ and the excitement function is a measurable $d\times d$ matrix-valued function $H=\left(h_{i,j}\right)_{1\leq i,j\leq d}: \R_{\geq0}  \rightarrow \R_{\geq 0}^{d\times d}$. The \emph{conditional intensity of a $d$-variate Hawkes process} is $ \R_{\geq0}^d$-valued with
\begin{align}
\mathbf{\Lambda}_{\bfN}(t)&:=\lim\limits_{\delta\downarrow 0} \frac{\E\Big[\mathbf{N}((t, t+\delta])|\mathcal{H}_{t}^{\bfN}\Big]}{\delta}=\bfeta+\int\limits_{-\infty}^{t}H(t-s) \mathbf{N}\left(\mathrm{d}s\right),
\quad t\in\R, \label{multivariate_intensity}
\end{align}
where, for $i= 1,\dots, d$,
\begin{equation}
\left(\,\int\limits_{-\infty}^{t}H(t-s) \mathbf{N}\left(\mathrm{d}s\right)\right)_i := \left(\sum\limits_{j = 1}^d \int\limits_{-\infty}^{t}h_{i,j}(t-s) N^{(j)}\left(\mathrm{d}s\right)\right)_i
\end{equation}
and
$\mathcal{H}_{t}^{\mathcal{\bfN}} :=\sigma\Bigg(\bigg\{\omega\in\Omega:\, \bfN\big((a,b]\big) = \mathbf{n}\bigg\},\,\mathbf{n}\in\N_0^d,\ a<b\leq t\Bigg).
$
In other words, the entry $h_{i,j}(t)$ of the matrix $H(t)$ denotes the effect of any event $T^{(j)}_k\in\mathcal{P}_j$ in component $j$ on the intensity of component $i$ at time $T^{(j)}_k+t$. See Figure~\ref{fig1} for an example of a bivariate Hawkes process.
In \citet{hawkes71b}, we find the following {sufficient condition for existence}: if
\begin{align}
\mathrm{spr}(K):=\max\Big\{|k|: k \text{ eigenvalue of matrix }K\Big\} < 1,\label{stability_criterion}
\end{align}
where $K :=\left(\int\limits_0^\infty h_{i,j}(t) \mathrm{d}t\right)_{1\leq i,j\leq d}$,
then a process with conditional intensity as in \eqref{multivariate_intensity} exists. 
%We refer to  condition \eqref{stability_criterion} as \emph{stability criterion}.
The matrix $K$ in \eqref{stability_criterion} is sometimes referred to as \emph{branching matrix} and the entries of $K$ as \emph{branching coefficients}. These terms reflect an alternative view on the process as a special cluster process \citep{hawkes74}:
\par In each of the components of a $d$-variate Hawkes process, we observe cluster centers that stem from independent homogeneous Poisson processes with rates $\eta_1, \dots , \eta_d$. These cluster centers are also called \emph{immigrants} or \emph{exogenous events}. Such an immigrant $I^{(j)}\in\R$ in component $j$ triggers $d$ inhomogeneous Poisson processes in components $i=1,\dots ,d$ with intensities $h_{i,j}\left(\cdot - I^{(j)}\right),\, i=1,\dots,d$. And each of these new points again produces $d$ inhomogeneous Poisson processes in a similar way, so that the clusters are built up as a cascade of inhomogeneous Poisson processes. The non-immigrant events are called \emph{offspring} or \emph{endogenous events}. Disregarding the time component and only considering this immigrant--offspring structure, one actually has a branching process with immigration, where the number of direct offspring in component $i$ from an event in component $j$ is Pois\big($K_{ij}$\big) distributed. 
%This explains the terms branching matrix and branching coefficients. However, this branching structure is---as a rule---unobservable in applications. That makes estimation of Hawkes processes a difficult task.

\subsection{Parametrization and estimation of Hawkes processes}
In most cases, the data analyst's choice of the excitement function $H$ of a Hawkes process is a somewhat arbitrary parametric function---the main decision being between exponential functions or power-law functions. The function parameters are then estimated via standard likelihood maximization. Power-law decay of the excitement functions often turns out to be more `realistic' in applications; exponential decay yields a likelihood that is numerically easier to handle by recursive representation; see \citet{ogata88}. In addition, exponential excitement functions are mathematically attractive because they yield a Markovian structure for the conditional intensity; see \citet{errais10}.
Even if the choice between exponential and power-law decay is handled carefully, these two functional families cannot catch regime switches or nonmonotonicities of excitement functions as in Figure~\ref{fig1}. So it seems important to develop methods that can identify shapes of excitement in data with less stringent assumptions. Another motivation for our research on estimation of the Hawkes model stems from numerical issues---especially encountered in the multivariate case.
 A third gap that we aim to close with our paper is the derivation of the asymptotic distribution of the estimates. 
\par {
%:
Alternative estimation methods for the Hawkes process have been introduced in \citet{lewis11}, \citet{lemonnier14} , \citet{reynaud14}, \citet{alfonsi15}, and \citet{hansen15}. In particular, the method developed in \citet{bacry11b, bacry14a}, and \citet{bacry15b} is similar to ours and can be interpreted in our approximation framework. We will discuss these alternative estimation approaches in Section \ref{alternative_estimation_methods}.}

\subsection{Intuition of the approximation}\label{intuition}
The main idea is simple: given a (possibly multivariate) Hawkes process, we divide the time line into bins of size $\Delta>0$ and count the number of events in each bin (for each component). These `bin counts' form an $\N_0$-valued stochastic sequence ($\N_0^d$-valued in the $d$-variate case). The distribution of this sequence can be approximated by a well-known time series model. We present the heuristics behind the approximation in the case of a univariate Hawkes process $N$ with baseline intensity $\eta>0$ and excitement function $h$ with $\int h\mathrm{d}t <1$. For some $\Delta>0$, we define the bin-counts $
\tilde{X}^{(\Delta)}_{n}:
=N\big(((n-1)\Delta , n\Delta]\big),\, n\in\Z .
$
We want to argue that for small $\Delta >0$ and large $p\in\N$, we have that
\begin{align}
\E\left[\tilde{X}^{(\Delta)}_{n}\Big|\sigma\left(\tilde{X}^{(\Delta)}_{n-1}, \tilde{X}^{(\Delta)}_{n-2}, \dots\right)\right] \approx \Delta\eta +  \sum\limits_{k=1}^p \Delta h(\Delta k)\tilde{X}^{(\Delta)}_{n-k},\quad n\in\Z.\label{approx_bin_count_eq}
\end{align}
We divide the approximation above in three separate approximation-steps:
 \begin{eqnarray}
\E\left[\tilde{X}^{(\Delta)}_{n}|\mathcal{H}^N_{(n-1)\Delta}\right] \Bigg(&\stackrel{\eqref{conditional_intensity}}{=}& \int\limits_{(n-1)\Delta}^{n\Delta}\E\left[\Lambda(t)|\mathcal{H}^N_{(n-1)\Delta}\right]\mathrm{d}t\Bigg)\nonumber\\
&\stackrel{\eqref{Hawkes_intensity}}{\approx}& \Delta\eta + \Delta\int\limits_{-\infty}^{(n-1)\Delta}h(n\Delta-u) N\left(\mathrm{d}u\right)\label{distributional_error}
\\
&\approx& \Delta\eta + \Delta\int\limits_{(n-p-1)\Delta}^{(n-1)\Delta}h(n\Delta-u) N\left(\mathrm{d}u\right)\label{cutoff_error}\\
&\approx& \Delta\eta + \sum\limits_{k=1}^p \Delta h(\Delta k)\tilde{X}^{(\Delta)}_{n-k},\quad n\in\Z.\label{discretization_error}
\end{eqnarray}
The estimator we are about to present ignores the three approximations above and treats them as equalities. In doing so, we make a distributional error~\eqref{distributional_error}, a cut-off error~\eqref{cutoff_error}, and a discretization error~\eqref{discretization_error}.
{The term \emph{distributional error} might demand further explanation: in~\eqref{distributional_error}, we treat the conditional intensity $\Lambda$ as constant over $((n-1)\Delta ,n\Delta]$. This is not true in general as $h$ is typically not (piecewise) constant. In addition---and more importantly---\eqref{distributional_error} ignores the influence of possible events in the bin $((n-1)\Delta , n\Delta]$ on $\Lambda$. As an example, suppose we observe two events in a bin. In the original Hawkes model, the second of these events may very well be a result of the first event. But in the approximating model, we ignore this possibility and explain both of these events by events in earlier bins or by the constant term. 
% As a compensation, in the approximating model we typically overestimate the influence of the past bins on the present bin and/or the constant term, that is, the baseline intensity.
}
\par There is an integer-valued time series that solves the approximative bin-count equation \eqref{approx_bin_count_eq} to the point: the integer-valued autoregressive model of order $p\in\N$, the INAR($p$) model. {The three different approximation errors \eqref{distributional_error},~\eqref{cutoff_error}, and~\eqref{discretization_error}, contribute to the bias of our estimation method in different ways. We discuss these effects in Sections~\ref{choice_of_support} and~\ref{choice_of_bin_size}.}

\subsection{The INAR($p$) model}
The INAR($p$) process was first proposed by \citet{li91} as a time series model for count data. For the history and an exhaustive collection of properties of the model; see \citet{marques05}. For a textbook reference; see \citet{fokianos01}. The main idea of the construction is to manipulate the standard system of autoregressive difference-equations `$X_ n -\sum \alpha_k X_{n-k}=   \varepsilon_n,\,n\in\Z$' in such a way that its solution $\left(X_n\right)$ is integer valued. This is achieved by giving the error terms a distribution supported on $\N_0$ and substituting all multiplications with independent $\N_0$-valued operations. The following notation borrowed from \citet{steutel79} makes the analogy particularly obvious.
\begin{definition}\label{reproduction_operator}
For an $\N_0$-valued random variable $Y$ and a constant $\alpha\geq 0$ define the \emph{reproduction operator} $\circ$ by
$$
\alpha \circ Y:=\sum\limits_{k=1}^Y\xi^{(\alpha)}_k,
$$ where $\xi^{(\alpha)}_1, \xi^{(\alpha)}_2 ,\dots $ are i.i.d.\ and independent of $Y$ with $\xi^{(\alpha)}_1\sim \mathrm{Poisson}(\alpha)$. We use the convention that $\sum_{k=1}^0\xi^{(\alpha)}_k = 0$. 
\end{definition}
We immediately present the multivariate version of the reproduction operator and the 
 multivariate version of the INAR($p$):
 \begin{definition}\label{mv_reproduction_operator}
For a $d\times d$ matrix $A=\left(\alpha_{i,j}\right)_{1\leq i,j\leq d} \in\R_{\geq 0}^{d\times d}$ and an $\mathbb{N}^d$-valued random variable $\bfX = \left(X_1,X_2,\dots,X_d\right)^\top$, define the \emph{multivariate reproduction operator} $\circledast$ by
$$
A\circledast \bfX := \left(\begin{array}{c}\sum\limits_{j=1}^d \alpha_{1,j}\circ X_j \\\dots   \\ \sum\limits_{j=1}^d \alpha_{d,j}\circ X_j  \end{array} \right),
$$
where the reproductions $(\alpha_{i,j}\circ \cdot \,)$ 
%as defined in Definition~\ref{reproduction_operator} 
operate independently over $1\leq i,j\leq d$.
\end{definition}

\begin{definition} \label{mvINAR(p)}
Let $d, p \in \N$, $A_k \in \R_{\geq 0}^{d\times d}$, $k=1,\dots,p,$ $\bfa_0\in  \R_{\geq 0}^{d}$, and $\left(\bfepsilon_n\right)_{n\in\Z}$ an i.i.d.\ sequence of vectors in $\N_0^d$ with mutually independent components $\bfepsilon_{0,i}\sim \operatorname{Pois}(\bfa_{0,i})$, $i= 1,\dots ,d$. A $d$\emph{-variate INAR($p$) sequence} is a stationary sequence $\left(\bfX_n\right)_{n\in\Z}$ of $\N_0^d$-valued random vectors; it is a solution to the system of stochastic difference-equations
$$
\bfX_n=\sum\limits_{k=1}^pA_k\circledast \bfX_{n-k} + \bfepsilon_n,\quad n\in\Z,
$$ 
where the `\,$\circledast$' operate independently over $k$ and $n$ and also independently of $\left(\bfepsilon_n\right)$. We refer to $\bfa_0$ as \emph{immigration-parameter vector} and to $A_k,\, k=1,2,\dots,p,$ as \emph{reproduction-coefficient matrices}.
\end{definition}
This model has first been considered in~\citet{latour97}. In the same paper we find that
% Note that with $\alpha$ and $Y$ defined as above, we have that $\E\left[\alpha\circ Y\right] = \alpha\E\left[ Y\right] $.
if all zeros of
\begin{align}
z \mapsto \det\left(z1_{d\times d}-\sum\limits_{k=1}^pA_k\right), \quad z\in\mathbb{C}, \label{INAR_stability_criterion}
\end{align}
lie inside the unit circle, then a multivariate INAR($p$) process as in Definition~\ref{mvINAR(p)} exists. 
%Note that this criterion is equivalent to saying that the moduli of the eigenvalues of the matrix $K:=\sum_{k=1}^pA_k$ are all strictly less than 1. 
%With the notation from the univariate Definition~\ref{INAR}, the criterion simply reads $\sum_{k=1}^p\alpha_k<1$.

%\begin{definition}\label{INAR}
%Let $p\in\N$, $ \alpha_k \geq 0,\ k = 0,1,2,\dots ,p,$ and $\varepsilon_n{\sim}$Pois($\alpha_0$), $n\in\Z,$ i.i.d. An \emph{integer-valued autoregressive model of order $p$} (INAR($p$)) is a stationary sequence of integer-valued random variables $(X_n)_{n\in\mathbb{Z}}$; it is a solution to the system of stochastic difference-equations
%$$
%X_n -  \sum\limits_{k=1}^p \alpha_k \circ X_{n-k} =  \varepsilon_n,\quad n\in\mathbb{Z},
%$$
%where each `\ $\circ$' operates independently over $k$ and $n$ and also independently of $
%(\varepsilon_n)$. We refer to $ \alpha_0$ as \emph{immigration parameter} and to  $\alpha_k ,\, k = 1,2,\dots ,p,$ as  \emph{reproduction coefficients}.
%\end{definition}
%Existence of INAR($p$) will be discussed below.
Consider a univariate INAR($p$) sequence $(X_n)$ with immigration parameter $\alpha_0$ and reproduction coefficients $\alpha_k,\, k = 1,\dots,p$.  Note that the criterion from above now simply reads $\sum_{k=1}^p\alpha_k <1$. Under this condition, we have that
$ 
X_{n} |X_{n-1},X_{n-2} ,\dots\strut\sim \text{Pois}\left( \alpha_0 + \sum_{k=1}^p \alpha_k X_{n-k}\right)$. In particular, 
$
\E[X_n|\sigma( X_{n-1}, X_{n-2},\dots)] =\alpha_0 +  \sum_{k=1}^p \alpha_k X_{n-k}
$---which is the exact version of \eqref{approx_bin_count_eq}. 
The INAR($p$) sequence has a similar immigrant--offspring structure as the Hawkes process. In the time series case, the (possibly multiple) immigrants at each time step stem from i.i.d.\ Pois($\alpha_0$) variables. Each of these immigrants produces Pois($\alpha_k$) new offspring events at $k$ time steps later. Each of these offspring events again serves as parent event for new offspring etc. \\
\par A more obvious choice for the distribution of the counting sequences in Definition \ref{reproduction_operator} would be Bernoulli---yielding the original~\emph{thinning operation} from \citet{steutel79}. Note, however, that
for small reproduction coefficients, the Poisson and the Bernoulli approaches are very similar. Also note that the Poisson distribution is more convenient for our purpose: 
we want to interpret the INAR($p$) model as an approximation of the bin-count sequence of a Hawkes process and in the Hawkes model, an event can have potentially more than one direct offspring event in a future time-interval. In addition, in the Poisson case, we do not have to exclude reproduction coefficients larger than one. 
%\item[(ii)] The Poisson case yields more compact variance formulas. Namely for any deterministic $y\in \N$ we have $\alpha \circ y\sim\mathrm{Pois}(\alpha y)$ and therefore $\E\left[\alpha\circ y \right] = \Var\left(\alpha\circ y\right) = \alpha y$. 
%
%\end{itemize}
%By this non-standard implementation of the reproduction operation, our INAR($p$) definition is slightly different from the one in \citet{li91}. However, Definition~\ref{INAR} can be seen as a special case of the generalized INAR($p$) model as defined in \citet{marques05}.
%\end{remark}
 
%Like the Hawkes process, the INAR($p$) model also has its multivariate version. This has first been considered in \citet{latour97}.

\subsection{Approximation of the Hawkes process by the INAR($p$) model} \label{connection}
We examine the close relation between Hawkes point processes and INAR time series in \citet{kirchner16c}. 
%The core of this correspondence lies in the similar immigrant--offspring branching construction. 
%This similar branching structure yields analogous generating functions and therefore analogous distributional properties. 
For a particularly obvious parallel, the reader may consider the analogy of the existence criteria \eqref{stability_criterion} and \eqref{INAR_stability_criterion}.
Our cited paper gives a precise convergence statement for the univariate case. After establishing existence and uniqueness of the INAR($\infty$) process as a generalization of Definition~\ref{mvINAR(p)} with $d=1$ and $p=\infty$, we prove
\begin{theorem}\label{convergence}
Let $N$ be a univariate Hawkes process with baseline intensity $\eta>0$ and piecewise-continuous excitement function $h:\R_{\geq 0}\to \R_{\geq 0}$ such that $\sum_{k =1}^\infty h\left(k\Delta\right)\Delta < 1$ for all $\Delta \in (0,1)$. Furthermore, let $(X^{(\Delta)}_n)$ be a univariate INAR($\infty$) sequence with immigration parameter $\alpha^{(\Delta)}_0:=\Delta \eta$ and reproduction coefficients 
$\alpha^{(\Delta)}_k := \Delta h\left(k\Delta\right), \,k\in\N$,
and define a family of point processes by
$$
N^{(\Delta)}\big((a,b]\big):= \sum_{n:\, n\Delta \in (a, b]}X^{(\Delta)}_n,\quad a < b,\, \Delta\in (0,1).
$$
Then we have that, for $\Delta\downarrow 0$, the INAR($\infty$)-based family of point processes
$
 \left(N^{(\Delta)}\right)$ converges weakly to the Hawkes process $N$.
\end{theorem}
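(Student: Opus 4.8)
\emph{Proof proposal.} The plan is to exploit the immigrant--offspring (Poisson-cluster) structure shared by $N$ and $N^{(\Delta)}$ and to obtain the weak convergence through convergence of Laplace functionals.

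First I would write down the two cluster representations. By the cluster representation of \citet{hawkes74} --- available because the assumed existence of the stationary Hawkes process forces $\|h\|_1:=\int_0^\infty h\,\d t<1$ --- one has $N=\sum_i C_{T_i}$, where $\{T_i\}$ is a homogeneous Poisson process of rate $\eta$ and, conditionally on it, the $C_{T_i}$ are independent, each distributed like the random point set $C_t$ obtained by placing one ancestor at $t$ and recursively letting every point $x$ spawn an independent Poisson process of intensity $s\mapsto h(s-x)$ on $(x,\infty)$. The INAR($\infty$) sequence has the analogous structure on the grid $\Delta\Z$: the immigrants at a site $n\Delta$ are $\mathrm{Pois}(\Delta\eta)$, every event at $m\Delta$ independently produces $\mathrm{Pois}(\Delta h(k\Delta))$ offspring at $(m+k)\Delta$ for each $k\ge1$, recursively; reading the resulting grid clusters as point masses on $\R$ exhibits $N^{(\Delta)}$ as a Poisson superposition of clusters, the cluster at site $n\Delta$ being distributed as the single-ancestor grid cluster $C^{(\Delta)}_{n\Delta}$.

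Next I would reduce the statement to the convergence of one cluster's Laplace functional. By Kallenberg's theorem on Laplace-functional convergence of random measures, and since $N$ is itself a bona fide point process, it suffices to show $L_{N^{(\Delta)}}(f)\to L_N(f)$ for every $f\in C_c^+(\R)$, where $L_\mu(f):=\E[\exp(-\int f\,\d\mu)]$. The Poisson-cluster form supplies the explicit expressions $L_N(f)=\exp\!\big(-\eta\int_\R(1-g_t(f))\,\d t\big)$ and $L_{N^{(\Delta)}}(f)=\exp\!\big(-\eta\Delta\sum_{n\in\Z}(1-g^{(\Delta)}_n(f))\big)$, with $g_t(f):=\E[\exp(-\int f\,\d C_t)]$ and $g^{(\Delta)}_n(f):=\E[\exp(-\int f\,\d C^{(\Delta)}_{n\Delta})]$. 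Thus I would need: (i) the pointwise convergence $g^{(\Delta)}_{\lfloor t/\Delta\rfloor}(f)\to g_t(f)$ for a.e.\ $t$; and (ii) a dominating bound turning the Riemann sum $\Delta\sum_n(1-g^{(\Delta)}_n(f))$ into $\int_\R(1-g_t(f))\,\d t$. For (ii) I would use $1-g^{(\Delta)}_n(f)\le\|f\|_\infty\,\E[C^{(\Delta)}_{n\Delta}(\mathrm{supp}\,f)]$ and bound the expected cluster occupation by summing the geometric series of branching means. For (i) I would induct on generations: decomposing $C_t=\bigsqcup_{g\ge0}G_g(t)$ and $C^{(\Delta)}$ likewise, the one-generation mechanism ``point at $x\mapsto$ its offspring'' is, in the INAR model, a Poisson process with atomic intensity $\sum_k\Delta h(k\Delta)\,\delta_{x+k\Delta}$, which converges weakly (uniformly for $x$ in compact sets) to the Poisson process of intensity $h(\cdot-x)\,\d s$ because $h$ is piecewise continuous, so its Riemann sums converge over intervals whose endpoints avoid the Lebesgue-null discontinuity set; composing finitely many such steps handles the cluster truncated at generation $M$, and a subcritical-branching tail bound uniform in $\Delta$ together with an $\varepsilon/3$ argument removes the truncation. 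Assembling (i), (ii) and the two displayed formulas gives $L_{N^{(\Delta)}}(f)\to L_N(f)$.

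I expect the main obstacle to be the \textbf{uniformity in $\Delta$} underlying both (ii) and the truncation step in (i): each requires the per-generation branching means to stay bounded away from $1$ \emph{simultaneously for all small $\Delta$}, which is exactly the point at which the hypothesis $\sum_{k\ge1}\Delta h(k\Delta)<1$ must be combined with the (implicit) standing assumption $\|h\|_1<1$ to produce a $\theta<1$ with $\sum_{k\ge1}\Delta h(k\Delta)\le\theta$ for all $\Delta<\Delta_0$. The remaining points --- Riemann-sum convergence of the offspring intensities via piecewise continuity of $h$, the harmless discrepancy that clusters live on $\Delta\Z$ while their limits live on $\R$, and $\lfloor t/\Delta\rfloor\Delta\to t$ --- I expect to be routine.
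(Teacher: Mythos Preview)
The paper does not actually prove this theorem: its entire proof reads ``This is Theorem 3 in \citet{kirchner15a}.'' There is therefore no in-paper argument to compare your proposal against.

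On its own merits, your route via the Hawkes--Oakes cluster representation and Laplace functionals is the natural one and the sketch is coherent. The reduction to single-cluster Laplace functionals via the Poisson-superposition formula, the generation-by-generation induction driven by Riemann-sum convergence of the offspring intensities (using piecewise continuity of $h$), and the subcritical tail bound to remove the generation truncation are precisely the expected ingredients. You have also correctly isolated the one nontrivial technical point: the bounds in both (ii) and the truncation step of (i) must be uniform in $\Delta$, which requires $\sup_{\Delta<\Delta_0}\sum_{k\ge1}\Delta h(k\Delta)\le\theta<1$; this does follow from $\int_0^\infty h<1$ together with piecewise continuity (the Riemann sums converge to the integral), so the hypothesis suffices.

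Without access to \citet{kirchner15a} one cannot say whether the original argument coincides with yours, but the cluster/Laplace-functional strategy is standard for weak convergence of branching-type point processes and is a plausible match.
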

\begin{proof}
This is Theorem 2 in \citet{kirchner16c}.
\end{proof}
Note that weak convergence of point processes is equivalent to convergence of the corresponding finite-dimensional distributions; see \citet{daley03}, Theorem 11.1.VII. 
The other theoretical result that is important for our estimation purpose is the fact that INAR($\infty$) processes can be approximated by 
INAR($p$) processes, $p<\infty$:
\begin{proposition}\label{INAR(p)_approximation}
Let $\left(X_n\right)$ be an INAR($\infty$) sequence with immigration parameter $\alpha_0>0$ and reproduction coefficients $\alpha_k\geq0,\, k\in\N$. Furthermore, let $\big(X^{(p)}_n\big)$ be a corresponding INAR($p$) sequence, where the reproduction coefficients are truncated after the $p$-th lag. That is, $\big(X^{(p)}_n\big)$ has immigration parameter $\alpha_0^{(p)}:=\alpha_0$ and reproduction coefficients $\alpha^{(p)}_k:= 1_{\{k\leq p\}}\alpha_k,\; k \in\N.$
Then, for $p\to\infty$, the finite-dimensional distributions of $\big(X^{(p)}_n\big)$ converge to the finite-dimensional distributions of $\big(X^{}_n\big)$.
\end{proposition}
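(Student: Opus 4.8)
The plan is to obtain the finite-dimensional convergence from a single coupling, built on the immigrant--offspring (branching-with-immigration) representation of the INAR($\infty$) process. Under the summability condition $\sum_{k\ge1}\alpha_k<1$ that guarantees existence of $(X_n)$ --- and hence of every $(X^{(p)}_n)$, since $\sum_{k=1}^p\alpha_k<1$ as well --- recall from \citet{kirchner15a} that the stationary INAR($\infty$) sequence admits the following realization: a $\operatorname{Pois}(\alpha_0)$ number of \emph{immigrants} is placed independently at each integer time; every individual sitting at an integer time $m$ (immigrant or offspring) independently spawns a $\operatorname{Pois}(\alpha_k)$ number of direct offspring at time $m+k$ for each $k\ge1$, these offspring families being mutually independent and independent of the immigrant stream; and $X_n$ equals the total number of individuals located at time $n$. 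Subcriticality makes each immigrant's cluster almost surely finite, so every individual is reached from a unique immigrant through a finite lineage. I would fix one such realization on a probability space supporting all the Poisson variables involved.

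On the \emph{same} space I would define $X^{(p)}_n$ by repeating the construction but discarding every direct offspring born at a lag $k>p$ together with its whole subtree; equivalently, retain an individual iff every parent-to-child lag along its lineage back to its immigrant is at most $p$, and let $X^{(p)}_n$ count the retained individuals at time $n$. The within-cluster branching now has offspring means $1_{\{k\le p\}}\alpha_k$, so the retained process is a stationary solution of the INAR($p$) system of Definition~\ref{mvINAR(p)} with $d=1$ and thinning coefficients $\alpha^{(p)}_k=1_{\{k\le p\}}\alpha_k$; by uniqueness of that stationary solution, cf.\ \citet{latour97}, it is a bona fide version of the truncated sequence $(X^{(p)}_n)$ in the statement. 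Since an individual is discarded at level $p+1$ only if it is already discarded at level $p$, this coupling yields $0\le X^{(p)}_n\le X^{(p+1)}_n\le X_n$ for all $n$ and all $p$.

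It remains to strengthen this monotone domination to almost-sure convergence --- in fact to eventual equality. Taking expectations in $X_n=\sum_{k\ge1}\alpha_k\circ X_{n-k}+\varepsilon_n$ and using stationarity together with $\E[\alpha\circ Y]=\alpha\,\E Y$ gives $\E[X_n]=\alpha_0/(1-\sum_{k\ge1}\alpha_k)<\infty$, so $X_n<\infty$ almost surely. Then, for a.e.\ $\omega$, only finitely many individuals sit at time $n$; each comes from an immigrant located at time $n-L$ with $L<\infty$, and along its lineage every lag is at most $L$. Letting $M(\omega)$ be the maximum of these finitely many values $L$, we see that for all $p\ge M(\omega)$ every individual at time $n$ is retained and no new individual appears, whence $X^{(p)}_n(\omega)=X_n(\omega)$. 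For finitely many times $n_1<\dots<n_m$, taking the maximum of the corresponding $M(\omega)$ shows $\big(X^{(p)}_{n_1},\dots,X^{(p)}_{n_m}\big)=\big(X_{n_1},\dots,X_{n_m}\big)$ for all sufficiently large $p$, almost surely. Almost-sure convergence of these coupled versions gives convergence of the finite-dimensional distributions, which is the claim.

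I expect the only genuinely delicate step to be the first one: setting up the cluster representation of the \emph{two-sided stationary} INAR($\infty$) sequence and verifying that lag-$p$ truncation of the branching mechanism reproduces exactly the stationary INAR($p$) law. This leans on the existence-and-uniqueness theory for INAR($\infty$) and INAR($p$) established in \citet{kirchner15a} and \citet{latour97} and on subcriticality $\sum_{k\ge1}\alpha_k<1$, which both secures existence and makes each cluster almost surely finite, so that the ``eventual equality'' in the last paragraph is available. An alternative route, avoiding the cluster picture, is to iterate the defining recursion a fixed number of times so as to write $X_n$ and $X^{(p)}_n$ as the same measurable functional of the (untruncated, resp.\ truncated) coefficient array plus an $L^1$-small remainder, and then let the number of iterations and $p$ tend to infinity; but that is essentially the cluster expansion carried out by hand.
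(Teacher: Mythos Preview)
The paper does not supply an intrinsic proof here; it simply invokes Proposition~3 of \citet{kirchner15a}. Your coupling argument via the immigrant--offspring representation is correct and self-contained: truncating the branching mechanism at lag $p$ on the common probability space produces a shift-invariant process that solves the INAR($p$) recursion, and uniqueness of the stationary law (e.g.\ via the irreducible aperiodic Markov-chain argument that also appears in part~(d) of the proof of Theorem~\ref{inference}) identifies it with $(X^{(p)}_n)$. The ``eventual equality'' step is sound because every individual at time $n$ descends from an immigrant at a finite earlier time, and each lag along the lineage is bounded by the total age $L$ of that lineage, so for $p\ge M(\omega)$ nothing is discarded. This is almost certainly the argument spelled out in the cited reference, so your approach coincides with it in substance even though the present paper omits the details.
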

\begin{proof}
{This can be derived by establishing the convergence of the corresponding moment-generating functions from Proposition 2 in \citet{kirchner16c}.}
\end{proof}
We have not worked out the multivariate versions of Theorem~\ref{convergence} and Proposition~\ref{INAR(p)_approximation} above. However, the simulations presented further down in the paper support the assumption that both results also hold in the multivariate case. 
Under this assumption, we have the following approximation:
\begin{framed}
\label{approximation}
\emph{\centerline{Basic approximation}}\strut
\strut\\
Let $\bfN$ be a $d$-variate Hawkes with baseline-intensity vector $\bfeta$ and excitement function $H$ as in \eqref{multivariate_intensity}. Let $\big(\bfX_n^{(\Delta)}\big)$ be a $d$-variate INAR($\infty$) sequence with immigration-parameter vector $\bfa_0^{(\Delta)}:= \Delta\bfeta$ and reproduction-coefficient matrices  $A_k^{(\Delta)} := \Delta H(k\Delta),\, k\in\N$. Furthermore, for $p\in\N$, let $\big(\bfX_n^{(\Delta,p)}\big)$ be a corresponding INAR($p$) sequence with baseline intensity $\bfa^{(\Delta,p)}_0:=\bfa_0^{(\Delta)} $ and $p$ reproduction-coefficient matrices $A_k^{(\Delta,p)} :=A_k^{(\Delta)},\,k = 1,\dots, p$. Then, for small $\Delta>0$ and large $p\Delta>0$, we have that 
 \begin{align*}
&\Big(\emph{N}\big((0,\Delta]\big),\emph{N}\big((\Delta ,2\Delta]\big),\dots, \emph{N}\big(((m-1)\Delta ,m\Delta] \big)\Big) \\
& \stackrel{d}{\approx}  \Big(\emph{X}^{(\Delta)}_1,\emph{X}^{(\Delta)}_2,\dots, \emph{X}^{(\Delta)}_m\Big)\\
& \stackrel{d}{\approx}  \Big(\emph{X}^{(\Delta,p)}_1,\emph{X}^{(\Delta,p)}_2,\dots, \emph{X}^{(\Delta,p)}_m\Big),\quad m\in\N. 
\end{align*}
If $\text{supp}(H)\subset [0,s]$ for some finite $s>0$, then the second approximation becomes an equality for all $p\geq \lceil s/\Delta\rceil$.
\end{framed}
The approximation summarized in the box above is the key observation for our \emph{estimation procedure}: 
\begin{itemize}
\item[(i)] Choose a small bin-size $\Delta>0$ and calculate the bin-count sequence of the events stemming from the Hawkes process.
\item[(ii)] Choose a large support $s:=p\Delta$ and fit the approximating INAR($p$) model to the bin-count sequence via conditional least-squares.
\item[(iii)]  Interpret the scaled immigration-parameter estimate $\hat{\bfa}_0^{(\Delta,p)}/\Delta$ as the natural candidate for an estimate of $\bfeta$ and, for $k\in\{1,2,\dots,p\}$, interpret
 the scaled reproduction-coefficient matrix estimates  $\hat{A}^{(\Delta,p)}_k/\Delta$ as natural candidates for estimates of $ H\left(k\Delta\right)$. 
\end{itemize}
%Consistency and asymptotic normality of conditional least-squares estimates are standard results in the univariate INAR($p$) context---we prove them for the multivariate case. This yields plausible confidence intervals for the estimates. 
Before giving the formal definition of the estimator in the next section, we illustrate the power of the presented method in Figure~\ref{fig2}. 
%The four excitement functions of the example were chosen quite extreme for the sake of demonstration. 
%The method obviously `works'. 
\section{The estimator}   \label{The Estimator}
In this section, we first discuss estimation of the approximating INAR($p$) process. Then we define our Hawkes estimator formally and collect some of its properties. {We describe alternative estimation methods and compare them with our estimator.} Furthermore, we present results of a multivariate simulation study that support our approach. 
%We conclude the section with a short comparison of our method with an alternative approach from~\citet{bacry11b}.

\subsection{Estimation of the INAR($p$) model}\label{CLS_INAR}
There are several possibilities to estimate the parameters of an INAR($p$) process.
As the margins are conditionally Poisson distributed, in principle, maximum-likelihood estimation (MLE) can be applied. In our context, however, numerical optimization of the likelihood is difficult, as the number of model parameters will typically be 
%from 20 up to 100 or even many more
very large. A method-of-moments type estimator would be the Yule--Walker method (YW). 
A third method is the conditional least-squares estimation (CLS). 
We formulate the estimation in terms of CLS; {see Section~\ref{alternative_estimation_methods} for this choice.} CLS-estimation in the univariate INAR($p$) context has been discussed, e.g., in \citet{li91} and \citet{zhang10}. In both papers, the reasoning is performed along the lines of \citet{klimko78}, which was originally developed for CLS-estimation of time series with the very general structure 
`$
\E\left[X_n|X_{n-1},\dots \right]=g_\theta(X_{n-1},X_{n-2},\dots )
$',
where $g_\theta$ may be nonlinear.
%There are generalizations of these results for the multivariate case in Tj\o stheim (1987) that could in principle be applied to the multivariate INAR($p$) case. 
However, as already noticed in \citet{latour97}, INAR($p$) sequences can be represented as standard AR($p$) models with white noise immigration terms. This yields ways for inference that are more direct.
 
\begin{proposition} \label{white_noise}
Let $\left(\bfX_n\right)$ be a $d$-dimensional INAR($p$) sequence as in Definition~\ref{mvINAR(p)}  with immigration-parameter vector $\bfa_0\in\R_{\geq 0}^d\setminus \{0_d\}$ and reproduction-coefficient matrices $A_k\in\R_{\geq 0}^{d\times d},\,k = 1,2,\dots,p$, such that \eqref{INAR_stability_criterion} holds.  Then 
$$
\bfu_n:= \bfX_n - \bfa_0 - \sum\limits_{k=1}^p A_k\bfX_{n-k},\quad n\in\Z ,
$$
defines a (dependent) white-noise sequence, i.e.,
$\left(\bfu_n\right)$ is stationary, $\E \bfu_n= 0_d,\ n\in\Z$, and 
$$
\E\left[\bfu_n\bfu_{n'}^\top\right]=\begin{cases}  \diag\left(\left(1_{d\times d} -  \sum\limits_{k=1}^pA_k\right)^{-1}\right)
 ,&n=  n',\\
0_{d\times d},& n \neq n'.
\end{cases}
$$
\end{proposition}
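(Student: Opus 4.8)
The plan is to verify the three assertions—stationarity, zero mean, and the covariance structure—by exploiting the explicit INAR$(p)$ recursion together with elementary properties of the thinning operator. First I would record the key moment identities for a single Poisson thinning: if $Y$ is $\N_0$-valued and independent of the i.i.d.\ Poisson$(\alpha)$ family $\xi^{(\alpha)}_k$, then $\E[\alpha\circ Y\mid Y]=\alpha Y$ and $\Var(\alpha\circ Y\mid Y)=\alpha Y$ (since a Poisson$(\alpha)$ summand has mean and variance both equal to $\alpha$), and more generally the conditional law of $\alpha\circ Y$ given $Y$ is Poisson$(\alpha Y)$. In the multivariate case, because the thinnings operate independently over the index pairs, $\E[A_k\circledast\bfX_{n-k}\mid \bfX_{n-k}]=A_k\bfX_{n-k}$ and the conditional covariance matrix of $A_k\circledast\bfX_{n-k}$ given $\bfX_{n-k}$ is $\diag\!\big(A_k\bfX_{n-k}\big)$, with different $k$'s contributing independently given the past.

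Stationarity of $(\bfu_n)$ is immediate: $\bfu_n$ is a fixed measurable function of the stationary sequence $(\bfX_m)_{m\le n}$ (shifted consistently in $n$), so the finite-dimensional distributions of $(\bfu_n)$ are shift-invariant. For the mean, I would take expectations in the defining equation $\bfu_n=\bfX_n-\bfa_0-\sum_k A_k\bfX_{n-k}$ and use $\E\bfX_n=\big(1_{d\times d}-\sum_k A_k\big)^{-1}\bfa_0=:\bfmu$ (obtained by taking expectations of the INAR recursion, the inverse existing because \eqref{INAR_stability_criterion} keeps the spectral radius of $\sum_k A_k$ below one); then $\E\bfu_n=\bfmu-\bfa_0-\big(\sum_k A_k\big)\bfmu=0_d$.

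The covariance is the substantive part. The clean way is to observe that $\bfu_n=\big(\bfX_n-\E[\bfX_n\mid\F_{n-1}]\big)$, where $\F_{n-1}:=\sigma(\bfX_m:m\le n-1)$, because by the INAR recursion and the thinning mean identity $\E[\bfX_n\mid\F_{n-1}]=\bfa_0+\sum_k A_k\bfX_{n-k}$. Hence $(\bfu_n)$ is a sequence of martingale-difference innovations: $\E[\bfu_n\mid\F_{n-1}]=0_d$, which gives orthogonality $\E[\bfu_n\bfu_{n'}^\top]=0_{d\times d}$ for $n\ne n'$ by conditioning on the later time's past. For the diagonal block $n=n'$, I would compute $\E[\bfu_n\bfu_n^\top]=\E\big[\Var(\bfX_n\mid\F_{n-1})\big]$; the conditional covariance is, by the independence of the thinnings across $k$ and of the innovation $\bfepsilon_n$, equal to $\sum_k\diag(A_k\bfX_{n-k})+\diag(\bfa_0)=\diag\!\big(\bfa_0+\sum_k A_k\bfX_{n-k}\big)=\diag\big(\E[\bfX_n\mid\F_{n-1}]\big)$. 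Taking expectations and using $\E\bfX_n=\bfmu=\big(1_{d\times d}-\sum_k A_k\big)^{-1}\bfa_0$ yields $\E[\bfu_n\bfu_n^\top]=\diag(\bfmu)=\diag\!\big(\big(1_{d\times d}-\sum_k A_k\big)^{-1}\bfa_0\big)$, which matches the claimed formula (the stated expression $\diag\big((1_{d\times d}-\sum_k A_k)^{-1}\big)$ should be read with $\bfa_0$ inserted, or in the homogeneous normalization used elsewhere in the paper).

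The main obstacle is bookkeeping the conditional variance of the multivariate thinning correctly: one must justify that, given $\F_{n-1}$, the vectors $A_1\circledast\bfX_{n-1},\dots,A_p\circledast\bfX_{n-p},\bfepsilon_n$ are mutually independent (this is exactly the independence built into Definition~\ref{mvINAR(p)}), and that each $A_k\circledast\bfX_{n-k}$ has conditional covariance $\diag(A_k\bfX_{n-k})$—the off-diagonal entries vanishing because the thinnings $\alpha_{i,j}\circ X_j$ and $\alpha_{i',j}\circ X_j$ for $i\ne i'$ use disjoint, independent Poisson families even when they thin the same $X_j$. Once that independence is spelled out, the variance decomposition is additive and the computation above goes through without further difficulty.
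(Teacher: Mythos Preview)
Your argument is correct and follows essentially the same route as the paper: both recognize $\bfu_n=\bfX_n-\E[\bfX_n\mid\F_{n-1}]$ as a martingale difference to get the off-diagonal vanishing, and both compute the diagonal block via the conditional (Poisson) variance $\diag\!\big(\bfa_0+\sum_k A_k\bfX_{n-k}\big)$ followed by taking expectations. Your use of the law of total variance is slightly more compact than the paper's explicit expansion, and you are right that the displayed covariance in the statement is missing the factor $\bfa_0$; the paper's own proof indeed ends with $\diag\!\big((1_{d\times d}-\sum_k A_k)^{-1}\bfa_0\big)$.
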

\begin{proof}
This can be shown by straightforward (if lengthy) calculations; see Appendix~\ref{pf_white_noise}.
\end{proof}
As a consequence of Proposition~\ref{white_noise}, a $d$-variate INAR($p$) process can be represented as a standard $d$-variate autoregressive time series with (dependent) white-noise errors:
\begin{corollary}\label{AR-representation} Let $\left(\bfX_n\right)$ be the multivariate INAR($p$) sequence and $(\bfu_n)$ the white-noise sequence from Proposition~\ref{white_noise}. Then $\left(\bfX_n\right)$ solves
the system of stochastic difference-equations
$$
\bfX_n = \mathbf{a}_0 + \sum\limits_{k=1}^p A_k \bfX_{n-k} + \bfu_n,\quad n\in\Z.
$$
\end{corollary}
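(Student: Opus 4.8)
The plan is to obtain Corollary~\ref{AR-representation} as an immediate algebraic rearrangement of the defining identity for the white-noise sequence in Proposition~\ref{white_noise}, so that essentially all of the work has already been done. First I would recall that Proposition~\ref{white_noise} \emph{defines} $\bfu_n$ by
$$
\bfu_n := \bfX_n - \bfa_0 - \sum_{k=1}^p A_k \bfX_{n-k}, \quad n\in\Z,
$$
for the given INAR($p$) sequence $\left(\bfX_n\right)$ and its innovation-parameter vector $\bfa_0$ and thinning-coefficient matrices $A_k$. Adding $\bfa_0 + \sum_{k=1}^p A_k \bfX_{n-k}$ to both sides yields exactly the system of stochastic difference-equations asserted in the corollary, pathwise (almost surely) for every $n\in\Z$; no integrability or further structural hypothesis is needed for this rearrangement step alone.

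What remains — and this is the only substantive input — is the fact that $\left(\bfu_n\right)$ is a legitimate error sequence in the VAR($p$) sense, i.e.\ stationary, centred, and serially uncorrelated with the stated diagonal lag-zero second-moment structure; but that is precisely the content of Proposition~\ref{white_noise}, which I would simply invoke. I would flag one caveat worth stating explicitly: $\left(\bfu_n\right)$ is in general \emph{not} i.i.d.\ (it is a \emph{dependent} white noise), so the representation is a VAR($p$) with white-noise—rather than independent—innovations; this is still enough for the conditional-least-squares asymptotics developed in the sequel, which rely only on uncorrelatedness and finite second moments. There is no real obstacle here: the corollary is a reformulation of Proposition~\ref{white_noise}, repackaged in the form that makes the link to classical vector-autoregression estimation transparent.
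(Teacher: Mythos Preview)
Your proposal is correct and matches the paper's treatment exactly: the corollary is stated without a separate proof, as it is just the definition of $\bfu_n$ in Proposition~\ref{white_noise} rearranged, with the white-noise property supplied by that proposition. Your explicit caveat that $(\bfu_n)$ is dependent white noise rather than i.i.d.\ is also in line with the paper's own emphasis.
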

Such vector-valued time series with linear autoregressive structure 
have early on been examined; see, e.g., \citet{hannan70}. However, estimation in a multivariate context requires cumbersome notation. 
In order to make our results comparable, 
we follow one reference throughout, namely the monograph \citet{luetkepohl05}. Adapting its notation is also the reason why we work with wide matrices---i.e., matrices having a number of columns in the order of the sample size---instead of the more common long matrices.
\begin{definition}\label{CLS}
Let $(\mathbf{x}_k)_{k\in\N}$ be an $\R^d$-valued sequence, where we interpret $\bfx_k$ as a column vector. Fix $p$ and $n\in\N$, $p < n$, and define the \emph{multivariate conditional least-squares estimator} as
\begin{eqnarray*}
\hat{\theta}^{(p,n)}_{CLS}: \R^{d\times n} & \longrightarrow &\R^{d\times (dp+1)}\\
(\mathbf{x}_1,\dots ,\mathbf{x}_n)&\longmapsto& \hat{\theta}^{(p,n)}_{CLS}\left(\mathbf{x}_1,\dots ,\mathbf{x}_n\right):=  \bfY\bfZ^\top\left(\bfZ \bfZ^\top\right)^{-1},
\end{eqnarray*}
where
$$
\bfZ\left(\mathbf{x}_1,\dots ,\mathbf{x}_n\right):=\left(\begin{array}{cccc}\mathbf{x}_{p} & \mathbf{x}_{p+1}& \dots   & \mathbf{x}_{n-1} \\
\mathbf{x}_{p-1} & \mathbf{x}_{p} & \dots  & \mathbf{x}_{n-2} \\\dots  & \dots  & \dots  & \dots  
 \\
\mathbf{x}_{1} & \mathbf{x}_{2} & \dots    & \mathbf{x}_{n-p} 
\\ 1 & 1 & \dots & 1\end{array}\right)\in\R^{(dp+1)\times (n-p)}
$$
is the \emph{design matrix}
and 
$
\bfY\left(\mathbf{x}_1,\dots ,\mathbf{x}_n\right):=\left(\mathbf{x}_{p+1}, \mathbf{x}_{p+2}, \dots  ,\mathbf{x}_{n}\right)\in\R^{d\times(n-p)}.
$
\end{definition}
%The following minimizing property can be shown : 
%$$
%\hat{\theta}_{CLS} = \argmin\limits_{B\in\R^{d\times(dp+1)}}\|\bfY - \mathbf{B}\bfZ\|^2.
%$$
%In stochastic terms, the CLS-matrix minimizes the trace of the estimated covariance matrix of the error terms.  
%Dealing with vector-valued time series, t
\noindent Dealing with multivariate time series the following notations turn out to be useful:
\begin{definition}
The  $vec(\cdot)$\emph{-operator} takes a matrix as its argument and stacks its columns. The binary $\otimes$-operator is the \emph{Kronecker operator}: for an $m\times n$ matrix $A = (a_{i,j})$ and a $p\times q$ matrix $B$, $(A\otimes B)$ is the $mp\times nq$ matrix consisting of the block-matrices $a_{i,j} B$, $i= 1,\dots, m,\; j = 1,\dots,n$.
\end{definition}
The $\vec$-notation arises because the estimator is matrix-valued and we have no notion of the covariance of a random matrix. As we will see the $\otimes$-notation is strongly related to the $\vec$-operator.
For a large collection of properties of these operators; see Appendix A of \citet{luetkepohl05}. 
The following theorem collects all relevant information for CLS-estimation of multivariate INAR($p$) sequences. Together with the approximation results from Section~\ref{approximation}, this theorem is the theoretical basis for our Hawkes estimation procedure.

\begin{theorem}\label{inference}
Let $\left(\bfX_n\right)$ be a $d$-dimensional INAR($p$) sequence as in Definition~\ref{mvINAR(p)}  with immigration-parameter (column) vector $\bfa_0\in\R_{\geq 0}^d\setminus \{0_d\}$, and reproduction-coefficient matrices $A_k\in\R_{\geq 0}^{d\times d},\,k\in\{1,2,\dots,p\}$, such that $\mathrm{spr}\left(\sum_{k=1}^p A_k\right) < 1$. 
 Let
\begin{align*}
\bfB&:=\big(A_1, A_2,\dots  ,A_p, \bfa_0\big) \in\R^{d\times (dp+1)}
\quad  \text{and }\\
 \hat{\bfB}^{(n)}&:=\hat{\theta}_{CLS}^{(p,n)}\big((\bfX_k)_{k=1,\dots ,n}\big)\in\R^{d\times (dp+1)}
\end{align*}
the CLS-estimator with respect to the sample $(\bfX_k)_{k=1,\dots ,n}$. Then $\hat{\bfB}^{(n)}$ is a weakly consistent estimator for $\bfB$. Furthermore, let $\bfZ$ be the design matrix from Definition~\ref{CLS} with respect to $(\bfX_k)_{k=1,\dots ,n}$. Assume that the limit 
\begin{align}
\frac{1}{n-p}\bfZ\bfZ^\top\stackrel{\mathrm{p}}{\longrightarrow}:\Gamma\in\R^{{(dp+1)\times (dp+1)}},\quad n\longrightarrow \infty,\label{Gamma}
\end{align}
exists and is invertible. In addition, assume that the model is irreducible in the sense that $\P[\bfX_{0,i} = 0]<1,\, i = 1,2,\dots,d$. Then, for the asymptotic distribution of $\vec\left(\hat{\bfB}^{(n)}\right)\in\R^{d^2p+d}$, one has, for $n\to\infty$,
\begin{align*}
\sqrt{n-p}\Big( \vec\big(\hat{\bfB}^{(n)}\big)&- \vec\big(\bfB\big) \Big)\\
&\stackrel{\mathrm{d}}{\longrightarrow}\;
\mathcal{N}_{d^2p+d}
\Big( 0_{d^2p+d},\left(\Gamma^{-1}\otimes  1_{d\times d}\right)W \left(\Gamma^{-1}\otimes  1_{d\times d}\right)\Big), 
\end{align*}
where
%:
\begin{align}
W:=
\E\left[\Big(\bfZ_0\otimes 1_{d \times d}\Big)
\bfu_0\Big(\big(\bfZ_0\otimes 1_{d \times d}\big)\bfu_0\Big)^\top \right]
\label{W}\in\R^{(d^2p+d)\times(d^2p+d)}
\end{align}
with
$$
\bfu_0:= \bfX_0 - \mathbf{a}_0 - \sum\limits_{k=1}^pA_k \bfX_{-k}
\quad
\text{and}
\quad
\bfZ_0 := \Big(\bfX^\top_{-1}, \bfX^\top_{-2},\dots ,\bfX^\top_{-p},1\Big)^\top.
$$
\end{theorem}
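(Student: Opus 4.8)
The plan is to use the linear autoregressive representation of Corollary~\ref{AR-representation}, under which $\hat{\bfB}^{(n)}$ is exactly the ordinary multivariate least-squares estimator of a VAR($p$) model, and then to run the classical argument for such estimators (as in \citet{luetkepohl05}, Ch.~3) with one modification: here the innovations $(\bfu_n)$ are not i.i.d.\ but form a stationary, ergodic \emph{martingale-difference} sequence, so the i.i.d.\ central limit theorem in the textbook proof must be replaced by a martingale CLT.

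First I would record the structural facts. From the thinning construction one checks $\E[\bfX_n\mid\bfX_{n-1},\bfX_{n-2},\dots]=\bfa_0+\sum_{k=1}^pA_k\bfX_{n-k}$, hence $\E[\bfu_n\mid\bfX_{n-1},\bfX_{n-2},\dots]=0_d$; thus $(\bfu_n)$ is a martingale difference for the natural filtration, and in particular the white noise of Proposition~\ref{white_noise}. Under $\mathrm{spr}(\sum_kA_k)<1$ the stationary INAR($p$) sequence is a fixed, shift-equivariant measurable function of the i.i.d.\ array of innovation and thinning variables (e.g.\ via the branching/immigration representation), so $(\bfX_n)$ --- and every sequence obtained from it by a fixed finite-window measurable map, in particular $(\bfZ_{n-1})$, $(\bfu_n)$ and $\bfw_n:=(\bfZ_{n-1}\otimes 1_{d\times d})\bfu_n$ --- is stationary and ergodic; since the innovations are Poisson and the recursion is stable, $\E\|\bfX_0\|^m<\infty$ for all $m$, so all moments appearing below are finite. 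The irreducibility assumption $\P[\bfX_{0,i}=0]<1$ guarantees $\E[\bfu_{0,i}^2]=\E[\bfX_{0,i}]>0$, so the innovation vector is genuinely $d$-dimensional.

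Then I would decompose the estimation error. With $\mathbf U:=(\bfu_{p+1},\dots,\bfu_n)$ one has $\bfY=\bfB\bfZ+\mathbf U$, hence $\hat{\bfB}^{(n)}-\bfB=\mathbf U\bfZ^\top(\bfZ\bfZ^\top)^{-1}$, and, using $\vec(AXB)=(B^\top\otimes A)\vec(X)$ together with $\vec(\mathbf U\bfZ^\top)=\sum_{t=p+1}^n(\bfZ_{t-1}\otimes 1_{d\times d})\bfu_t=\sum_{t=p+1}^n\bfw_t$,
$$
\sqrt{n-p}\,\Big(\vec\big(\hat{\bfB}^{(n)}\big)-\vec\big(\bfB\big)\Big)=\Big[\big(\tfrac{1}{n-p}\bfZ\bfZ^\top\big)^{-1}\otimes 1_{d\times d}\Big]\;\frac{1}{\sqrt{n-p}}\sum_{t=p+1}^n\bfw_t .
$$
By Birkhoff's ergodic theorem, $\tfrac1{n-p}\bfZ\bfZ^\top\to\E[\bfZ_0\bfZ_0^\top]$ and $\tfrac1{n-p}\sum_t\bfw_t\to\E[\bfw_0]=0_{d^2p+d}$ almost surely (the last mean vanishes by the martingale-difference property); combined with the invertibility of $\Gamma$ postulated in \eqref{Gamma} this already gives weak consistency $\hat{\bfB}^{(n)}\stackrel{\mathrm p}{\longrightarrow}\bfB$, and moreover $[(\tfrac1{n-p}\bfZ\bfZ^\top)^{-1}\otimes 1_{d\times d}]\stackrel{\mathrm p}{\longrightarrow}\Gamma^{-1}\otimes 1_{d\times d}$ by continuity of inversion and of $\otimes$. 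For the remaining factor, $(\bfw_t)$ is a stationary ergodic martingale difference with $\E\|\bfw_0\|^2<\infty$, so applying a central limit theorem for such sequences to $c^\top\bfw_t$ for every fixed $c$ and combining through the Cram\'er--Wold device yields $(n-p)^{-1/2}\sum_{t=p+1}^n\bfw_t\stackrel{\mathrm d}{\longrightarrow}\mathcal N_{d^2p+d}(0_{d^2p+d},W)$ with $W=\E[\bfw_0\bfw_0^\top]$ as in \eqref{W}. Slutsky's lemma --- using that $\Gamma^{-1}\otimes 1_{d\times d}$ is symmetric --- then delivers
$$
\sqrt{n-p}\,\Big(\vec\big(\hat{\bfB}^{(n)}\big)-\vec\big(\bfB\big)\Big)\stackrel{\mathrm d}{\longrightarrow}\mathcal N_{d^2p+d}\Big(0_{d^2p+d},\big(\Gamma^{-1}\otimes 1_{d\times d}\big)W\big(\Gamma^{-1}\otimes 1_{d\times d}\big)\Big).
$$

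The $\vec$/Kronecker bookkeeping and the moment bounds are routine. The genuinely delicate step is the central-limit argument: since $(\bfu_n)$ is merely a martingale difference and not i.i.d., the VAR CLT of \citet{luetkepohl05} cannot be quoted verbatim and one must appeal to a martingale CLT, whose hypotheses (stationarity, ergodicity, finite second moments of $\bfw_0$) are exactly the facts assembled in the second paragraph; verifying those --- in particular the ergodicity of the stationary INAR($p$) law --- is where the real content lies.
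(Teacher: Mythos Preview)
Your proposal is correct and follows essentially the same route as the paper: the VAR representation of Corollary~\ref{AR-representation}, the error decomposition $\hat{\bfB}^{(n)}-\bfB=\mathbf U\bfZ^\top(\bfZ\bfZ^\top)^{-1}$, the $\vec$/Kronecker identity, and a martingale-difference CLT for $\bfw_t=(\bfZ_t\otimes 1_{d\times d})\bfu_t$. The only differences are in the auxiliary ingredients: the paper invokes Hamilton's vector-valued martingale CLT (his Proposition~7.9), whose hypothesis that $W$ be positive definite is exactly where $\bfa_0\neq 0_d$ and the irreducibility assumption $\P[\bfX_{0,i}=0]<1$ enter, whereas your Cram\'er--Wold reduction to the scalar stationary-ergodic martingale CLT sidesteps that check (the degenerate case $c^\top Wc=0$ being trivial); and the paper obtains ergodicity by embedding $(\bfX_n)$ as a margin of a $pd$-dimensional INAR(1) Markov chain rather than via the representation as a shift-equivariant function of the i.i.d.\ innovation/thinning array that you sketch. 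These are minor variations on the same argument.
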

\begin{proof}
In view of 
%the alternative representation of the INAR($\infty$) model in 
Corollary~\ref{AR-representation},
 it suffices to prove Theorem~\ref{inference} for the corresponding vector-valued autoregressive time series. So the distributional properties of the CLS-estimator can be derived similarly as in \citet{luetkepohl05}, pages 70--75, where independent errors are assumed. We provide a highly self-contained proof for the dependent white-noise case in Appendix~\ref{pf_inference}.
\end{proof}
Note that the condition $\P[\bfX_{0,i} = 0]<1,\, i = 1,2,\dots,d,$ in Theorem~\ref{inference} above is purely technical: if we had $\P[\bfX_{0,i_0} = 0]=1$ for some $i_0$, this would imply that in one component of our sample we cannot observe any events. We may exclude this case with a clear conscience.

\subsection{The Hawkes estimator}\label{hawkes_estimator}
Combining Theorem~\ref{inference} with the basic approximation from Section~\ref{connection} yields the following estimator for multivariate Hawkes processes:
\begin{definition} \label{estimator}
Let $\bfN= \left(N^{(1)},N^{(2)},\dots,N^{(d)}\right)$ be a $d$-variate Hawkes process with baseline-intensity vector $\bfeta\in\R_{\geq 0}^d\setminus \{0_d\}$ and excitement function $H=(h_{i,j}):\R_{\geq 0}\to \R_{\geq 0}^{d\times d}$ such that \eqref{stability_criterion} holds. 
Let $T>0$ and
consider a sample of the process on the time interval $(0,T]$.
%\begin{equation}
%\Bigg(\left(T^{(1)}_1,\dots,T^{(1)}_{N^{(1)}\big((0,T]\big)}\right), \dots ,\left(T^{(d)}_1,\dots,T^{(d)}_{N^{(d)}\big((0,T]\big)}\right)\Bigg).\label{sample}
%\end{equation}
For some $\Delta>0$, construct the $\N^d_0$-valued \emph{bin-count sequence} from this sample:
%\begin{align}
%\bfX^{(\Delta)}_k :=\bigg(\#\left\{T^{(j)}_i \in \big((k-1)\Delta,k\Delta\big],\, 1\leq i \leq  N^{(j)}\big((0,T]\big)\right\}\bigg)_{j = 1,\dots ,d},\label{bin_count_sequence}
%\end{align}
% $k = 1, 2, \dots , n := \left\lfloor T/\Delta\right\rfloor$.
\begin{align}
\bfX^{(\Delta)}_k := \bigg(N^{(j)}\Big(\big((k-1)\Delta,k\Delta\big]\Big)\bigg)_{j = 1,\dots ,d},\quad k = 1, 2, \dots , n := \left\lfloor T/\Delta\right\rfloor.\label{bin_count_sequence}
\end{align}

Define the \emph{multivariate Hawkes estimator} with respect to some support $s,\, \Delta < s <T$, by applying the CLS-operator from Definition~\ref{CLS} with maximal lag $p:=\lceil s/\Delta\rceil$ on these bin-counts:
\begin{align}
\hat{\bf{H}}^{{(\Delta,s)}}:=  \frac{1}{\Delta} \hat{\theta}^{(p,n)}_{CLS}\left(\left(\bfX^{(\Delta)}_k\right)_{k=1,\dots ,n}\right).\label{estimator_calculation}
\end{align}
\end{definition}
\noindent We collect the main properties of the estimator in the following remark.
\begin{remark} \label{cov}
The following additional notation clarifies what the entries of the $\hat{\bf{H}}^{{(\Delta,s)}}$ matrix actually estimate:
\begin{align}
\left(\hat{H}_1^{{(\Delta,s)}}, \dots , \hat{H}_p^{{(\Delta,s)}},\hat{\bfeta}^{{(\Delta,s)}}\right):=\hat{\bf{H}}
^{{(\Delta,s)}}\label{notation}
\end{align}
From Theorem~\ref{inference} on estimation of INAR($p$) sequences together with the basic approximation in Section~\ref{connection}, we see that, for $0<t<s$,
$$
\left(\hat{H}_{\lfloor t/ \Delta\rfloor}^{{(\Delta,s)}}\right)_{ij},\quad  i,j = 1,\dots ,d ,\quad \text{ respectively, }\quad \left(\hat{\bfeta}^{(\Delta,s)}\right)_{i},\quad i= 1,\dots ,d, 
$$
are weakly consistent estimates (for $T\rightarrow \infty$, $\Delta\rightarrow 0$ and $s = \Delta p\rightarrow \infty)$
for the excitement-function component value $h_{i,j}(t)$, respectively, for the baseline-intensity vector component $\bfeta_i$. Furthermore, we find from Theorem~\ref{inference} that
\begin{align}
\vec\left(\hat{\mathbf{H}}^{{(\Delta,s)}}\right) \stackrel{\mathrm{approx.}}{\sim}\mathcal{N}_{d^2p+d}\left(\vec\left(\mathbf{H}\right),S^2\right),\label{normal_approximation}
\end{align}
with
$$
S^2 := \frac{1}{\Delta^2 (n-p)} \left(\Gamma^{-1}\otimes  1_{d\times d}\right) W \left(\Gamma^{-1}\otimes  1_{d\times d}\right),
$$
where $\Gamma$ and $W$ are defined as in \eqref{Gamma} and \eqref{W} with respect to the bin-count sequences. Substituting $\Gamma$ and $W$ with their empirical versions yields the covariance estimate

%%% First version:
%We distinguish three different ways to estimate the covariance matrix $C:=\frac{1}{\Delta^2 (n-p)} \Gamma^{-1}\otimes\Sigma$.
%\begin{itemize}
%\item Straightforward estimation:
%$$
%\hat{C}^{(1)}:=\frac{1}{\Delta^2}\left(\bfZ \bfZ^\top\right)^{-1}  \otimes 
%\frac{1}{n-1}\sum\limits_{k=p+1}^n\bfu_k\bfu_k^\top 
%$$
%\item Estimation plugging the coefficient estimates into the analytical formula for $\Sigma$ from Lemma~\ref{white noise}, i.e.
%$$
%\hat{C}^{(2)}:= \frac{1}{\Delta^2}\left(\bfZ \bfZ^\top\right)^{-1}  \otimes 
%\tilde{\Sigma} 
%$$
%with $\tilde{\Sigma}:=\Delta \mathrm{diag}(\hat{\eta}) +\sum\limits_{k=1}^p\Delta\hat{H}_k \mathrm{diag}\left(\left(1_{d\times d} - \sum\limits_{k=1}^p\Delta\hat{H}_k  \right)^{-1}\Delta\hat{\eta}\right)$.
%\item 
%Reconsidering the equation for the covariance on page~\ref{covariance}. We directly estimate the `inner term' of this equation without taking the limit that makes the $\Gamma$-matrices involved cancel, i.e.
\begin{align}
\hat{S}^2:=\frac{1}{\Delta^2}\left(\left(\bfZ\bfZ^\top\right)^{-1}  \otimes 1_{d\times d}\right)\sum\limits_{k={p+1}}^n{\bfw_k \bfw_k^\top} \left(\left(\bfZ\bfZ^\top\right)^{-1}  \otimes 1_{d\times d}\right),\label{cov_estimator}
\end{align}
where $\bfZ$ is the design matrix from Definition~\ref{CLS} with respect to the bin-count sequence and, for $k= p+1, p+2, \dots, n$,
\begin{align*}
\bfw_k
&:= \left(\left(\left(\bfX^{(\Delta)}_{k-1}\right)^\top, \left(\bfX^{(\Delta)}_{k-2}\right)^\top,\dots ,\left(\bfX^{(\Delta)}_{k-p}\right)^\top,1\right)^\top\otimes 1_{d\times d}\right)\\
&\hspace{4cm}\cdot\,\left(\bfX_k^{(\Delta)} - \Delta\hat{\bfeta}^{(\Delta,s)} - \sum\limits_{l=1}^p\Delta\hat{H}_l^{{(\Delta,s)}} \bfX_{k-l}^{(\Delta)}\right).
\end{align*}

Following formulas 
% undo the vectorization of the estimate $\hat{\bfH}^{(\Delta,s)}$; they  
are useful for implementation of confidence intervals:
 \begin{align}
{\Cov}\left(
\left(
\hat{H}_{k_1}^{(\Delta,s)}
\right)_{i_1,j_1}, 
\left(\hat{H}_{k_2}^{(\Delta,s)}
\right)_{i_2,j_2}
\right) 
& 
= 
{S}^2_{(k_1 - 1) d^2 + (j_1 - 1) d + i_1,\, (k_2 - 1) d^2 + (j_2 - 
1) d + i_2}, \label{cov_est_calc}
\end{align}
for $i_1, i_2, j_1, j_2 \in\{1,\dots, d\}$ and $k_1,k_2\in\{1,\dots, p\}$.
\begin{align*}
\Cov\left(
\hat{\bfeta}_{i_1}^{(\Delta,s)}
, 
\hat{\bfeta}_{i_2}^{(\Delta,s)}\right)& =
S^2_{p d^2 + i_1,\, p d^2 p+ i_2},\quad \text{for }i_1,i_2\in\{1,...,d\}.
\end{align*}
\end{remark}
\noindent Applying Remark~\ref{cov} above together with Definitions~\ref{CLS} and~\ref{estimator}, our Hawkes estimation procedure may be implemented in a straightforward manner.
However, we emphasize that the resulting matrix $\hat{\mathbf{H}}^{{(\Delta,s)}}$ in \eqref{estimator_calculation} does not completely specify a fitted Hawkes model; it only yields pointwise estimates on a grid, whereas the true excitement-parameter is a function on $\R_{\geq 0}$; see Section~\ref{Hawkes_Process}. To complete the estimation, we have to apply some kind of smoothing method over the pointwise estimated values.
%Note that this approach includes the implicit assumption of a certain regularity of the true underlying excitement function. 
We work with cubic splines, normal kernel smoothers, and local polynomial regression ({\tt ksmooth()}, 
{\tt smooth.spline()} and
{\tt loess()} in {\tt R}). We find that the results do not vary significantly. The choice of the estimation parameters bin-size $\Delta$ and support $s$ has more impact. Therefore, we focus on the selection of these estimation parameters; see Section~\ref{refinements} The smoothing idea will be relevant in Section~\ref{choice_of_bin_size}, where we discuss variance issues.
In many applications, one can even avoid choosing and applying a smoothing method: practitioners might want to use our estimation procedure from Definition~\ref{estimator} for identifying or rejecting certain parametric models. For such purposes, the pointwise estimates suffice. The same is true if the estimation procedure is used as a mere tool for representing large event data sets; see Section~\ref{interpretation}. Finally, one is often only interested in the integral of the excitement; see the comments after~\eqref{stability_criterion}. In this case, it makes more sense to directly add up the estimates rather than to take the detour over some smoothing method.

\subsection{Simulation studies}\label{simulation_study}
We check the distributional properties of the Hawkes estimator collected in Remark~\ref{cov} in a first simulation study. The results are summarized in Figures~\ref{fig15},{~\ref{fig6}, and {\ref{rev1}}. {Note that at this point we omit the question of selection methods for the estimation parameters $s$ and $\Delta$ .}  This issue will be discussed separately in Sections~\ref{choice_of_support} and~\ref{choice_of_bin_size}. {Already in Figure~\ref{rev1}, however,  the impact of this choice on the estimation results is illustrated.}
\begin{figure}
\begin{center}
\includegraphics[width = 0.32\textwidth]{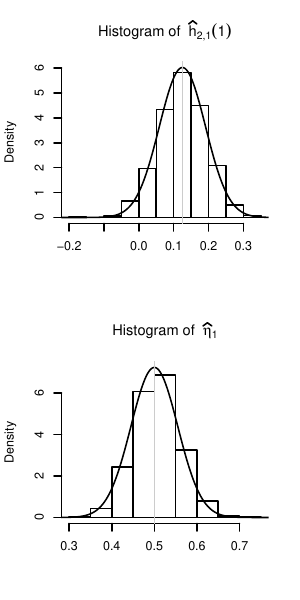}\hfill
\includegraphics[width =0.32 \textwidth]{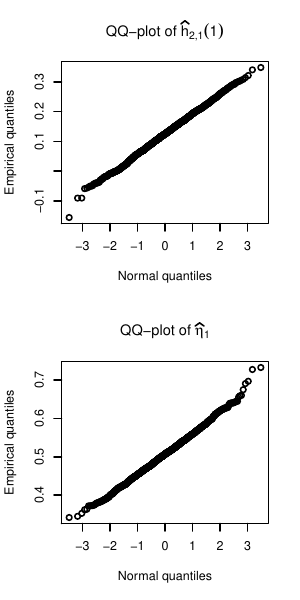}\hfill
\includegraphics[width =0.32\textwidth]{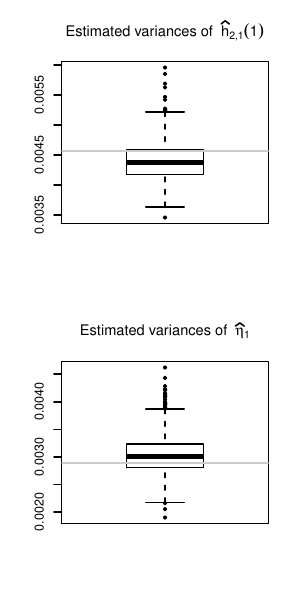}
\caption{Illustration of the simulation study described in Section~\ref{simulation_study}. The study confirms the distributional properties of our estimation procedure collected in Remark~\ref{cov}: We simulate 2\,000 times from the bivariate Hawkes process introduced in Figure~\ref{fig1}. In each simulation, we realize about 5\,000 events in each component. For all of these samples, we calculate our estimator from Definition~\ref{estimator} as well as the covariance estimator from \eqref{cov_estimator}. These calculations depend on two parameters, the support $s$ and the bin-size $\Delta$. We apply $s=6$ together with a relatively coarse bin-size $\Delta =0.2$. The upper-row panels illustrate the estimation of $h_{2,1}(1)=0.5 (1+1)^{-2} = 0.125$; the lower-row panels illustrate the estimation of the baseline-intensity component $\eta_1 = 0.5$. \emph{Left column panels:} the asymptotic normal densities around the true values (grey vertical lines) are added to the histograms. The grey vertical lines refer to the true values. The means of the estimates (not illustrated) would cover the true values. \emph{Middle column panels:} the QQ-plots support the asymptotic normality result. \emph{Right column panels:} the boxplots collect the 2\,000 estimated variances; see~\eqref{cov_estimator}. The horizontal grey lines refer to the empirical variance of the 2\,000 estimates.}
\label{fig15}
\end{center}
\end{figure}
\subsubsection*{{Bivariate estimation}}
We simulate $2\,000$ times from a bivariate Hawkes model with baseline intensity $\bfeta = \left(\eta_1,\eta_2\right)^\top = (0.5,0.25)^\top$ and excitement function
\begin{align}
H(t)&=\left(\begin{array}{cc} h_{1,1}(t) & h_{1,2}(t)\\
h_{2,1}(t) & h_{2,2}(t)
\end{array}\right) 
= \left(\begin{array}{cc}0& 1_{1<t\leq 3} 0.25\\
0.5(1+t)^{-2}\quad & 1_{t\leq \pi} 0.2 \sin(t)
\end{array}\right) ;\label{bivariate_example}
\end{align}
see Figure~\ref{fig1} for this parametrization and Figure~\ref{fig2} for an estimation of a single realization. In each simulation, about 5\,000 events in each component are generated and our Hawkes estimator \eqref{estimator_calculation} is calculated. 
We apply a bin size $\Delta = 0.2$ and a support parameter $s=6$. 
These calculations yield 2\,000 matrices of the form $\hat{\mathbf{H}}^{{(\Delta,s)}}\in\R^{2\times121}$. We examine the estimations of  $\eta_1=0.5 $, i.e., the baseline-intensity for the first component, and the estimations of  $h_{2,1}(1)=
%0.5(1+1)^{-2}=
 0.125$, i.e., the crossexcitement on component 2 from component 1 after one time unit. These values correspond to the entries $\hat{\mathbf{H}}^{{(\Delta,s)}}_{1,121}$ and $\hat{\mathbf{H}}^{{(\Delta,s)}}_{2,9}$ in the estimator matrices. We find that the 2\,000 estimates are distributed symmetrically around the true values. The means of the estimates correspond almost completely to the true values. QQ-plots support the asymptotic normality result. For both estimations, we also calculate the variance estimates from \eqref{cov_estimator}. Comparing the empirical variance of the 2\,000 estimates with the 2\,000 estimated variances confirms the analytic result. Furthermore, the empirical covering rates for the 95\%-confidence intervals are 94.5\% for the baseline-intensity estimate, respectively, 94.8\% for the excitement-value estimate. Note that the applied estimation parameters $\Delta = 0.2$ and $s = 6$ are considerably `wrong': the bin-size is quite large and the true support of $H$ would be $\infty$. We may interpret the successful estimation as a sign for the robustness of the method with respect to the estimation parameters.
\subsubsection*{{Variance of the estimates}} \begin{center}
\begin{figure}
\includegraphics[width = \textwidth]{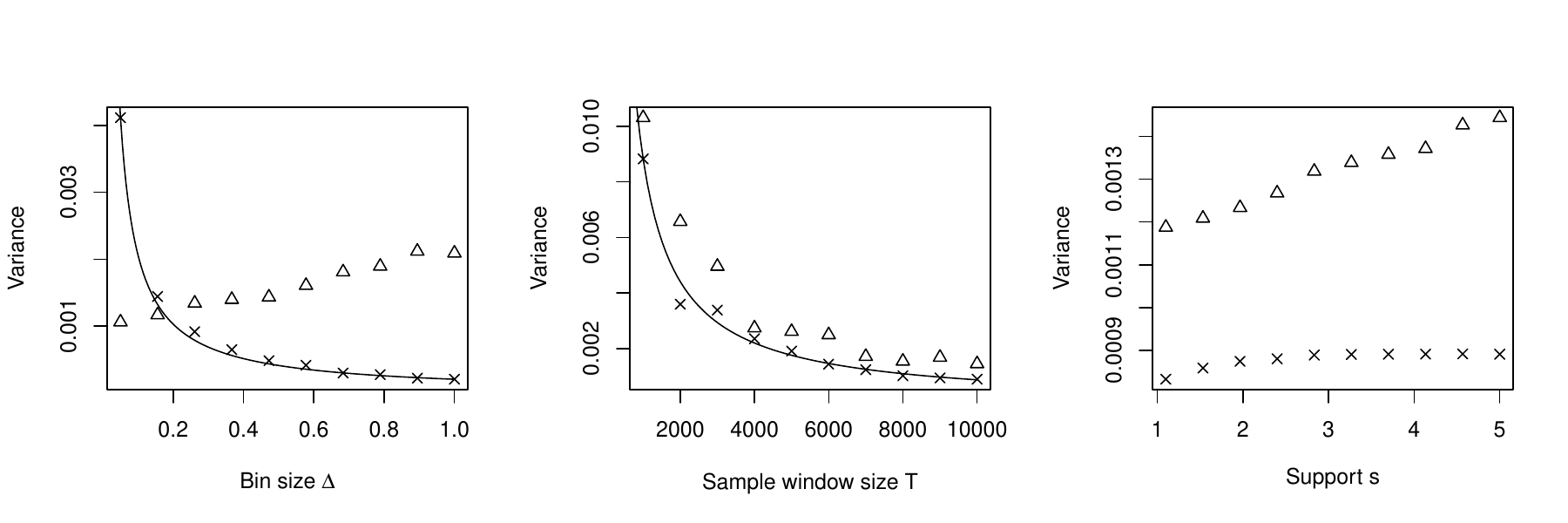}
\caption{The Hawkes estimator from Definition~\ref{estimator} depends on the bin size $\Delta$, on the size of the sample window $T$ and on the support parameter $s$. We examine empirically how the variances of the estimates depend on these three parameters. We simulate a very large sample from a univariate Hawkes process with excitement function $h: t\mapsto 1_{t\leq 3}(1+t)^{-2}$ and baseline intensity $\eta=1$. With respect to this single sample, we calculate the estimated variance for the estimates of $h(1) = 0.25$ (crosses) and $\eta = 1$ (triangles) using different $\Delta$,  $T$  and $s$; see \eqref{cov_estimator}. The solid lines in the two left panels are $\Delta \mapsto c_1\Delta^{-1}$, respectively, $T \mapsto c_2 T^{-1}$, for some constants $c_1,c_2 > 0$. The curves fit the variance estimates of the excitement-function estimate well. In contrast, the variance of the baseline estimate (triangles) is relatively constant with respect to $\Delta$. In the right panel, we see that the larger the support parameter $s$, the larger the variances become---this seems natural, as we estimate more parameters with respect to the same sample-size.
}
\label{fig6}
\end{figure}
\end{center}
Separately, we examine the impact of the choice of the bin-size $\Delta$, the support $s$ and the size of the sample window $[0,T]$ on the variances of the estimates; see Figure~\ref{fig6}. For various $\Delta$, $s$ and $T$, we calculate \eqref{cov_estimator}, the estimated covariance of the estimator matrix with respect to a single very large sample {of a univariate Hawkes process}. We find that the excitation and baseline estimation variances with respect to sample windows $[0,T]$ are proportional to $T^{-1}$. Variances slightly increase if we increase the support parameter $s$.  The variance of the baseline intensity estimate with respect to $\Delta$ is roughly constant in $\Delta$. However, the variance of the excitement estimate with respect to $\Delta$ is proportional to $\Delta^{-1}$. Albeit this relation, we will see in Section \ref{choice_of_bin_size} that the excitement estimates are still meaningful for very small values of $\Delta$.

%This is in line with time series theory: unnecessarily large lags in model assumptions yield a higher variance for the estimates; see 
\subsubsection*{{Estimation of the branching coefficient}}
\begin{figure}
\begin{center}
\subfigure[The boxplots summarize the estimates. The grey horizontal lines refer to the true branching coefficient 0.5. The crosses refer to the Riemann-type sum $\sum_{k = 1}^{\lceil s/ \Delta \rceil} \Delta h(k\Delta)$. 
The plots illustrate the three different errors of our method presented in Section~\ref{intuition}. \emph{Cut-off error}; see \eqref{cutoff_error}: for the estimates in the left panel and middle panel, we applied a too small support. This results in an underestimation of the branching coefficient. \emph{Discretization error}; see \eqref{discretization_error}: the boxplot centers catch $\sum_{k = 1}^\infty \Delta h(k\Delta)$ (crosses) rather than $\int h \d t$. \emph{Distributional error} \eqref{distributional_error}: the coarser the value of $\Delta$, the greater the distance from the average estimate to the Riemann-type sums (crosses); see the explanations in Section~\ref{intuition}.]{
\includegraphics[width = \textwidth]{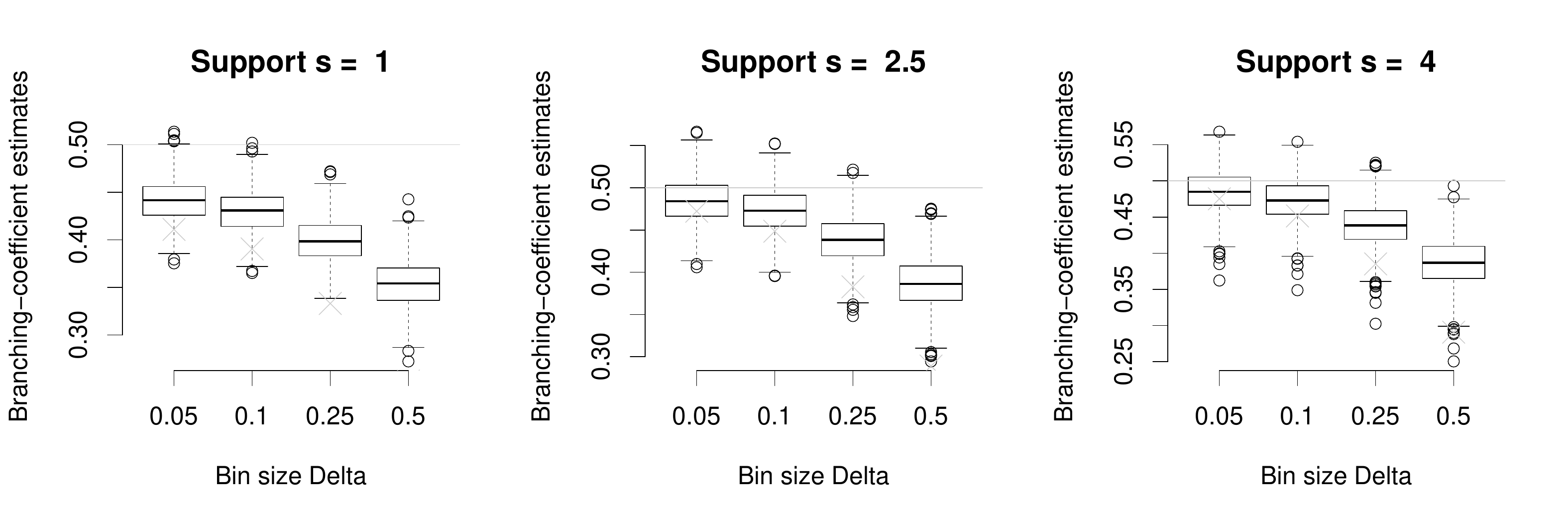}}
\subfigure[The boxplots collect the variance estimates. The crosses on the right of each boxplot refer to the corresponding empirical variance of the 1\,000 estimates together with a bootstrapped 95\%-confidence interval. We furthermore observe that the variance of the branching-coefficient estimates is relatively stable over smaller $\Delta$---this is in contrast with the point-wise estimates, whose variance behaves as $\Delta^{-1}$; see Figure~\ref{fig6}. This means, if the goal of the estimation is a branching-coefficient estimate, there is no reason not to choose $\Delta$ as small as possible.
The variance increases when the support is chosen larger. This is intuitive: we estimate more values with the same amount of data. Furthermore, we observe the center of the variance \emph{estimates} (boxplots) gets nearer and nearer to the empirical variances (crosses) the larger the support becomes. For large $s$ and (relatively) small $\Delta$ the average of the estimates catches the empirical variance perfectely.]{
\includegraphics[width = \textwidth]{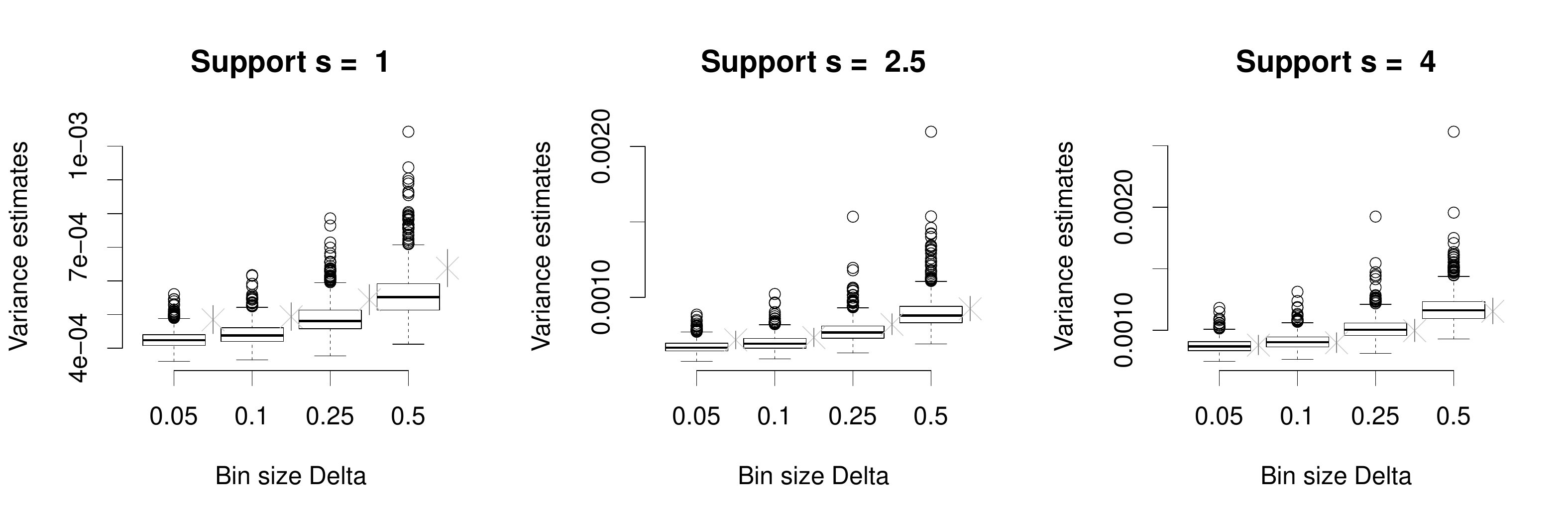}}
\caption{{Estimation of the branching coefficient; see Section~\ref{simulation_study}:  we simulate 1\,000 realizations from a univariate Hawkes process with excitement function $h(t):= \exp(-2t)$. Each realization consists of about 1\,000 events. For each realization we calculate the Hawkes estimator $\hat{H}^{(\Delta, s)}$ from Definition~\ref{estimator} with respect to $\Delta \in \{0.05, 0.1, 0.25, 0.5\}$ and $s\in \{1, 2.5, 4\}$. From each of these estimates, we derive the branching-coefficient estimate $\Delta \sum_{k = 1}^{\lceil s/ \Delta \rceil} H^{(\Delta, s)}_k$. We also calculate the corresponding variance estimates derived from~\eqref{cov_est_calc}. 
}
}\label{rev1}
\end{center}
\end{figure}

 {A particularly meaningful statistic of a Hawkes process is {its} branching matrix{; see the explanations below~\eqref{stability_criterion}}. In the univariate case, this matrix provides the (unconditional) probability that an arbitrarily chosen event has a parent event---that is, that the event is an endogenous event. This explains the alternative notion \emph{rate of endogeneity}. With the notation from \eqref{notation}, we define the natural branching-coefficient estimate $\hat{K}^{(\Delta, s)} := \Delta\sum_{k = 1}^p \hat{H}^{(\Delta, s)}_k$, where $p= \lceil s / \Delta\rceil$. The corresponding variance-estimate can be calculated from \eqref{cov_estimator}, respectively, \eqref{cov_est_calc}. Note that efficient ways to calculate these particular variance-estimates are given in \cite{kirchner16a}. From the same univariate Hawkes model as above, we simulate 1\,000 times and calculate the branching-coefficient estimate as well as the corresponding variance-estimates. We do this with respect to different bin-sizes $\Delta$ and different support-parameters $s$. Figure~\ref{rev1} illustrates the results. In particular, it visualizes the cut-off error, the discretization error, and the distributional error as discussed in Section~\ref{intuition}. It is no surprise that the empirical variance of the estimates also depends on the estimation parameters $\Delta$ and $s$. The variance estimates adapt accordingly.  However, if the support $s$ is chosen (much) too small, then the variance estimates are biased. On the other hand, the bin size $\Delta$ hardly influences the bias of the variance estimator: if the support is chosen large enough, then the variance estimates catch the empirical variance well---even for very coarse bin-sizes. We performed this relatively large simulation study on an ordinary laptop. This did not allow to apply really small bin-sizes. But in smaller simulation studies one finds that the estimation bias becomes essentially invisible for $\Delta \leq 0.01$ (not illustrated). Also note that integrating a smoothed version of the estimates typically captures the true branching-coefficient even better. However, the distribution of this kind of estimates seems less tractable and---again---as long as $\Delta$ is small, the difference between the methods vanishes.
}

{
\subsection{Alternative estimation methods}\label{alternative_estimation_methods}
There are other approaches to Hawkes estimation that propose alternatives to straightforward MLE. We first give an overview. Then we discuss a specific nonparametric method that is closely related to our estimation procedure.
\subsubsection*{Overview}
In \citet{lewis11}, the authors consider the estimation of univariate Hawkes processes with time-varying baseline intensities. Their simulation study shows that the convergence of the algorithm strongly depends on the underlying true model. It is not clear how feasible this approach would be in the multivariate set-up. In \citet{lemonnier14}, the authors approximate the excitement functions as well as the time-varying baseline intensities of a multivariate Hawkes process by sums of exponential functions. They show that the approximating model is a Markov process. This reduces the complexity of the likelihood to linear dependence on the number of observations and allows to apply MLE. In \citet{alfonsi15}, the authors incorporate a bivariate marked Hawkes model into a larger model for the price jumps on a market microstructure level. In this larger model, the Hawkes process models the times when a trade on either side of the market takes place. The authors also consider (i.i.d.) marks that influence the intensity. As in \citet{lemonnier14}, they approximate the excitement functions by sums of exponential functions. This allows to calibrate the model parameters via MLE. In \citet{hansen15}, the authors concentrate on the identification of the nonzero excitement functions of a multivariate Hawkes process by minimizing a least-squares objective of the realized intensity subject to an $l_1$-penalty. The method is comparable to the LASSO method from time series. In \citet{reynaud14}, this method is revisited with respect to diagnostic tests. Note that our method can also deal with the problem of identification of nonzero excitement functions; see the explanations after~\eqref{bm}. This idea is worked out in \citet{kirchner16a}, where we present a simulation study with fairly high-dimensional Hawkes processes ($d = 10$). Our approach has the advantage that the tuning parameter of the estimation has an interpretation in terms of significance of the observed excitement whereas, as a rule, the LASSO method offers no help when it comes to choosing the penalization parameter. 
 
\par Finally, we refer to the alternative nonparametric Hawkes estimation approach first introduced in \citet{bacry11b}. In \citet{bacry14a} the method is further developed and extended for the marked case. \citet{bacry15b} presents an application of the method in the context of high-frequency order flows and price jumps. As this method is closely related to our approach, we discuss it in more detail:

\subsubsection*{Bacry--Muzy method} The starting point of the Bacry--Muzy method (BM-method) is an integral equation of Wiener--Hopf type from \citet{hawkes71a}. This equation relates the autocovariance density of a multivariate Hawkes process with its excitement function. It is the analogue of the Yule--Walker equations for discrete-time processes. The BM-method basically consists of two steps:

\begin{enumerate}

\item  The autocovariance density (respectively, something very related and similar) of the event-stream data is estimated on a grid by a discretization scheme that depends on some bin-size $h$. Smoothing the estimated values yields continuous-time functions. These functions are plugged into a Wiener--Hopf type integral equation as described above.

\item For a maximal support $A>0$ and some $Q\in\N$, the Wiener--Hopf equation is discretized and solved for values of the excitement function by a quadrature with $Q$ quadrature points.
\end{enumerate}

%In our time-series terminology, the described method corresponds to directly applying Yule--Walker estimation to the bin-count sequences from~\eqref{bin_count_sequence}---if somewhat in disguise. 
We are not sure if this double discretizing and smoothing---once for the autocovariance density and then for the numerical solution of the Wiener--Hopf equation---is necessary: directly applying Yule--Walker estimation to the bin-count sequences would presumably yield similar results---and less calculation and consideration at that. 
%
%As a side remark, in the BM-method as well as in our method, smoothing could also be applied directly on the bin counts by working with `smoothed bin-counts' of the form $\tilde{X}^{(\Delta)}_n := \int w^{(\Delta)}(n\Delta - t) N(\d t)$ for some smoothing kernel $w$.
%
\par In \citet{bacry15b}, the authors show that their procedure can be extended for the estimation of marked processes as well: under the assumption that the \emph{mark functions} \citep{bacry15b}, respectively, \emph{impact functions} \citep{liniger09} only take a finite number of values, one can express the marked process as a higher dimensional Hawkes process without marks. This latter process is calibrated by the method for the unmarked case, and the estimates are appropriately transformed to the original marked model. In \citet{kirchner16b}, we apply this method in combination with our estimation method.
\par  
As for results, in most cases the estimates of the BM-method and of our method are presumably quite similar if we set $\Delta := h$, $s := A$, and $p := Q := \lceil A / h\rceil$. Or alternatively, if we want to mimic the double smoothing from the BM-method, we set
$\Delta := h$ and $s := A$ as before, and, in addition, $\tau := \lceil A / Q\rceil$. Here, $\tau > \Delta$ denotes a bandwidth over which our excitement estimates from Definition~\ref{estimator} are smoothed; see Section~\ref{choice_of_bin_size}. Note that for the BM-method, the choice of $A$ and the corresponding cut-off error \eqref{cutoff_error} are not discussed whereas  the choice of $s$ will be discussed in Section~\ref{choice_of_support}.  Despite the similarities to the BM-method, we believe that our method offers important new insights into  Hawkes process estimation:
\begin{enumerate}
\item {\bf Asymptotic distribution:} 
neither the BM-method nor the approaches mentioned in the overview present theoretical results regarding the distribution of the estimates. In contrast, we give \eqref{normal_approximation} and \eqref{cov_estimator} that open the door to confidence intervals and testing. For a particularly fertile application of our distributional results; see~\citet{kirchner16a}. Also note that in time series theory, the asymptotic distribution for YW-estimates is typically derived by noting---in a first step---that YW-estimates and CLS-estimates are asymptotically equivalent and then---in a second step---applying the asymptotic distribution of the CLS-estimates. Similarly, we can expect that the path towards the asymptotic distribution of the BM-method leads over our CLS-approach. 

\item {\bf Simplicity:} 
our method is the natural extension of the naive way to estimate general intensities of point processes as step-functions: namely normalizing the number of events in a bin by the length of the bin. Our method is also simpler in the sense that it can be explained without introducing concepts like autocovariance densities or Wiener--Hopf integral equations together with the theory of their solutions. Given the approximation insight from \citet{kirchner16c}, we only need the elementary concept of linear least-squares. Furthermore, for the BM-method, there are more choices involved that will influence the estimation: in addition to the bin-size choice $h$ and the support choice $A$ one has to choose a smoothing method for the autocovariance function estimate as well as the quadrature for the solution of the integral equation (and its parameters). In contrast, our method depends on only two parameters---with quite tractable effects at that. As a minor remark, note that the output of our calculations \eqref{notation} includes the baseline-intensity estimates whereas for the BM-method one has to do some further calculations.

\item {\bf Efficiency and bias:} 
both methods include the cut-off error, the discretization error, and the distributional error as presented in Section~\ref{intuition}. In the BM-method, by the two-step procedure, these errors seem less tractable and an additional (if small) error occurs from the Gaussian quadrature. Furthermore, in time series theory, for small sample-sizes, CLS-estimates have a smaller variance than YW-estimates because the YW-method depends on estimates of the autocovariance at large lags. These estimates typically have a very large variance as there is so little data for their estimation. The same can be expected for the point process world. Finally, it is well-known that YW- and CLS-estimates can become quite different for (multivariate) time series near unit roots. Citing from Section 4.4 in \citet{reinsel97}: `When the vector AR process is \dots\ near nonstationarity, it is known that the CLS-estimator \dots\ still performs consistently, whereas the Yule--Walker estimator may behave much more poorly with considerable bias, \dots' Given the interest in Hawkes models near criticality, e.g., in \citet{hardiman13} or \citet{rosenbaum14} this is another advantage of our CLS-based method. It would be interesting to perform a large simulation-study that compares the two methods more systematically and quantitively.
\end{enumerate}
}

\section{Refinements}\label{refinements}
Our Hawkes estimator $\hat{\bfH}^{(\Delta,s)}$ from Definition~\ref{estimator} depends on a bin size $\Delta>0$ and on a support $s>0$.  In the following section, we present procedures for sensible choices of these parameters. Furthermore, we discuss numerical and diagnostic issues.

\subsection{Choice of support}\label{choice_of_support}
\begin{figure}
\begin{center}
\subfigure[We consider a univariate Hawkes process with excitement function $h(t):= 1_{ t\leq 3} \exp(-t)$. The true support of the excitement is 3 (grey vertical lines). About 40\,000 events are simulated from this model. Following the ideas brought forward in Section~\ref{choice_of_support}, we apply automatic support-selection on this single large sample using the AIC-criterion with three different values for the preliminary bin-size $\Delta_0>0$. The value where the minimum AIC-value is attained (black vertical lines) hardly depends on $\Delta_0$ and though all three bin-sizes are rather coarse, the true support is estimated correctly up to few `$\Delta$-ticks'.
]{
{\includegraphics[width = 0.3\textwidth]{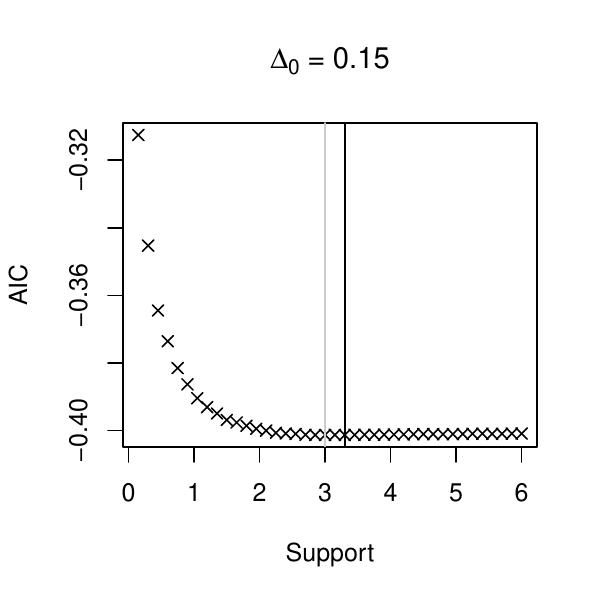}}\hfill
{\includegraphics[width = 0.3\textwidth]{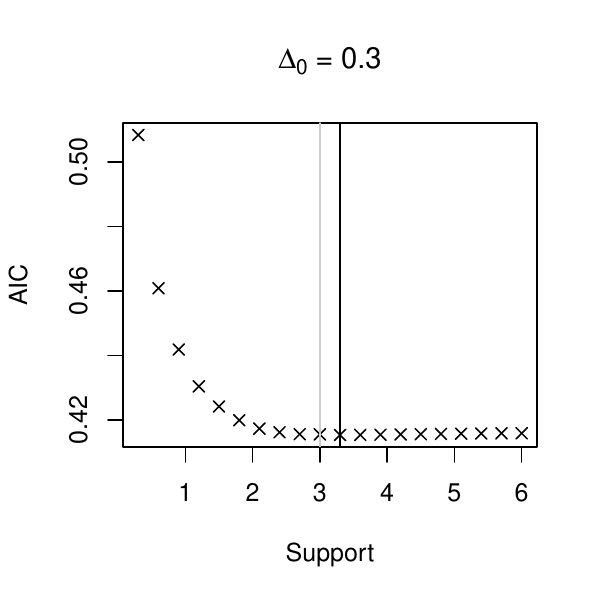}}\hfill
{\includegraphics[width = 0.3\textwidth]{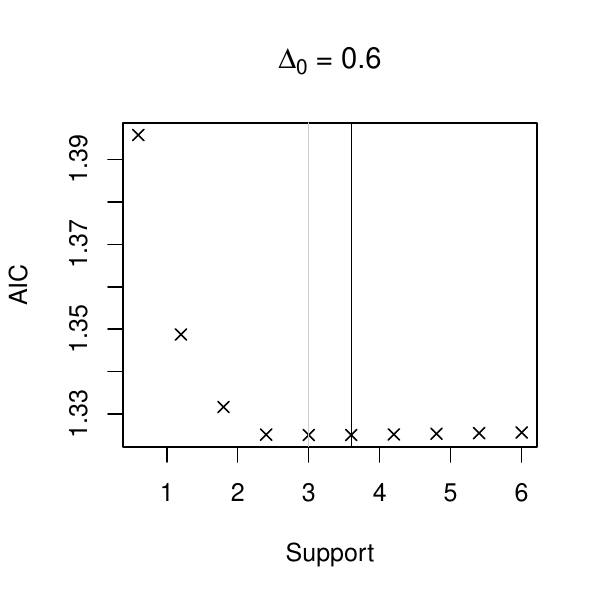}}
\label{fig4a}
}
\subfigure[We examine the infinite support case. To that aim, we consider simulated data from three univariate Hawkes process with excitement functions $h_{\alpha}(t):=0.5 \alpha\exp(-\alpha t)$, where $\alpha = 1.1 $ (circles for AIC-values and solid vertical line for AIC-minimizing support), $\alpha = 1.5$ (triangles and dashed line) and $\alpha = 2$ (crosses and dotted line). The larger $\alpha$, the lighter the tail of the function and, as desired, the smaller our estimated support; see Section~\ref{choice_of_support}. Note that the cut-off error \eqref{cutoff_error} is in all three cases so small ($<10^-3$ and much less) that it will typically be negligible in comparison to the estimation standard errors.
]{
\includegraphics[width = \textwidth]{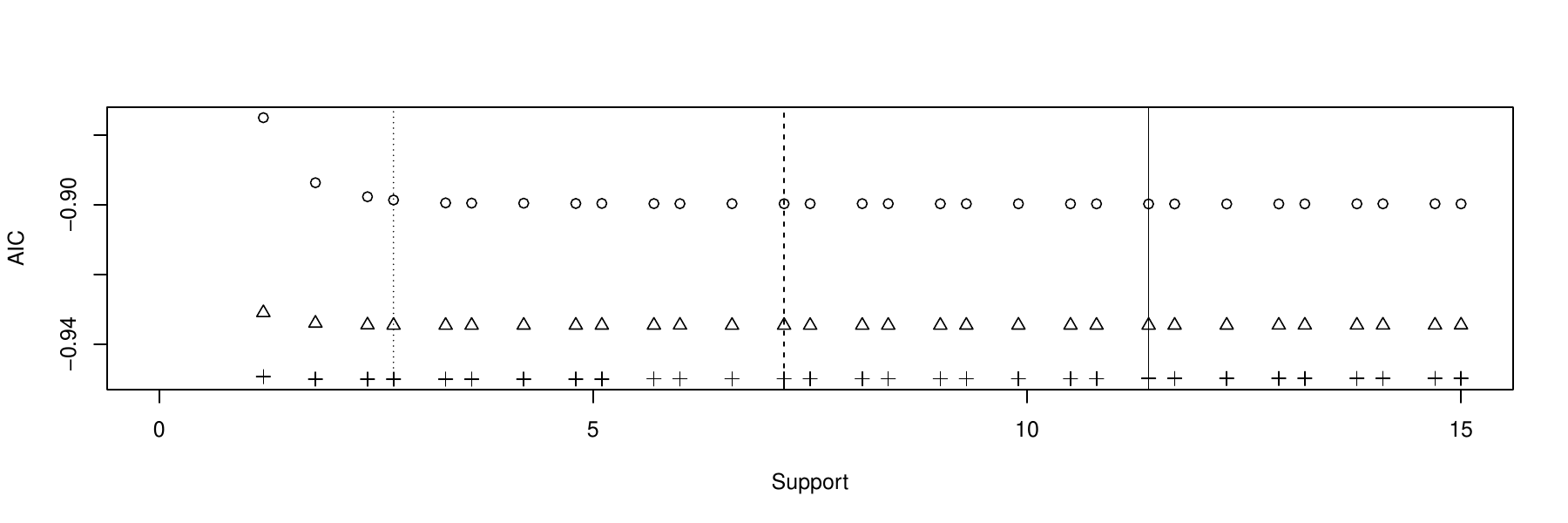}
\label{fig4b}}
\caption{
Simulation study on the choice of the support parameter of our estimator from Definition~\ref{estimator}; see Section~\ref{choice_of_support}. Figure~\ref{fig4a} illustrates the case where the true underlying support is finite and Figure~\ref{fig4b} illustrates the case where it is infinite.
}
\label{fig4}
\end{center}
\end{figure}

Estimating the support of the excitement function of a Hawkes process corresponds to estimating the largest lag of a nonzero reproduction-coefficient (matrix) of the approximating INAR sequence. In view of the VAR($p$) representation of INAR($p$) sequences from Corollary~\ref{AR-representation}, we can use any model-selection procedure stemming from traditional time series analysis; see Chapter 4.3 of \citet{luetkepohl05} for an overview of such procedures in the multivariate context. For comparison of different order-selection methods for univariate INAR($p$) sequences; see \citet{marques05}.  As a most common example, we apply Akaike's information criterion (AIC); see \citet{akaike73}. 
%\subsubsection{Application of Akaike's information criterion}\label{AIC_application}
%Given data and a family of potential models, one can calculate the AIC-value for each of the models. This number is a measure for the distance between the true model and the proposed model. The AIC not only rewards good fit but also punishes a large numbers of parameters. One then chooses the model with the minimal AIC-value.
\par We work in the setup of Definition~\ref{hawkes_estimator}. The starting point is a sample of a $d$-variate Hawkes process on $[0,T]$ together with a preliminary bin-size $\Delta_0>0$. In our experience, a preliminary bin-size of about one event on an average per bin and component is a good choice{; see the argumentation on the distributional error in Section~\ref{choice_of_bin_size}.} With respect to this $\Delta_0$, we calculate the bin-count sequence(s) {from the data} as in \eqref{bin_count_sequence}. Now let $n_0:= \lfloor T/\Delta_0\rfloor${, $p_0 \in\N$, and let $s_0 := p_0\Delta_0>0$} be some very large support, e.g., $s_0= T/10$. Then, for{ $p \in\{ 1,2,\dots , p_0\}$}, we calculate Akaike's information criterion
\begin{align}
\mathrm{AIC}^{{(\Delta_0)}}(p) := \log \left(\det \hat{\Sigma}^{{(\Delta_0)}}(p)\right) + \frac{2 p d^2}{n_0-p},\label{AIC}
\end{align}
where $\hat{\Sigma}^{{(\Delta_0)}}(p):=\sum_{k=p+1}^{n_0} {\hat{\bfu}_k^{({\Delta_0},p)} \hat{\bfu}_k^{({\Delta_0},p)}}^\top/{(n_0-p)}$.
Here, with the notation from Remark~\ref{cov}, 
$$
\hat{\bfu}_k^{({(\Delta_0)},p)}:= \bfX_k^{(\Delta_0)} - \Delta_0\hat{\bfeta}^{(\Delta_0,{p\Delta_0})} - \sum\limits_{l=1}^p\Delta_0\hat{H}_l^{{(\Delta_0,{p\Delta_0})}} \bfX_{k-l}^{(\Delta_0)},\quad k = p+1,p+2,\dots,n_0,
$$ 
denote the estimated prediction-error vectors with estimated coefficient-matrices from the fit of the approximating INAR($p$)-model with respect to $p$ lags; see \citet{luetkepohl05} for the multivariate AIC-formula \eqref{AIC}. Finally, we choose $\hat{p}^{{(\Delta_0)}}:=\argmin_{p\leq p_0} \mathrm{AIC}(p)$ as estimated maximal lag for the approximating INAR model, respectively, we choose
$\hat{s}^{{(\Delta_0)}}:= \hat{p}^{{(\Delta_0)}} \Delta_0$ as support parameter in the calculation of the Hawkes estimator \eqref{estimator_calculation}. {The estimate $\hat{s}^{{(\Delta_0)}}$ depends on $\Delta_0$.} {However, given our approximation result in Theorem~\ref{convergence}, we expect the estimates to be quite stable over the bin-size choices. Indeed, the examples below confirm this view: for large sample-sizes, we always get that $\hat{s}^{{(\Delta_0)}} \in [s\pm 2\Delta_0)$, where $s$ denotes the true underlying (finite) support of the data generating excitement function.}
\par In parametric Hawkes setups, the support of the Hawkes excitement function is typically chosen infinite. Our estimation procedure, however, assumes finite excitement. {Note that} from \eqref{stability_criterion}, we get that the excitement functions necessarily vanish for large times. In other words, the influence of the tail of the excitement on the model is negligible; see \eqref{cutoff_error} and Proposition~\ref{INAR(p)_approximation}. 
%In the INAR context, we have Proposition~\ref{INAR(p)_approximation} that formally deals with this issue. Statements analogous to this proposition can be made for multivariate Hawkes models. 
The only question remaining is how we can choose a support $p\in\N$, respectively, $s>0$, large enough so that the truncated model with the truncated excitement is a good approximation for the true model. {We will see in the examples below that in this infinite-support case} the AIC-approach from 
above{ also turns out to be helpful}. As a side remark note that the standard parametric approach is not free from cut-off errors either, as we only observe data in finite time-windows.

%\subsubsection*{{Example with piecewise constant excitement}} {We simulate from a univariate Hawkes process with excitement function $h(t):=1_{t\leq 4} / 8$.}
%
\subsubsection*{{Example with exponential decay}} 
We simulate 
%around 100\,000 events 
from a {univariate} Hawkes model with excitement function
$h(t):=1_{t\leq3}\exp(-t)
$
and calculate the AIC-minimizing support with respect to different preliminary {bin-sizes} $\Delta_0$. All results catch the true support very well{; the estimated results are remarkably stable with respect to the choice of $\Delta_0$.}
%More specifically, we find that the difference of the estimated excitement-function support using different bin-sizes $\Delta_0>0$ and $\tilde{\Delta}_0>0$ (with respect to the same data set) is less than $\max\left(\Delta_0,\tilde{\Delta}_0\right)$. 
Next, we consider the case of infinite support {$t\mapsto0.5\alpha\exp(-\alpha t)$} with respect to three different decay parameters $\alpha\in\{ 1.1, 1.5, 2\}$. Again, we simulate large samples for each of the three $\alpha$-values and then calculate the three corresponding AIC-minimizing support estimates. The smaller $\alpha$, the larger the tail of the true excitement function becomes and---as desired---the larger the support estimates $\hat{s}^{(\mathrm{AIC})}(\alpha)$ get. For all choices of $\alpha$, the ignored excitement weights, $0.5 \alpha \int_{\hat{s}^{(\mathrm{AIC})}(\alpha)}^\infty \exp(-\alpha t)\mathrm{d}t$, are very small (less than $10^{-3}$). {That is, in the exponential-decay case, our method controls the cut-off error \eqref{cutoff_error} well}.

\subsubsection*{{Examples with power-law decay}}
{
Next, we consider the case where the excitement function is governed by an extremely slowly decaying power-law such as
\begin{equation}
h(t) := \beta (\alpha - 1) ( 1 + t )^{-\alpha}\label{power_law}
%:
\end{equation} with $ \alpha - 1>0$, small, and $\beta  \in(0 , 1)$. We apply the AIC-based support-choice method from above to a large realization of a Hawkes process with excitement of form \eqref{power_law}. The realizations of $\hat{s}$ increase when $\alpha$ decreases (not illustrated)---as desired and just like in the (infinite) exponential-decay case. However, the cut-off error $\int_{\hat{s}}^\infty h(r)\d r$ is now typically large. E.g., for $\alpha = 1.15$ and $\beta = 0.5$, our method yields a support parameter $\hat{s}\approx 20$---quite independently of the choice of $\Delta_0$. This estimated support leaves a very large cut-off error of $\int_{20}^\infty h(t) \d t \approx 0.32$. In other words, we miss nearly two thirds of the total excitement ($=0.5$). How is it possible that our selection method opts for such a faulty model? 
%, despite a large cut-off error, the truncated model is a good approximation for the true model: 
%\begin{enumerate}
%\item\label{constant} For large $t$, slowly-decaying functions as \eqref{power_law} are essentially constant over large intervals. 
%\item \label{ergodicity} At the same time, on large time-intervals, by ergodicity, the Hawkes process is well approximated by its (unconditional) mean intensities, that is,
% $N(I)/\tau \approx  \eta / (1- \int_{0}^\infty h(t)\d t)$, for $|I| = \tau,$ large.
%\end{enumerate} 
%These two facts allow the following heuristic approximations for the truncated part in the conditional intensity \eqref{Hawkes_intensity}. For appropriate $s>0$ and $\tau>0$, we have that
%\begin{eqnarray*}
%\int_{-\infty}^{t - s} h(t- r) N(\d r)
%&
%\stackrel{\ref{constant}}{\approx}&\int_{-\infty}^{t - s} \sum_{k = 1}^\infty 1_{\{r\in[t - s - k\tau,\,t - s - (k-1)\tau )\}}h(s + k\tau) N(\d r)\\
% & =& \sum_{k = 1}^\infty \tau h(s + k\tau) \frac{N([t - s - k\tau,\,t - s - (k-1)\tau ))}{\tau}\\
% &\stackrel{\ref{ergodicity}}{\approx}&   \sum_{k = 1}^\infty \tau h(s + k\tau) \frac{\eta}{1 - \int_0^\infty h(t)\d t} ~\stackrel{\ref{constant}}{\approx}~
%  \int_{s}^\infty h(r)\d r \frac{\eta}{1 - \beta}.
%  \end{eqnarray*}
The AIC-based support selection chooses $\hat{s}$ 
%so large that the approximations above are good.
such that the increase of local explanatory power by looking even further back into the past is relatively small. 
%:
In other words, despite the large cut-off error,
the truncated model $({\eta}^{(\hat{s})}, {h}^{(\hat{s})})$ is locally very similar to 
the Hawkes model $(\eta, h)$, where ${h}^{(\hat{s})}(t):=1_{t\leq s} h(s)$ and ${\eta}^{(\hat{s})} :=  \eta +\eta/(1- \beta) \int_s^\infty h(t)\d t = \eta( 1 + \beta/(1 - \beta) (1 + \hat{s})^{1 - \alpha})$. 
%These two models are indeed very similar because $h(t)$ is quasi constant for large $t$. 
%This is the reason, why our AIC-based support-selection yields relatively small support values (leaving large cut-off errors) in the power-law case.  
%To summarize: when we observe a power law in excitement estimates as in Figure~\ref{loglog_plots}, we have two possibilities:
%\begin{enumerate}
%\item Extrapolation: we believe in infinite power-law excitement, fit the excitement with a support parameter $s$ as large as possible, and calibrate the power-law parameters on our nonparametric estimates.
%\item Truncation: we believe in good local approximation, calculate the AIC-optimal support parameter as proposed above, and trust its capability of good model selection.
%\end{enumerate}
%The actual choice is a question of application and taste.
% 
It would be interesting to discuss this truncation-approximation more quantitively as the cut-off comes with enormous computational advantages (for all estimation methods). We will touch the specific issues arising in the context of Hawkes excitement-functions with power-law decay again in Section~\ref{Bivariate_estimation}. }
\subsubsection*{{Bivariate example}}
\par Finally, we consider a bivariate Hawkes model with the excitement function $H$ from \eqref{bivariate_example}. We realize a single large sample from this model. Then we simulate another sample from a truncated version of the model, with
\begin{align*}
H^{(\mathrm{tr})}(t)=
\left(\begin{array}{cc} 
h_{1,1}(t) & h_{1,2}(t)\\
h^{(\mathrm{tr})}_{2,1}(t) & h_{2,2}(t)
\end{array}\right) 
= \left(\begin{array}{cc}0& 1_{1<t\leq 3} 0.25\\
1_{t\leq4}0.5(1+t)^{-2}\quad & 1_{t\leq \pi} 0.2 \sin(t)
\end{array}\right).
\end{align*}
The AIC-minimizing support estimate
is 9.5 for the original model and 4.2 for the truncated model. So the AIC-approach is able to discriminate between these cases.  
%\begin{figure}
%\begin{center}
%\includegraphics[width = 0.3\textwidth]{rev5}
%\includegraphics[width = 0.3\textwidth]{rev6}
%\includegraphics[width = 0.3\textwidth]{rev7}
%\caption{{
%}
%}\label{support_analysis}
%\end{center}
%\end{figure}

\subsection{Choice of bin size}\label{choice_of_bin_size}
Below, we discuss the choice of the bin size $\Delta>0$ for the Hawkes estimator $\hat{\bfH}^{(\Delta,s)}$ from Definition~\ref{estimator}. {At first sight, }one can interpret the choice of the bin size $\Delta$ as a bias/variance trade-off: the smaller $\Delta$, the smaller the potential bias stemming from the model approximation, i.e., the smaller the errors \eqref{distributional_error} and \eqref{discretization_error}. At the same time, due to the $1/\Delta$ factor in the calculation of the estimator matrix $\hat{\mathbf{H}}^{(\Delta,s)}$ from  \eqref{estimator_calculation}, its (componentwise) variance increases when $\Delta$ decreases. In a simulation study, we simulate $100$ times from a univariate Hawkes model with excitement function $h(t)= \exp(-1.1t)$. We suppose a reasonable support $s>0$ has already been chosen by a procedure as described in Section~\ref{choice_of_support}. For each sample, we calculate the Hawkes estimator with respect to three different bin sizes $\Delta\in\{0.1, 0.5, 1\}$. Figure~\ref{fig5} collects the estimation results in boxplots. The bias/variance trade-off is obvious. Note, however, that we had to choose the bin-size quite large to make the bias visible at all. {
In the following, we discuss various issues related to bin-size choice. Overall, we want to argue that one should choose the bin size $\Delta$ as small as (computationally) possible.}

 {\subsubsection*{Estimation of baseline intensities and branching coefficients}
 If we are only interested in estimates of aggregated values such as baseline-intensity components or branching coefficients, the bin-size $\Delta \downarrow 0$ leaves the variance of the estimates nearly uneffected. Indeed: let $\hat{K}^{(\Delta,s)}$ denote the branching-matrix estimate where each entry is estimated as the branching coefficient in the univariate example from Section~\ref{simulation_study}. Then we have that
\begin{equation}
\hat{\Lambda}~:=~\hat{\bfeta}^{(\Delta,s)}(1 - \hat{K}^{(\Delta,s)})^{-1}\label{average_intensity}
\end{equation}
is essentially equal to $\sum_{n = 1}^{\lceil T/\Delta \rceil} \bfX_n^{(\Delta)}/T=\bfN((0,T])/T$ and therefore \eqref{average_intensity} does not depend on $\Delta$. The same is true for $\hat{\bfeta}^{(\Delta,s)}$ and $\hat{K}^{(\Delta,s)}$: with respect to the same point process data, $\hat{\bfeta}^{(\Delta)}$ and $\hat{K}^{(\Delta)}$ hardly vary in $\Delta$. Figures~\ref{fig6} and~\ref{rev1} confirm this argumentation: if anything, the variances of baseline-intensity and branching-coefficient estimate decrease for $\Delta\downarrow 0$. Consequently, if we only want to estimate baseline intensities and branching coefficients, there is no reason why we should not choose the bin size $\Delta$ as small as computationally possible.
 }

 \subsubsection*{Estimation of the excitement function}
 {If we want to estimate specific values of the excitement functions, we face an increasing variance for decreasing $\Delta$; see the Figures~\ref{fig6} and~\ref{fig5}.} {However,} we should keep in mind that the final goal of our analysis may be the estimation of the excitement-function components $h_{ij}$---and not only {for} a finite number of their values $h_{ij}(k\Delta),\, k=1,2,\dots,p$. When we apply some smoothing method on these values, a smaller $\Delta$ typically leads to an `averaging' over more point estimates. This averaging balances the increase in pointwise variance. {At first sight, this seems an odd thing to do: the bandwidth of the smoothing method seems more or less equivalent to the bin size of the first bin-wise aggregation. In particular, it looks as if the bias that we avoided by applying a small bin-size $\Delta$ for the original estimation is reintroduced by choosing a coarser bandwidth $\tau$ for the smoothing of the estimates. This is only partly right. To understand this, we have to reconsider the errors in the approximative model equation from Section~\ref{intuition}:} {
a too large bin-size $\Delta$ effects two of the approximation errors, namely, the distributional error \eqref{distributional_error} and the discretization error \eqref{discretization_error}. The crucial observation is that the distributional error is \emph{not} reintroduced by the smoothing method. We repeat the source of the distributional error: for example, supporse that we observe three events in a bin. In the approximating bin-count model, we explain all of these three events by events in earlier bins. But the last of the three considered events is very likely a result of the two previous events in the bin in question itself! We do not account for this in our approximative model equation. Consequently, we overestimate the influence of the past on the bin (or overestimate the baseline intensity). This (important) part of the bias becomes smaller and smaller with decreasing $\Delta$ and nearly vanishes if $\Delta$ is chosen so small that no bin contains more than one event. This distributional error is \emph{not} reintroduced by any aggregating smoothing method applied on the pointwise estimates. This effect can be easily observed empirically (not illustrated): for a single large simulated sample from a Hawkes process, we calculate the pointwise Hawkes estimator from Definition~\ref{estimator} with respect to three different bin-sizes $\Delta$. Applying a cubic smoothing-spline procedure on the results we get some function estimates. The bias of these smoothed functions vanishes for $\Delta\downarrow 0$ and, at the same time, their variance does not increase.} We conclude: if the goal of the estimation procedure is a completely specified Hawkes model, then the smallest $\Delta$ that is computationally convenient may be chosen.

 \subsubsection*{Bias correction}
 {After explaining the relation between support and bin-size choice on the one side with cut-off error~\eqref{cutoff_error} and distributional error~\eqref{distributional_error} on the other side,} we next propose a heuristic bias correction idea {that mainly corrects the discretization error \eqref{discretization_error}. The method can be applied in the special (but typical) case when the underlying true excitement function is decreasing and convex.} We demonstrate the idea in the univariate case: up to now, we interpreted the $k$-th entry of  $\hat{\bfH}^{(\Delta,s)}$, i.e., $\hat{h}_k^{(\Delta,s)}$, as an estimate for $h(k\Delta)$, the excitement from an event at some time $t\in\R$ on the conditional intensity at time $t+k\Delta$; see Remark~\ref{cov}. But what $\hat{h}_k^{(\Delta,s)}$ measures is in fact an \emph{aggregation of the excitement from one bin to the $k$-th next bin}. As a consequence, rather than estimating $h(k\Delta)$, the term $\hat{h}_k^{(\Delta,s)}$ estimates more an average of the form
$$
a_k:=\frac{1}{2\Delta}\int\limits_{(k-1)\Delta}^{(k+1)\Delta} h(t)\d t,\quad k = 1, 2, \dots, p.
$$
For convex functions $h$, $a_k$ is always larger than $h(k\Delta)$. If at the same time, $h$ is decreasing, we have $a_k\approx h\left(\left(k-0.5\right)\Delta\right)$. Following these heuristics, a large part of the discretization error can be corrected by shifting the estimation grid of the excitement function by 0.5$\Delta$ to the left. In other words, if we interpret $\hat{h}_k^{(\Delta,s)}$ as an estimator for $h\big((k-0.5)\Delta\big)$, this typically corrects a large part of the bias. {This bias correction may be useful if we have to choose $\Delta$ quite coarse.}
 
\subsubsection*{{Computational issues}}  If we choose a very small bin-size $\Delta$, computation time becomes an issue. 
The calculations in \eqref{estimator_calculation} require the construction of the design matrix $\bfZ$ from Definition~\ref{CLS} with about $T/{\Delta}$ rows and about $d\cdot s/\Delta$ columns. Here, $T$ is the size of the time window, $d$ is the dimension of the process, and $s$ is the support parameter of the estimation. Then the matrix $\bfZ\bfZ^\top$ has to be inverted. This square matrix is approximately of size $\lceil d\cdot s/\Delta\rceil\times \lceil d\cdot s/\Delta\rceil$. In short, the smaller $\Delta$, the larger the matrices involved. Note, however, that, for a very small bin-size $\Delta$, 
the corresponding design-matrix is very sparse. Specialized software makes construction and manipulation of sparse matrices numerically efficient; see \citet{maechler12}. {
\par Note that if $d$ is large, we might not be able to choose $\Delta$ small enough which leaves significant bias in our estimates. In this case, we propose the following two-step procedure: first we choose a too large (but computationally feasible) bin size $\Delta_1$. With respect to this preliminary $\Delta_1$, we calculate the estimates of all branching coefficients including confidence bounds with respect to some significance level $\alpha$; see Section~\ref{simulation_study}. If the confidence intervals include zero, that is, if the corresponding excitement is not significant, we assume that there is no excitement at all. Given that the underlying true `excitement-graph' is sparse, the `excitement-graph estimate' will typically also be sparse. This reduces the dimensionality of the problem remarkably. The reduction allows to reestimate the remaining excitements in a second step \emph{with respect to a much smaller bin-size $\Delta_2$}. In this sense, the bin-size can be applied as a parameter controlling the computational complexity of the method. This two-step procedure is worked out and demonstrated in \citet{kirchner16a}. Also note that the calculation of the covariance matrix estimate \eqref{cov_estimator} can be computationally even more challenging than the calculation of the estimator itself. For that reason, we also provide particularly efficient ways for the calculation of the variance of the branching coefficient estimates in the cited paper.}

{\subsubsection*{Choosing $\Delta$ small enough} 
 We now understand that the trade-off related to the bin-size choice is not so much a bias/variance trade-off but rather a bias/computational-issues trade-off!} To check if we have chosen $\Delta$ small enough, we propose to calculate the (biased) estimate of the baseline intensity vector $\bfeta:=\left(\eta_i\right)_{1\leq i \leq d }$ for a decreasing sequence of bin sizes $\Delta_0>\Delta_1>\Delta_2>\dots$\;
 \begin{center}
\begin{figure}
\includegraphics[width = \textwidth]{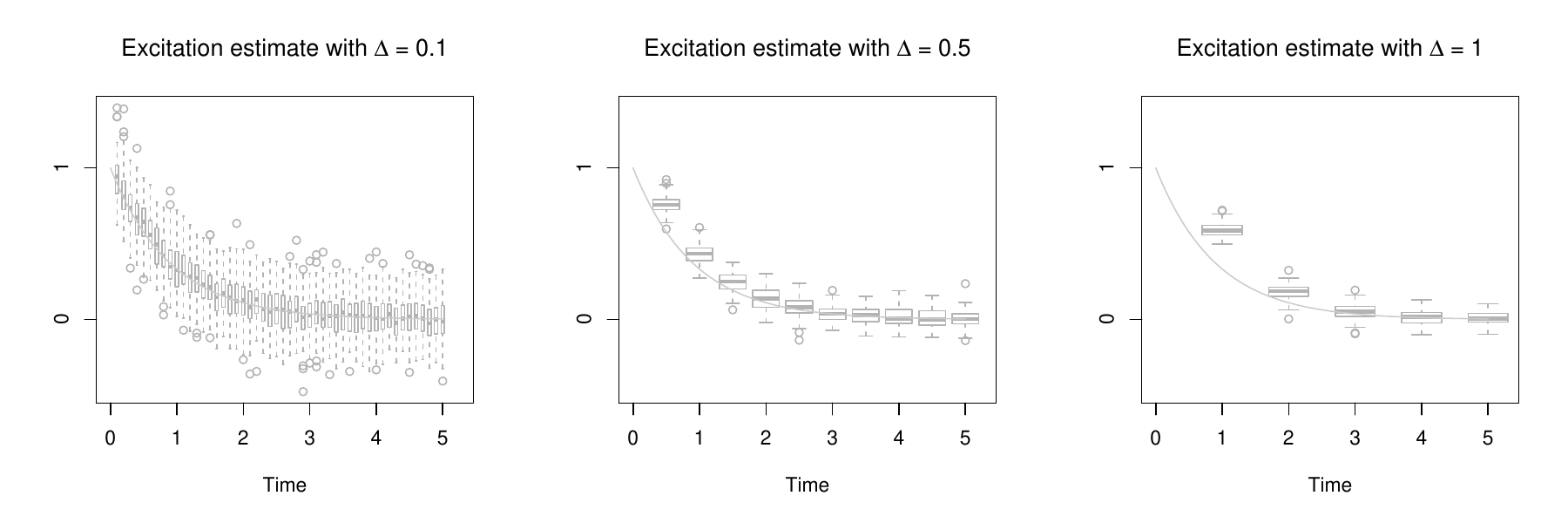}
\caption{Illustration of the bias/variance trade-off in the choice of the bin size $\Delta$; see Section~\ref{choice_of_bin_size}. We simulate 100 realizations of a Hawkes process. For each of these 100 samples, we calculate the estimator from Definition~\ref{estimator} with respect to three different bin-sizes $\Delta\in\{0.1,0.5,1\}$. The estimates are collected in boxplots. The grey lines denote the true excitement function $h(t) = \exp(-1.1t)$. A larger $\Delta$ leads to a larger bias. This is particularly obvious in the first boxplot of the right panel. Note that the bin sizes had to be chosen quite coarse to make this bias visible. A smaller $\Delta$ leads to larger pointwise variance of the excitement function value estimates.}
\label{fig5}
\end{figure}
\end{center}

%, e.g., starting with the preliminary $\Delta_0$ from the support analysis in Section~\ref{choice_of_support}. 
The variance of the {baseline-intensity} estimates $\hat{\eta}_{i}^{({s,\Delta_n})},\,{n=0,1,\dots}$ is approximately constant over the different bin-sizes; see Figure~\ref{fig6}. This makes the estimates comparable.
For $i = 1,2,\dots, d$, we plot the values $\left(\hat{\eta}_{i}^{({s,\Delta_n})}\right)_{n=0,1,\dots}$ against $\left(\Delta_n\right)_{n=0,1,\dots}$. Typically, one observes a monotone convergence in $n$ to some constant (or $d$ constants for $d>1$). 
%Now, we choose $n$ so large that all the baseline estimates have visibly converged. 
Plotting confidence intervals around the point estimates indicates when the bias is negligible in comparison to the random noise of the estimate. We will apply this method in the concluding data-example.

 \subsection{Diagnostics} \label{diagnostics}
We see a certain danger in the application of our nonparametric Hawkes estimator from Definition~\ref{estimator}. Reasonable graphical results as in Figure~\ref{fig2} might be used as an argument in favor of the Hawkes process as the true model. But this conclusion would be a misuse of the method. In fact, the proposed estimator depends only on second-order properties of the data. So, we have to expect that there is a whole family of point processes that generate the same excitement estimates, although only one of these processes is a genuine Hawkes process. As an example, consider a continuous-time, nonnegative, stationary Markov chain that has the same second-order properties as some given Hawkes process. We use this Markov chain as a stochastic intensity for another point process; see \citet{daley03}, Example 10.3(e). The resulting doubly-stochastic point process is a point process with different distributional properties than the corresponding Hawkes process. But our estimator will still yield the same results in both cases. As another example, consider a time-reversed Hawkes process. Clearly, this is not a Hawkes process anymore. However, the time-reversed version has the same autocovariance density as the original process and therefore our estimator will again yield the same result.
\par This means, the application of our estimation approach always ought to be followed by a model test. A most common basis for such a test in our context is a multivariate version of the random time-change theorem for point processes; see \citet{meyer71, brown88}: for points $\big(T^{(i)}_{k}\big)_{k\in\Z} ,\, i = 1,\dots,d,$ from a $d$-variate point process with conditional intensity $\Lambda = \left(\Lambda^{(i)}\right)_{i=1,\dots ,d}$, one has that
$\int_{T^{(i)}_{k}}^{T^{(i)}_{k+1}} \Lambda^{(i)}\left(t\right) \mathrm{d}t {\sim}\operatorname{Exp}(1)$ independently over $i= 1,\dots,d$ and $k \in\Z .$
So, after having fit the Hawkes process to point process data, we calculate the corresponding conditional-intensity estimate and time-transform the interarrival times. These transformed interarrival times ought to be compared with theoretical Exp(1)-quantiles in a QQ-plot. Next to this graphical method one ought to apply a Kolmogorov--Smirnov test and an independence test to the transformed interarrival times. 

%\par In any case, we believe that the Hawkes process is quite flexible in approximating other models like the regime-switching model described above or also shot-noise processes. In other words, even if the resulting $p$-values make the Hawkes model plausible, we recommend to be careful with immigrant/offspring or causal interpretations; see the discussion in Section~\ref{interpretation}.

\section{Data application}
%The estimation procedure for the Hawkes process from the previous sections supplies useful results for relatively large samples only. The data set has to contain at least several hundred points. Otherwise the variance of the estimates is too large.
 There are two contexts of growing importance where large event-data sets 
% of this size (and much larger) 
are not the exception but 
the rule: internet traffic and high-frequency data in financial econometrics. 
The paper concludes with an exemplary application of the estimation procedure to  the latter.

\subsection{The data}\label{data}
\begin{figure} \center
\includegraphics[width = 0.8\textwidth]{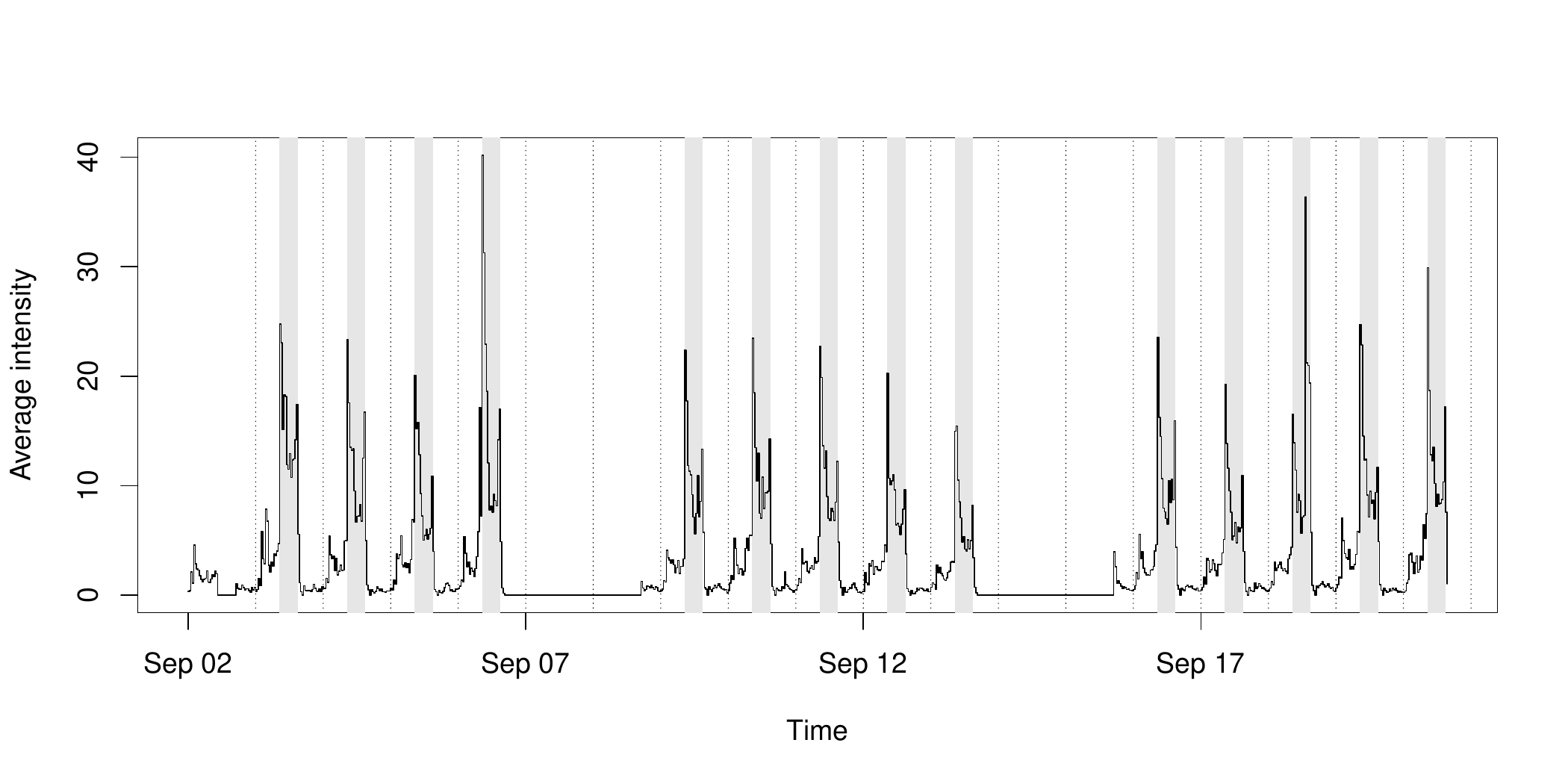}
\caption{
{
Illustration of our data set; see Section~\ref{Bivariate_estimation}. The time is Chicago local time. The dotted vertical lines refer to midnights. The black solid line shows the average number of order-book events per second in $30\min$ windows, as explained in Section~\ref{data}. For our estimation, we only consider the regular trading hours, that is, 8:30am--3:00pm, Chicago time (grey stripes). Inside these stripes, we observe the characteristic U-shape of the intensity. Also note the two preceding smaller U-shapes. These correspond to the regular trading hours of the exchanges in Europe and Asia. The vanishing activity on the first Monday of our data is due to a US-holiday (Labor Day) on September 2, 2013.
 }
} \label{unconditional_intensity}
\end{figure}
The data we use stem from the \emph{limit order book (LOB)} of an electronic market. LOBs match buyers and sellers of a specific asset. We will consider a certain future contract. Whoever wants to buy or sell one or several of these contracts has to send his or her orders to the LOB. An order basically consists of two pieces of information: it names (a) the maximal (respectively, minimal) price at which the sender is willing to buy (respectively, to sell) and (b) the desired quantity in terms of numbers of contracts. If the order is matched to another order, the trade is executed. Such orders that immediately find counterparts are called \emph{market orders}. All other incoming orders are stacked in the LOB; these are called \emph{limit orders}. Limit orders either wait for getting executed by a new incoming matching (market) order or---and this happens relatively often---they are withdrawn after some time. The empirical process of time points when orders arrive we call \emph{order flow}. Such an order flow can be modeled by a point process. In particular, our estimation method from Definition~\ref{estimator} allows to analyze the order flow
in a Hawkes setup. For a detailed survey of order-book quantitative analysis; see \citet{gould13}. 
Financial intraday histories are attractive for econometric research as there is so much data available. 
However, by the very differing data qualities, results are sometimes hard to compare. 
To clarify our starting point, we explain the context and the preparation of the data quite detailed. 
\par We consider a sample of the LOB of E-mini S\&P 500 futures with most current maturity.
The enormous liquidity makes the data attractive for quantitative analysis. 
Samples of these particular data have also been analyzed in the Hawkes setting, e.g., by 
\citet{filimonov12} and 
\citet{hardiman13}.
Our particular data sample was provided by TickData inc. It stems from September 2013. We have a separate data set for quotes and for trades. 
A new entry in the quotes data corresponds to one of the following three events:
\begin{itemize}
\item[(i)] Arrival of some (not marketable) limit order
\item[(ii)]Arrival of some market order, i.e., a trade takes place
\item[(iii)] Cancellation of some limit orders
\end{itemize}
In the trade data set, we see the traded price and the number of contracts traded. 
In both data sets, we observe ties, i.e., multiple events with identical millisecond time-stamps. These ties require special consideration as our model, the Hawkes model, does not allow for simultaneous jumps.
As data is so relatively sparse, the multiple events cannot be accidental. This leaves two possibilities: either the multiples stem from a single order that has been split (for some technical reason) or the multiples are almost instantaneous responses to each other that are reported at the same millisecond due to rounding. {We} had the opportunity to compare our data with a snapshot of the fully reconstructed LOB. This complete data provide `match tags'  for each order. This additional information shows that nearly all multiple events are in fact orders from one single market-participant. This confirms our point of view. 
We therefore consider each time stamp in the data sets only once.
After the reproduction procedure, we derive two one-dimensional event data sets from our data: 
\begin{itemize}
\item the \emph{trade data} $\mathcal{T} 
$ and
\item the (pure) \emph{limit-order data} $\mathcal{L}$ that collects all the times when a new non-marketable limit order has arrived or a limit order has been canceled. 
\end{itemize}

\FloatBarrier

In regular trading hours, i.e., between 8:30am and 3:15pm (Chicago local time), we observe
about 5 events per second in the trade data $\mathcal{T}$, and about 12 events per second in the limit-order data $\mathcal{L}$. At Chicago night time, all of these average intensities are up to twenty times smaller. All interarrival-times processes exhibit significant autocorrelation at large lags. This rules out simple standard homogenous Poisson point processes as models as well as other renewal processes. On the other hand, the autocorrelation may also stem from nonstationarities in the underlying true model; see \citet{mikosch00}. 
 \subsection{Bivariate estimation of the market/limit order process}\label{Bivariate_estimation}

With our nonparametric method from Definition~\ref{estimator}, we fit a bivariate Hawkes process $(N^{(\mathcal{T})}, N^{(\mathcal{L})})$ {on $30\min$-samples of the data $(\mathcal{T} ,\mathcal{L})$ in the first three weeks of September 2013. We first illustrate our approach in detail for} a single $30\min$-sample, namely on data from Friday, 2013/09/06, 10:00am--10:30am (Chicago time). In this specific sample, we observe about 20\,000 trades and 40\,000 limit orders. Our estimation procedure from Section~\ref{hawkes_estimator} depends on a choice of support and on a choice of bin size. For a sensible choice of these parameters, we apply the methods from Sections~\ref{choice_of_support} and~\ref{choice_of_bin_size}:

\subsubsection*{{Choice of support}}
 \begin{figure} \center
\subfigure[Support analysis with respect to a very coarse preliminary bin-size $\Delta_0 = 1\,\mathrm{sec}$; see Section~\ref{choice_of_support}. The estimator from Definition~\ref{estimator} is calculated for different support candidates (in seconds). The corresponding AIC-values are calculated as in \eqref{AIC}. We establish a quite short AIC-optimal support of the excitement function. ]{\includegraphics[width = 0.3\textwidth]{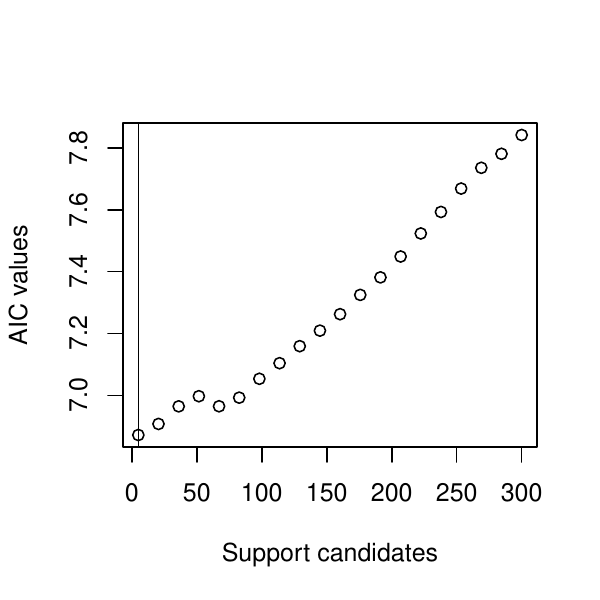} \label{td1a}} \hfill
\subfigure[After the rough analysis illustrated in Figure~\ref{td1a}, we repeat the procedure for smaller support candidates and with respect to a much finer bin-size $\Delta = 0.01\,\mathrm{sec}$. We find an AIC-optimal support value of about 2.7 seconds. This value is stable over other choices of the bin-size. The attained minumum is remarkably clear-cut compared to the attained minimum in the simuation study illustrated in Figure~\ref{fig4}.]{\includegraphics[width = 0.3\textwidth]{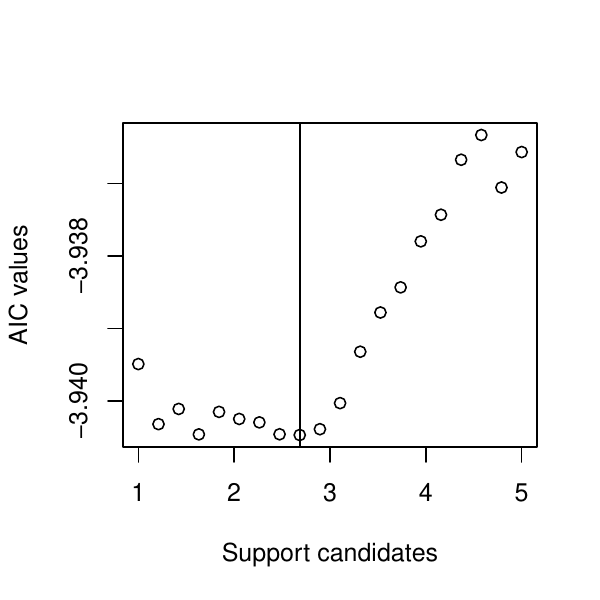}\label{td1b}}
\hfill
\subfigure[Bin-size analysis following the method from Section~\ref{choice_of_bin_size}. The baseline estimates decrease in both components as the applied bin-sizes decrease. For $\Delta$ smaller than $0.01\,\mathrm{sec}$, the decrease is of a lower magnitude than the 95\%-confidence intervals. We conclude that, for $\Delta \leq 0.01\,\mathrm{sec}$, the bias of our estimation method becomes negligible.]{\includegraphics[width = 0.3\textwidth]{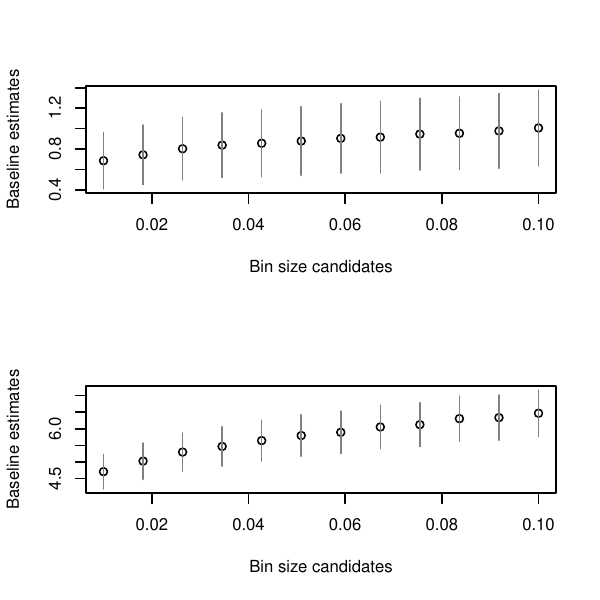}\label{td1c}}
\caption{Preliminary analysis for the bivariate data example $(\mathcal{T},\mathcal{L})$ (trades/limit orders); see Section~\ref{Bivariate_estimation}. Our nonparametric Hawkes estimator from Definition~\ref{estimator} depends on a support parameter $s$ and a bin-size parameter $\Delta$. Applying the selection methods 
from Section~\ref{choice_of_support} and Section~\ref{choice_of_bin_size}, we find that
$s = 3\,\mathrm{sec}$ and $\Delta = 0.01\,\mathrm{sec}$ are reasonable choices.
}
\label{support_analysis}
\end{figure}
\begin{figure} \center
\subfigure[Bivariate fit with respect to bin size $\Delta = 0.01\,\mathrm{sec}$ and support $s =3 \,\mathrm{sec} $. For the derivation of these estimation parameters; see Figure~\ref{support_analysis}. Eyeball examination reveals local maxima in the lower panels at half seconds. $\log_{10}/\log_{10}$-plots of averaged estimates are given in Figure~\ref{loglog_plots}.
]{\includegraphics[width = 0.98\textwidth]{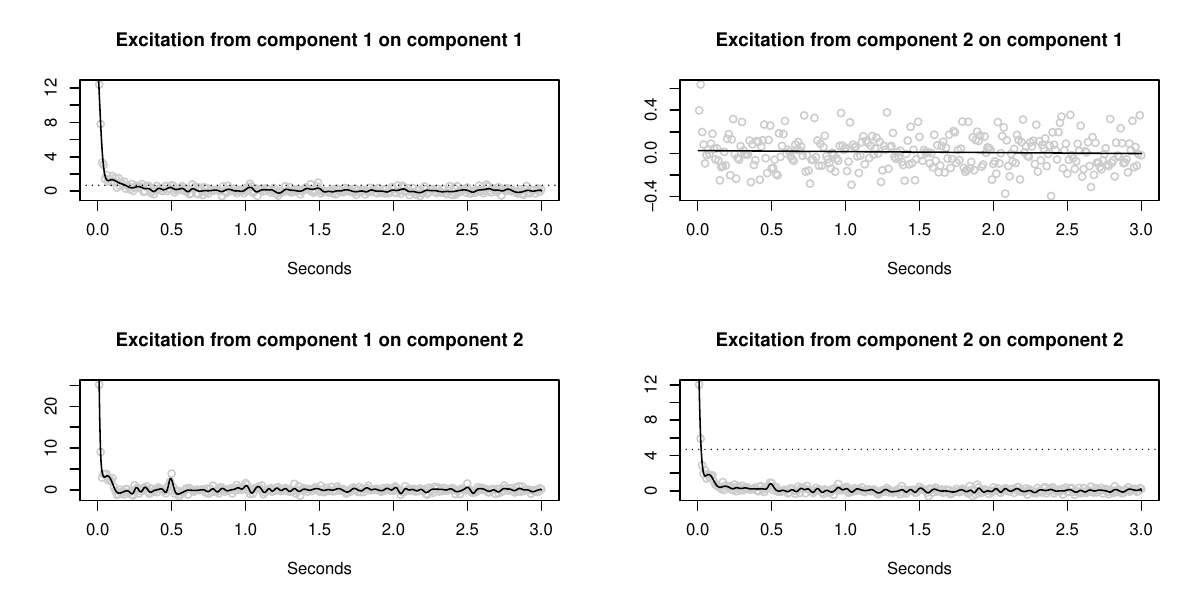} \label{td2a}}
\subfigure[We fit the Hawkes model to the same sample as in (a). This time however, we ignore the best support choice and set it naively to $s = 0.1\,\mathrm{sec}$ only. In addition, we apply an extremely small bin size of $\Delta = 0.002\,\mathrm{sec}$. 
%and examine the nearly instantaneous excitement in the data. 
%(Note, that the original resolution of the raw data is $0.001\,\mathrm{sec}$.) 
In the first milliseconds after each event, the results indicate an inhibitory effect; the Hawkes model does not allow for negative excitement. 
%Still, this result is not surprising. It reflects the fact that it takes at least 3 or 4 milliseconds for any market participant to observe and react to an LOB event. 
In the smoothed function-estimates, we detect a local maxima at $0.01\,\mathrm{sec}$. For  the estimated excitement function of the first component (the limit-order process) we observe further local maxima at multiples of $0.02\,\mathrm{sec}$. 
%Maybe this is a sign, that $0.1\,\mathrm{sec}$ is the `resolution' of some of the algorithms that drive the market. 
%Also note that in this naive fit, the baseline-intensity estimates are much larger than in (a): These large values are a compensation for the too small support choice.
 ]{\includegraphics[width = 0.98\textwidth]{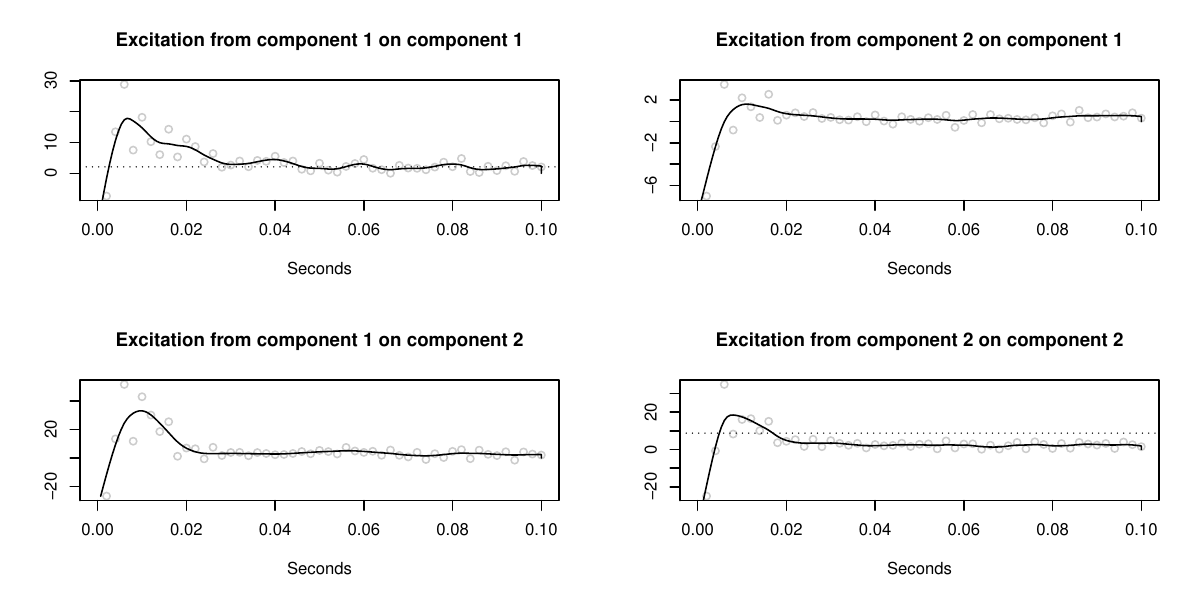}\label{td2b}}
\caption{Exemplary biviariate Hawkes fits on a single $30\min$-window; see Section~\ref{Bivariate_estimation}. We apply two sets of estimation parameters ($s$, $\Delta$).  In the fitted bivariate process, the first component refers to the trade times $\mathcal{T}$ and the second component to the limit order arrivals, respectively, cancellations $\mathcal{L}$. The black solid lines are kernel-smoothed versions of the estimates; see the end of Section~\ref{hawkes_estimator}. The dotted lines in the diagonal plots refer to the fitted baseline-intensity components. 
} \label{td2}
\end{figure}

\par As a first step, we calculate the Hawkes estimator with respect to a relatively large preliminary bin-size of $\Delta_0 = 0.5\,\mathrm{sec}$ and for various support candidates between 1 and 300 seconds. As proposed in Section~\ref{choice_of_support}, we compare the corresponding AIC-values. This coarse analysis shows that the AIC-optimal support is surely less than 20 seconds{; see Figure~\ref{td1a}}. Repeating the analysis with respect to a much finer bin-size $\tilde{\Delta}_0=0.01\,\mathrm{sec}$ on the interval $(0\,\mathrm{sec},20\,\mathrm{sec})$, we find an AIC-minimizing support of about $2.8\,\mathrm{sec}${; see Figure~\ref{td1b}}.
Let us note that the obtained minimum is much more clear-cut than in the controlled simulation study from Section~\ref{choice_of_support} illustrated in Figure~\ref{fig4}.  We set $s = 3\,\mathrm{sec}$. In other words, our support analysis indicates that the process forgets its past after three seconds. This preliminary result is already interesting: it can be interpreted such that---in this sample---the algorithms that drive the market take not more than the last three seconds of the LOB-history into account.
\subsubsection*{{Choice of bin size}} For a reasonable choice of the bin-size parameter $\Delta$, we apply the method from Section~\ref{choice_of_bin_size}. That is, we examine the impact of the bin-size choice on the estimation. We leave the support $s=3\,\mathrm{sec}$ fixed and, for different bin-size candidates $\Delta$, we calculate the baseline-intensity estimate $\hat{\bfeta}^{(\Delta)}_i,\, i=1,2,$ together with the corresponding confidence intervals; see \eqref{estimator_calculation} and \eqref{normal_approximation} for the necessary calculations. We observe a monotone relation between the bin-size candidates and the corresponding baseline-estimates. However,  for $\Delta \leq 0.01 \,\mathrm{sec}$, the differences of the estimates are of a lower order than their (estimated) confidence intervals{; see Figure~\ref{td1c}}. So it is sensible to assume that, for this particular sample, the bias of our estimation method becomes negligible for bin-size choices of $\Delta \leq 0.01\,\mathrm{sec}$.

\subsubsection*{{Estimation results for single time window}} From the bivariate event data set, we finally calculate the Hawkes estimator from Definition~\ref{estimator} with respect to support $s = 3\,\mathrm{sec}$ and bin size $\Delta = 0.01\,\mathrm{sec}$. Figure~\ref{td2a} summarizes the estimation results for this specific time thirty minute window.

 The baseline intensity of the limit-order process $\mathcal{L}$ is about four times larger than the baseline intensity of trades process $\mathcal{T}$. In both processes, we observe a strong and quite similar selfexcitement. The crossexcitement, however, is obviously directed:  we observe a very strong crossexcitement from $\mathcal{T}$ on $\mathcal{L}$, but hardly any effect from $\mathcal{L}$ on $\mathcal{T}$. The estimated interactions can be summarized in the branching-matrix estimate
\begin{align}
  \left(\begin{array}{cc}
  0.62 (\pm0.04) & 0.03 (\pm 0.01)\\
  0.55 (\pm0.06)& 0.54 (\pm 0.03)
  \end{array}\right), \quad \text{i.e.,}\quad
  \mathrel{\raisebox{0.43cm}{\text{`}}}
   \left(\begin{array}{cc}
  \mathcal{T} \stackrel{0.62}{\rightsquigarrow} \mathcal{T} &\mathcal{L}\stackrel{0.03}{\rightsquigarrow}  \mathcal{T} \\
 \mathcal{T} \stackrel{0.55}{\rightsquigarrow}  \mathcal{L} & \mathcal{L}\stackrel{0.54}{\rightsquigarrow}  \mathcal{L} 
  \end{array}\right)
   \mathrel{\raisebox{0.43cm}{\text{'}}}.
  \label{bm}
\end{align}

 See Remark \ref{cov} for the calculation of the point estimates as well as the 95\%-confidence bounds of the branching-matrix components. Also see the explanations after \eqref{stability_criterion} for the interpretation of the branching-matrix that is indicated in the right matrix. If the underlying true excitement functions are heavy-tailed, then the estimates \eqref{bm} may extremely depend on the choice of the support parameter $s$; see the simulation study described after~\eqref{power_law}. This means, that these kinds of estimates have to be interpreted (and communicated) together with the chosen value for $s$. In any case, the values and confidence bounds are a good description for the local excitement, i.e., for the influence on the intensities from the past $s$ time units. The largest eigenvalue of matrix \eqref{bm}, i.e., the  stability-criterion estimate, is 0.72. 
% To our best knowledge, even if the joint distribution of all the matrix estimate entries were perfectly normal, it would not be possible to derive the distribution of the largest eigenvalue. So we can only give confidence bounds for the stability criterion in the univariate case. 
The strong asymmetry in~\eqref{bm} may be interpreted such that the trades cause the limit orders (and cancellations) and not vice versa. In further analysis, we found that the estimated branching-matrix, and in particular the asymmetric crossexcitement, is quite stable over all thirty minute windows of the regular trading hours (not illustrated). {Clearly, the knowledge of the distribution of the entries in \eqref{bm} is attractive beyond confidence intervals: it allows testing for causal connections between event streams. This is particularly important in higher dimensional point-process networks; we demonstrate this possible application of our estimation method in \citet{kirchner16a}---with special hindsight on computational issues and implementation. We will also apply this testing-idea to much larger data sets of limit order book data in \citet{kirchner16b}}. In the crossexcitement from $\mathcal{T}$ on $\mathcal{L}$, we observe local maxima at half and whole seconds. This effect may have two causes:  it reflects a preference either for absolute or for absolute round times. To put it differently: some of the order-sending algorithms that indeed react on trade events may have an implemented lag of half or full seconds.
 \par
 In a second approach, we fit the Hawkes model to the same sample as above{; see Figure~\ref{td2b}}. This time however, we ignore the best support choice and set it naively to $s = 0.1\,\mathrm{sec}$ only. In addition, we apply an extremely small bin-size of $\Delta = 0.002\,\mathrm{sec}$. 
In the first milliseconds after each event, the results indicate an inhibitory effect; the {standard linear }Hawkes model does not allow for negative excitement because it could yield negative intensities with positive probability. In the smoothed function-estimate of the selfexcitement of the first component (the trades process), we detect local maxima at $0.02\,\mathrm{sec}$ multiples. Also note that in this naive fit, the baseline-intensity estimates are much larger than in the first fit: these large values are a compensation for the too small support choice.
\par
Naturally, the fitted Hawkes model is only completely specified when we smooth the results from the estimation method on the grid by some kind of smoothing mechanism that yields a function $\hat{H}:\ \R_{\geq 0}\to\R^{2\times 2}$.
%; compare with Section~\ref{trade-off}
We do this with a cubic smoothing spline method. Having thus completely specified the model, we apply a  Kolmogoroff--Smirnov test on the transformed interarrival-times; see Section~\ref{diagnostics}. The test rejects the fitted model for the $30\,\mathrm{min}$-window. This is not surprising: given the very large sample-size, we are very likely to include `abnormal' interarrival times that our model cannot catch; the Kolmogoroff--Smirnov test is particularly sensitive to such outliers. Dividing the $30\,\mathrm{min}$-windows into smaller samples of 100 events yields plausible \emph{p}-values (not illustrated). For further interpretation of the diagnostics; see the discussion in Section~\ref{interpretation} below.
%\subsubsection*{{Estimation results for complete data set}} 
%
%{We divide the whole sample week into data windows of length $30\,\mathrm{min}$ and fit bivariate Hawkes models in the same way as above for all of the ... windows. In Figure ... we illustrate the results:
%baseline intensities over trading days, branching coefficients over trading days. Estimated supports over trading day. Average excitement function over trading day (log/log plot).}
\FloatBarrier

\subsubsection*{{Estimation of whole data set}}

\begin{figure} \center
%\subfigure[asdf]{\includegraphics[width = 0.98\textwidth]{rev8} \label{rev7}}
%\subfigure[asdf]{\includegraphics[width = 0.98\textwidth]{rev9} \label{rev9}}
%:
{\includegraphics[width = 0.8\textwidth]{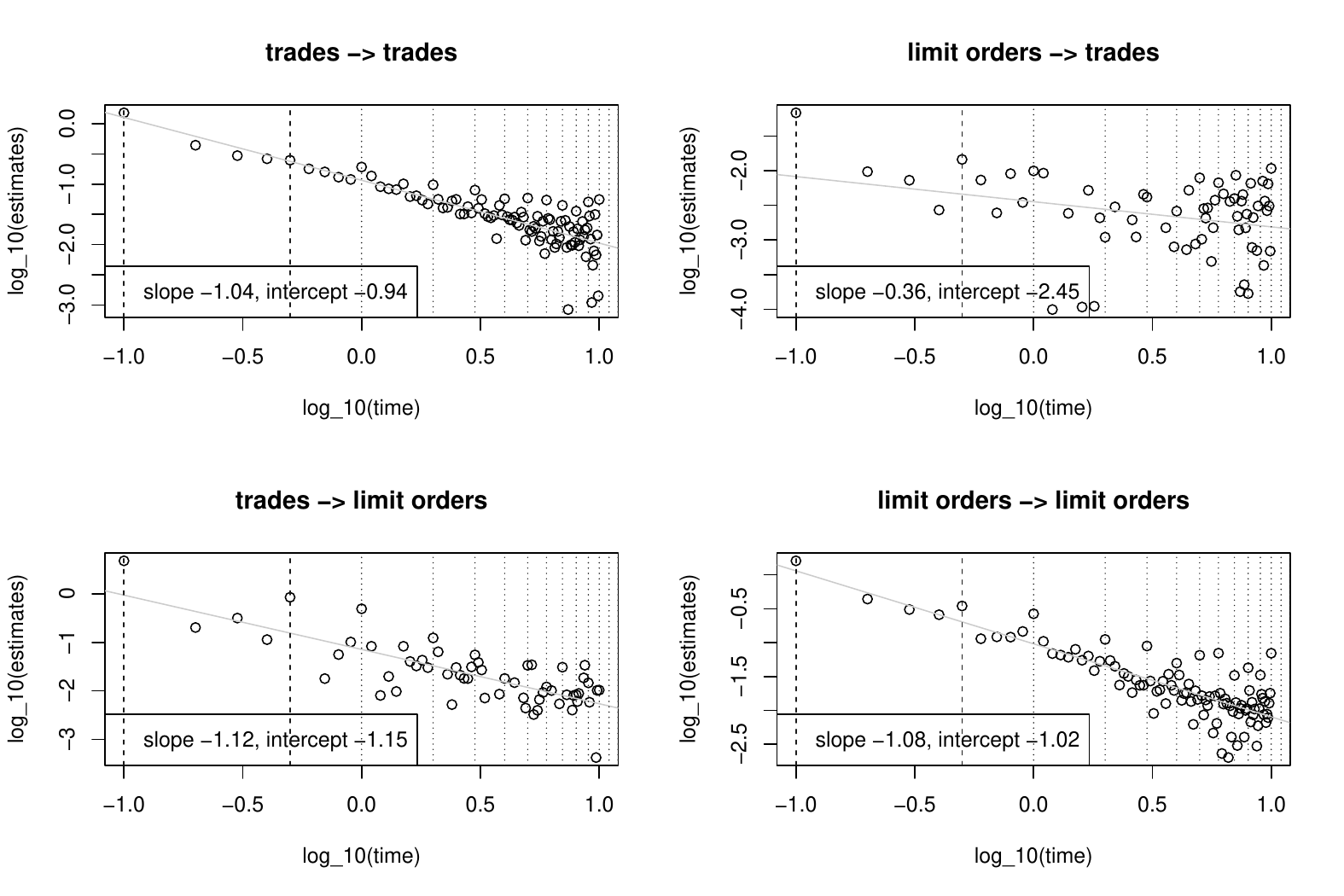} }
\caption{{Log/log plots of the exitement estimates averaged over the regular trading hours of the three week data sample; see Section~\ref{Bivariate_estimation}. The dotted vertical lines refer to seconds.
Apart from the upper-right panel ($\mathcal{L} \rightsquigarrow \mathcal{T}$),
the plots are very well linearly approximated. In the same three excitements we observe second periodicities (dotted lines) as well as peaks at $0.1\sec$ and $0.5\sec$ (dashed lines). Note that due the noise in the original estimates, we have some estimates $< 0 $ (especially for large times); these estimates are ignored in the log-transformation. This introduces a bias for the slope estimate: the true decay is presumably faster than the slope estimates indicate.}}\label{loglog_plots}
\end{figure}
{Next, we consider the whole data set; see Figure~\ref{unconditional_intensity}. More specifically, we consider thirteen $30\min$-windows in the regular trading hours of the fourteen considered trading days. The support analysis of the windows yields AIC-optimal supports between 2 and 50 seconds, the majority (and the median) being approximately 10 seconds. We first calculate the Hawkes estimator from Definition~\ref{estimator} with respect to $s= 10\sec$ and $\Delta =  0.01\sec$. We average the estimates over all $13\cdot 14 = 182$ time windows. The average branching matrix is
$$
  \left(\begin{array}{cc}
  \mathcal{T} \stackrel{0.64}{\rightsquigarrow} \mathcal{T} &\mathcal{L}\stackrel{0.02}{\rightsquigarrow}  \mathcal{T} \\
 \mathcal{T} \stackrel{0.75}{\rightsquigarrow}  \mathcal{L} & \mathcal{L}\stackrel{0.59}{\rightsquigarrow}  \mathcal{L} 
  \end{array}\right)
 $$
which is quite similar as in the single-window analysis \eqref{bm}. In particular, we observe a similar asymmetry in the crossexcitements. The log/log-plots of the estimates indicate that the excitement functions decay with a power-law with decay parameters close to 1; see Figure~\ref{loglog_plots}. The log/log-plots of the results exhibit strong second-periodicities. Also note that there are local maxima in the excitements after $0.1\sec$ and $0.5\sec$. We  checked the absolute event-times in our data set for (absolute) round-time preferences. Statistical tests indicate such preferences. But we think that they are too weak to explain the strong periodicities in the excitement.

\begin{figure}
\center
{\includegraphics[width = 0.8\textwidth]{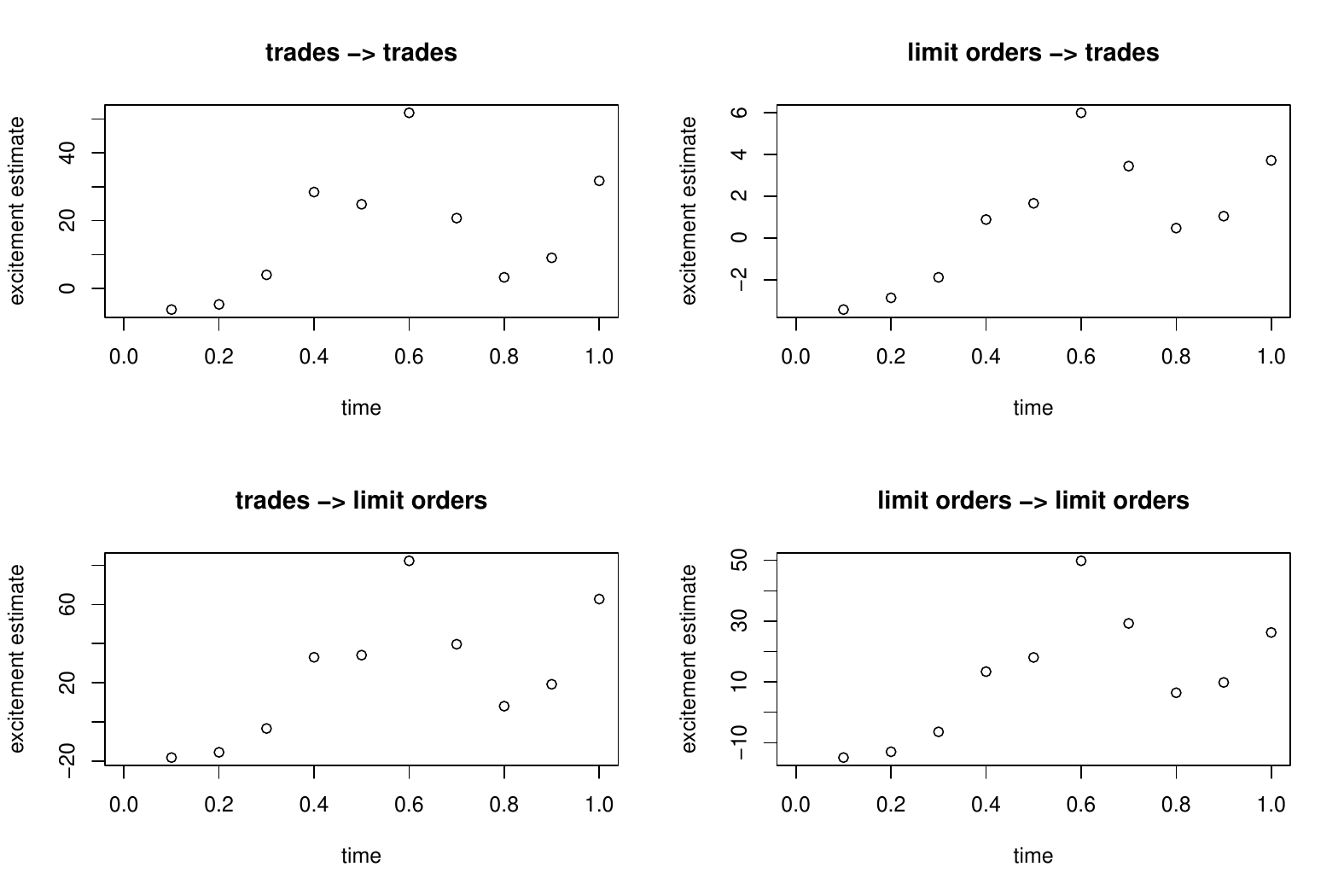} }
\caption{{Average instantaneous excitement over the 182 considered 30min-windows. We fit the Hawkes model naively with a support parameter $s = 0.01\sec$ and a super-small bin-size of $0.001\sec$. Note that the resolution of our original data is milliseconds, so there is no data aggregation involved anymore. Consequently, it might be more appropriate to speak of an INAR(10) process rather than a Hawkes process. The shapes of the four estimated excitements are remarkably similar: there are negative values up to $0.003\sec$, a maximum at $0.006 \sec$, and another local maximum at $0.01\sec$. The excitement delay is presumably related to the \emph{high-frequency cut-off} as discussed in \citet{hardiman13}.}
} \label{ultrashort_excitement}
\end{figure}

\par Then---just like for the single-window analysis above---we study the instantaneous excitement in more detail by setting $s$ as small as $0.01$ and $\Delta$ as small as $0.001\sec$---which is the resolution of the original data. This very short excitement looks quite noisy (e.g. not monotone) on the one hand, on the other hand, it seems remarkably similar for all four excitements: we observe negative values up to $0.002\sec - 0.003\sec$,  a maximum in $0.006\sec$, and another local maximum at $0.01\sec$. This delay in the excitements may be a manifestation of the \emph{high-frequency cut-off} discussed in \citet{hardiman13} or the \emph{average latency} discussed in \citet{bacry14a}. We think that the observed inhibitory effects are statistical artifacts that are a compensation for some misspecification of the model. It would be interesting to examine this ultra-short behavior in more detail. Also note that when we get that close to the resolution of the data it might be more appropriate to apply the discrete-time INAR model directly instead of using the continuous time Hawkes model.
\par We also examine the development of the model over the day. To that aim, we illustrate  the branching-coefficients estimates and the average baseline-intensity estimates over the fourteen 8:30am--9:00am time windows, over the fourteen  9:00am--9:30am time windows, and so on. The estimates are calculated as proposed before, that is, with respect to $s = 10\sec$ and $\Delta = 0.01\sec$. The branching-coefficient estimates are quite stable over the regular trading hours---with outliers in the time windows right after the opening and right before the daily trading-stop (at 3:15pm). The average baseline-intensity exhibits the typical U-shape of the empirical intensity from Figure~\ref{unconditional_intensity}.
\par
Note that---due to the power-law shape of the excitement---choosing a larger support-parameter will typically yield larger branching-coefficients. In this case, the branching-coefficient estimates from above are somewhat arbitrary (compared to the case of finite or exponentially decaying excitement functions). Still they are meaningful \emph{in combination with the applied $s = 10\sec$}: an estimated branching-coefficient may be interpreted as the number of (direct) offspring of an event \emph{in the next $10$ seconds}; respectively, it measures the influence of an event on the corresponding intensity component in the next $10$ seconds. 
\FloatBarrier
\subsubsection*{Semiparametric fit of power-law parameters}
 An alternative way to infer the branching coefficients would be to conclude from Figure \ref{ultrashort_excitement} that the model is governed by a parametric power-law function of the form
$
h(t) = \beta 1_{t \geq \varepsilon} t^{-\alpha}
$
for some very small $\varepsilon>0$ and $\alpha >1$. If slope and intercept are estimated coefficients from the linear model explaining the $\log_{10}$-transformed estimates with the corresponding log-transformed times, then we have that $\alpha\approx-\text{slope}$ and $\beta\approx 10^{\text{intercept}}$. Consequently, we get
\begin{equation}
\int_0^\infty h(t) \d t = \frac{\beta}{\alpha - 1} {\varepsilon^{1 - \alpha}}\approx 
-\frac{10^{\text{intercept}}}{1 + \text{slope}}\varepsilon^{1 + \text{slope}}.\label{bm_est}
\end{equation}
In principle, we can read-off the high-frequency cut-off $\varepsilon$ from Figure~\ref{ultrashort_excitement}.  Say we set $\hat{\varepsilon}= 0.003$. If we plug in estimates for slope, intercept, and high-frequency cut-off, we obtain a semiparametric estimate for the branching coefficients. For the self-excitement of the limit orders in the lower right panel of Figure~\ref{loglog_plots}, for example, \eqref{bm_est} yields a branching coefficient of approximately 1.9. This value indicates a supercritical selfexcitement which would mean that the data-generating process is nonstationary and (in the long run) explosive. This result may indicate that the infinite extrapolation of the excitement is wrong. And even if the true underlying model were truly governed by a power-law excitement, there would be problems with the estimation approach derived from \eqref{bm_est}: 
\begin{enumerate}
\item the (nonlinear) log transformation introduces (additional) bias; 
\item the branching-coefficient approximation \eqref{bm_est} is extremely sensitive on the value of $\varepsilon$, which itself is difficult to estimate;
\item some of the estimated excitement-values $\hat{h}_k, {k = 1,\dots, p}$ might be negative due to noise---especially for large $k$. But then, these negative estimates get lost in the log-transform whereas their positive counterparts survive. This introduces an upward bias for large $k$ and, consequently, the decay that we observe in the log/log representation is typically slower than the true decay.
\end{enumerate}
In view of the above, the estimation approach based on~\eqref{bm_est} only makes sense for very large sample-sizes. In most cases, it is more sensible to estimate power-law parameters directly from original (untransformed) estimates via nonlinear least squares. Consider once more the selfexcitement of the limit orders in the lower-right panel of Figure~\ref{loglog_plots}. Applying nonlinear least-squares optimization to the original estimates (e.g., with {\tt nls()} in {\tt R}) yields a decay-parameter estimate of $1.25$. This indicates a rather faster decay than the one resulting from the linear log/log-fit ($1.08$).
}

\subsubsection*{{Power-law extrapolation vs. truncation}}
{
Next to the estimation issues discussed above, there are conceptual problems with extremely slowly decaying excitement-functions. We have already touched this issue in Section~\ref{choice_of_support}. In the following, we readdress the problem in our specific data-context: the AIC-optimal support-choice in Figure~\ref{support_analysis} is clear-cut. On the other hand, the linearity of the log/log-plot in Figure~\ref{loglog_plots} is also convincing. However, the consequences of infinite excitements as in \eqref{power_law} or \eqref{bm_est} with decay parameters of $\alpha = 1.1$ and lower are hard to interpret: in such a model, a significant part of the offspring happens days or weeks after the point of reference. In the case of \eqref{power_law}, where most of the instantaneous excitement is shifted to the tail, such a low decay-parameter would mean that a large part of the offspring happens only after 30'000 years! These are brave extrapolations when the size of the data window is $30\min$. So typically, one desires larger samples.
%
%
%Also the time windows we want to model---respectively, that we take samples from---are typically assumed to be stationary samples. But the power-law model would need a `burn-in' of ages.
Such long order-book histories, however, contain trading halts, night times, weekends, holidays, and so on. Including these obvious regime-switches in the estimation will typically yield excitement estimates that are even more heavy-tailed. In autoregressive fits, stationary models typically exhibit long-range dependence when calibrated to time windows where there are regime switches in the data; see \citet{mikosch00}. That regime-switches have these effects in the point process context can easily be demonstrated in simulation, e.g., by calibrating a Hawkes model on data from a doubly stochastic Cox process as described in Section~\ref{diagnostics}. Also note that other unobserved covariate-processes might have simililar effects. E.g., when we fit a univariate Hawkes model to data that is a margin of a biviariate Hawkes model, the fit typically also exhibits long-range dependence. These issues make observed long-range dependence as in Figure~\ref{loglog_plots} hard to interpret. The interpretation of the results is finally a question of taste and, more importantly, a question of the application: in view of the argumentation in Section~\ref{choice_of_support}, the truncated model typically suffices if the goal is an algorithm that aims to calculate (as quickly as possible) the likelihood of a specific order-book event given the event-history (e.g., for the implementation of some strategy). Also, if we are mainly interested in the causal structure underlying data (which order flow affects which), the truncated models may suffice. On the other hand, if we are more interested in theoretical results such as connecting the market microstructure to coarser scale (price) processes, volatility estimation, and so on, then it is presumably more fertile to work with the extrapolation of the power-laws.
}

\subsubsection*{{Comparison with earlier results}}
{
The most complete Hawkes-process based limit order book model as of now is \citet{bacry14a}. Here, the authors distinguish the ask and the bid side of the order flow and also take price jumps and cancellations into account. The authors find that it is mainly the price jumps that drive the dynamics of the process. They also report a strong influence from trades on limit orders and less excitement vice versa, which supports our observations. Also the power-law shape with decay parameters around 1 of the excitements is reported in all papers where the Bacry--Muzy nonparametric method is applied to the order flow;
see, e.g., \citet{hardiman13} and \citet{bacry14a}.  As to now, there seems to be no approach for the choice of the support parameter $s$, respectively, $A$ in the BM-method. So the (relatively small) support estimator from our AIC-based selection cannot be compared with earlier results.
}

\begin{figure}
\center
\includegraphics[width = 0.8 \textwidth]{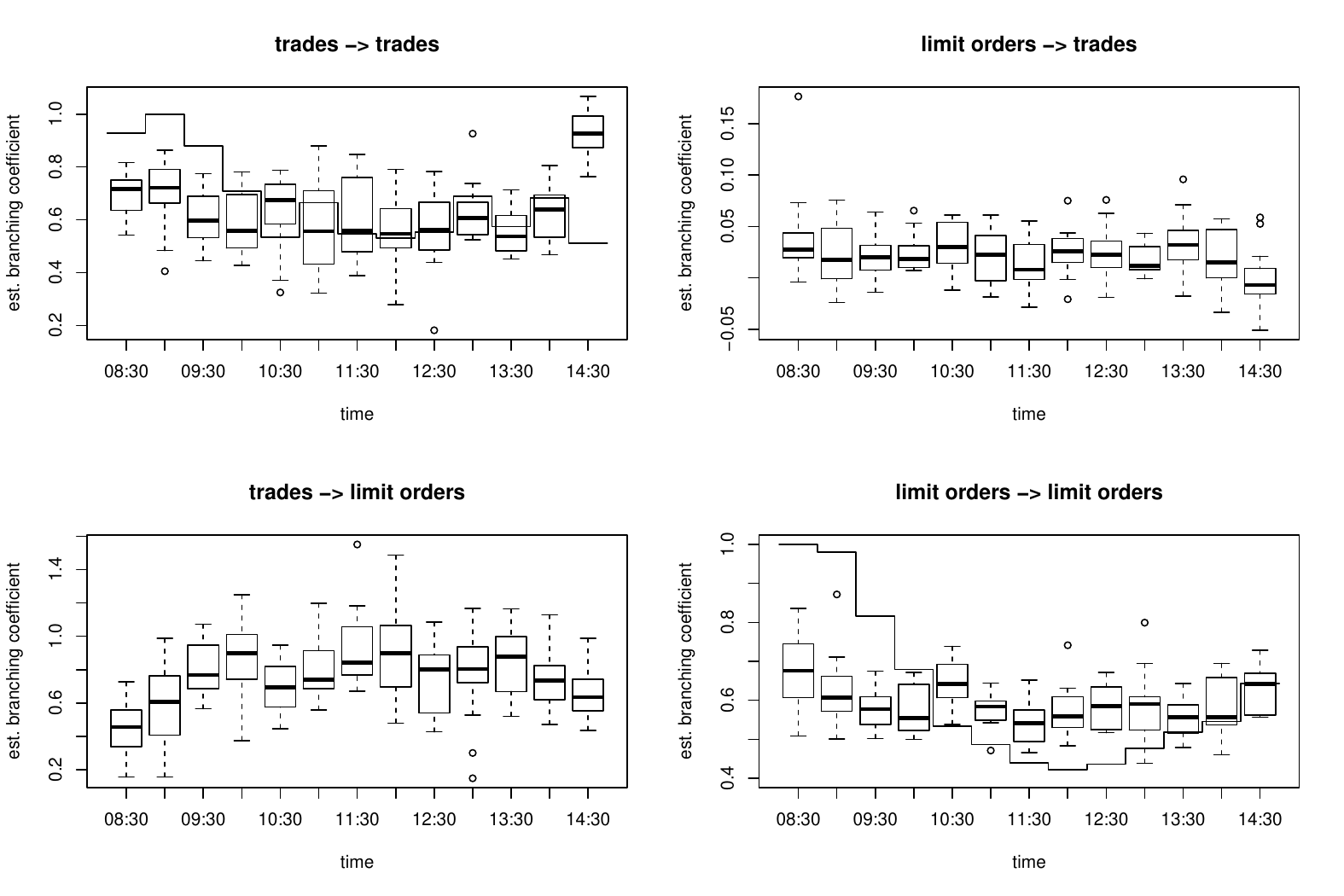}
\caption{Comparison of the estimates over the trading day; see Section~\ref{Bivariate_estimation}. For each of the fourteen sample days, we consider thirteen $30\min$ windows. The boxplots collect the branching-coefficient estimates (left scales); the baseline-intensity estimates are averaged over the corresponding time-windows (step functions). The branching coefficients are relatively stable over the trading day---with outliers at the opening and the close of the CME. The baseline intensity of the market orders reflects the U-shape of the average intensity from Figure~\ref{unconditional_intensity}.}\label{window_by_window}
\end{figure}
\subsection{Interpretation of the estimation results}\label{interpretation}
The interpretation of the estimation results from Section~\ref{Bivariate_estimation} (and of Hawkes fits in general) is not straightforward: observing the arrival of an order makes people (respectively, algorithms) send other orders. 
In this sense, we may expect some quite direct true excitement in LOB data. In the Hawkes  modeling approach however, any fluctuation of 
exogenous processes that influence the observed event-process will also be detected as excitement. {The past of the observed process then serves as a proxy for some unobserved covariate processes}. {E.g., the past price-jumps in \citet{bacry14a} is reported to be the most important `excitor' for the order flows. These price jumps can be seen as a proxy for a change in the state of the book:  updated---typically larger---volume at best bid and best ask, updated imbalance, and so on.} Candidates for other covariate processes in our context are volume, arrival of orders away from the best bid or best ask price, spread, or even data from other assets such as options on S\&P 500 E-mini futures. 
A most natural way to model this situation would be a joint multivariate Hawkes model. 
%This approach, however, brings us to a major theoretical drawback of the Hawkes model: if $\bfN:=\left(N^{(1)},N^{(2)}\right)$ denotes a bivariate Hawkes process with nontrivial crossexcitement, then the projections $N^{(1)}$ and $N^{(2)}$ are in general not Hawkes processes. A more consistent point process model-class would be defined as a margin of a higher-dimensional Hawkes process. 
However, doing statistics with so little knowledge about the state (or even the dimensionality) of the process yields new problems. So the best way to get rid of artificial selfexcitement in the Hawkes model is presumably to make the baseline intensity more flexible. For an example of such a Hawkes model with stochastic baseline-intensity; see \citet{zhao12}. 
%In any case, we believe that such models are a better approximation of reality and would absorb a lot of the observed selfexcitement in the standard Hawkes fit. 
To summarize: our estimation method can indeed detect self- or crossexcitement in data. However, we ought to be careful with interpretation of these terms.
\par{Any} Hawkes fit is meaningful and fertile despite of the criticism above and despite of the vanishing \emph{p}-values in our application: plots of excitement estimates as in Figure~\ref{td2} 
are visualizations of huge event-data sets in a compact and at the same time informative way. In that sense, any Hawkes fit---and our estimation method in particular---can be used as a graphical tool for exploratory event-stream analysis. Furthermore, even if the Hawkes model assumption may be completely wrong for the data-generating point process $\bfN$, an excitement-function estimate $\hat{h}_{ij}(\cdot)$ is still meaningful. It is an estimate for the best linear filter of `$\E\left[N_i(\d t)/\mathrm{d}t| \sigma\left(N_j(\{s\}), s < t\right)\right]$' which is a relevant quantity in all stationary models. 
\section{Conclusion}
This paper demonstrates that applying methods from time series theory to the bin-count sequences of point process data yields a useful and intuitive nonparametric estimation method for the multivariate Hawkes process. The price for the fertile simplicity of the method is a bias due to {various errors} involved in the approximation. Simulation studies support that this bias can be controlled and that it is negligible for most practical means. The technique presente	d depends on the choice of the bin size and the assumed support of the excitement function(s). Methods for a sensible choice of these parameters are given. In any application, the robustness with respect to these choices ought to be studied. {We treat computational issues in high-dimensional cases in \citet{kirchner16a}.  A larger limit-order book application will be given in \citet{kirchner16b}, where we consider an application of our method on Hawkes models with marks and with covariate-dependent baseline intensities.} 

\par Finally, note that in view of the analogy between discrete-time INAR($p$) sequences and continuous-time Hawkes processes, analysts using the Hawkes model may consider to directly apply the INAR($p$) model in the first place---as most event data live on relatively discrete time grids. 
\appendix
\section*{Acknowledgements}  M.K.\ is indebted to Paul Embrechts for guidance and support during the preparation of the paper.
The author acknowledges financial support from ETH RiskLab and the Swiss Finance Institute. Furthermore, M.K.\ thanks Robert Almgren for sharing his expertise on limit-order-book data, Marius Hofert as well as Martin Maechler \citep{maechler12} for support with {\tt R}, Val\'{e}rie Chavez-Demoulin as well as Thibault Vatter for numerous comments on an earlier versions of the paper, and Rita Kirchner as well as Anne MacKay for help with the editing. Incorporating the comments of two anonymous referees made the paper more complete.

\bibliographystyle{apalike}
\bibliography{diss.biblio.bib}

\appendix

\section{Proofs} \label{proofs}
\subsection{Proof of Proposition~\ref{white_noise}} \label{pf_white_noise}
First, we establish that $\bfu_n:= \bfX_n - \bfa_0 - \sum_{k=1}^p A_k\bfX_{n-k},\, n\in\Z,$ defines a white noise sequence. Stationarity of $\left(\bfu_n\right)$ follows from the stationarity of $\left(\bfX_n\right)$.
For the sequel of the proof, fix any $n\in\Z$. 
{
Denote $
\mathcal{F}_n :=
 \sigma \{\bfX_k:k\leq n\}$.
Note that $\E\left[A_k\circledast \bfX_{n-k}| \mathcal{F}_n\right]=A_k \bfX_n,\, A_k\in\R^{d\times d}_{\geq 0},$ and that $\bfepsilon_n$ is independent of  $\mathcal{F}_n$. So we get
\begin{align}
\E \big[\bfu_n |\mathcal{F}_{n -1} \big]=  \E\big[\sum\limits_{k=1}^p A_k \circledast \bfX_{n-k} + \bfepsilon_n -\bfa_0 - \sum_{k=1}^\infty A_{k} \bfX_{n-k}|\mathcal{F}_{n - 1}\big] = 0, \label{cond_exp}
\end{align}
and, consequently, $\E \bfu_n = 0,\, n\in\Z$.
For the autocovariances of the errors, note that, for $n'<n$ (and then, by symmetry, for $n'\neq n$),
\begin{align*}
\E\big[\bfu_n \bfu_{n'}\big]&= \E\Big[\E\big[\bfu_n \bfu_{n'}|\mathcal{F}_{n-1}\big]\Big] =  \E\Big[\bfu_{n'}\underbrace{\E\big[\bfu_n |\mathcal{F}_{n-1}\big]}_{\stackrel{\eqref{cond_exp}}{=} 0}\Big] = 0.
\end{align*}
\noindent Finally, we have that
\begin{align*}
\Cov(\bfu_n) & = \E\Big[\underbrace{\Cov\big(\bfu_n|\mathcal{F}_{n-1}\big)}_{\stackrel{}{=}\Cov(\bfX_n| \mathcal{F}_{n-1})}\Big] + \Cov\Big(\underbrace{\E\big[\bfu_n|\mathcal{F}_{n-1}\big]}_{\stackrel{\eqref{cond_exp}}{=} 0}\Big) 
= 
%\E\big[\diag(\bfa_0 + \sum\limits_{k = 1}^\infty A_k \bfX_{n-k})\big]\\
\diag\Bigg(\bfa_0 +\sum\limits_{k = 1}^p A_k \E\big[\bfX_{n-k}\big]\Bigg)  \\
&= \diag\Bigg(\bfa_0 +\sum\limits_{k = 1}^p A_k \Big(1_{d\times d} - \sum\limits_{k = 1}^p A_k \Big)^{-1}\bfa_0\Bigg) 
=  \diag\Bigg(\Big(1_{d\times d} - \sum\limits_{k = 1}^p A_k \Big)^{-1}\bfa_0\Bigg).
\end{align*}
\begin{flushright}
{ $\square$}
\end{flushright}
}

\subsection{Proof of Theorem~\ref{inference}} \label{pf_inference}
The first part of the proof largely depends on matrix manipulations. So it is important to remind the reader that all vectors are understood as column vectors.
We rewrite the INAR($p$) sequence $(\bfX_k)\subset\N_0^d$ as a standard multivariate linear autoregressive time series with white-noise error sequence $(\bfu_k)_{k\in\Z} := (\bfX_k-\mathbf{a}_0 + \sum_{l=1}^p A_l \bfX_{k-l})_{k\in\Z}$
$$
\bfX_k = \mathbf{a}_0 + \sum\limits_{l=1}^p A_l \bfX_{k-l} + \bfu_k,\quad k\in\Z;
$$
see Corollary~\ref{AR-representation}. Then the distributional properties of the CLS-estimator are derived similarly as in \citet{luetkepohl05}, pages 70--75, where independent errors are assumed.
In the following, let $\bfZ\in\N_0^{(dp+1)\times (n-p)}$ be the design matrix from the CLS Definition~\ref{CLS} with respect to the sample $\left(\bfX_1,\bfX_2,\dots,\bfX_n\right)$.  Furthermore, let
$
\mathbf{U}:=(\bfu_{p+1},\bfu_{p+2},\dots, \bfu_{n})\in\R^{d\times (n-p)}.
$
Note that $\bfZ$ as well as $\mathbf{U}$ depend on $n$.
We work under the assumption that
\begin{align}
&\frac{1}{n-p}\bfZ\bfZ^\top\stackrel{p}{\longrightarrow}:\Gamma\in\R^{{(dp+1)\times (dp+1)}},\quad n\longrightarrow \infty,\quad
\label{A1}
\end{align}
\text{exists and is invertible.}
In addition, we use that, for $n\longrightarrow\infty$,
\begin{align}
 \frac{1}{\sqrt{n-p}} \vec\big(\mathbf{U}\bfZ^\top\big)
 &\stackrel{\d}{\longrightarrow} \mathcal{N}_{{d^2p+d}}\Bigg(0_{{d^2p+d}}, \E\left[\Big(\bfZ_0\otimes 1_{d \times d}\Big)
\bfu_0\Big(\big(\bfZ_0\otimes 1_{d \times d}\big)\bfu_0\Big)^\top \right] \Bigg), \label{A2}
 \end{align}
where
 $\bfZ_0:=  \left(\bfX^\top_{-1}, \bfX^\top_{-2},\dots ,\bfX^\top_{-p},1\right)^\top\in \N_0^{(pd+1)\times 1}$ has the same distribution as any of the columns of the design matrix $\bfZ$.
We postpone the reasoning for \eqref{A2} to the end of the proof. As a first step, weak consistency of $\hat{\bfB}^{(n)}\in\R^{d\times(dp+1)}$ is proven. To that aim, we will use that
\begin{align}
 \bfY := \left(\bfX_{p+1},\bfX_{p+1},\dots,\bfX_{n}\right) &= \bfB\bfZ + \mathbf{U}\left(\in\N^{d\times (n-p)}_0\right);\label{bfY} 
 \end{align}see Definition~\ref{CLS}.
\begin{align*}
 \hat{\bfB}^{(n)} - \bfB  =  \bfY\bfZ^\top \left(\bfZ\bfZ^\top\right)^{-1} - \bfB
\stackrel{\eqref{bfY}}{=}  \left(\bfB\bfZ + \mathbf{U}\right)\bfZ^\top \left(\bfZ\bfZ^\top\right)^{-1} - \bfB
&=  \mathbf{U}\bfZ^\top \left(\bfZ\bfZ^\top\right)^{-1} \\
&=  \frac{\mathbf{U}\bfZ^\top}{n-p}\left(\frac{\bfZ\bfZ^\top}{n-p}\right)^{-1} .
\end{align*}
%:
By \eqref{A1}, the second factor converges in probability to the constant matrix $\Gamma$. By \eqref{A2}, the first factor has the same asymptotic distribution as $\tilde{W}/{\sqrt{n-p}}$ where $\tilde{W}$ is a matrix consisting of jointly normally distributed entries not depending on $n$. So $\tilde{W}/{\sqrt{n-p}}\stackrel{p}{\longrightarrow} 0_{d \times (dp +1)}$ and therefore  $\hat{\bfB}^{(n)} - \bfB\stackrel{p}{\longrightarrow} 0_{d \times (dp +1)}$.
For establishing the asymptotic distribution, we treat the difference of the estimated and true vectorized parameter-matrix in a similar way:
\begin{align}
\vec\left(\hat{\bfB}^{(n)}\right) - \vec\left(\bfB\right)
=  \vec\left(\hat{\bfB}^{(n)} - \bfB\right)
&= \vec\left(\mathbf{U}\bfZ^\top \left(\bfZ\bfZ^\top\right)^{-1}\right)\nonumber\\
%&\stackrel{(R1)}{=}  \left(\left(\bfZ^\top \left(\bfZ\bfZ^\top\right)^{-1}\right)^\top\circledast 1_{d\times d}\right) \vec(\mathbf{U})\\
%&=  \left(\left(\bfZ\bfZ^\top\right)^{-1}\bfZ \circledast 1_{d\times d}\right) \vec(\mathbf{U})\\
%&\stackrel{(R3)}{=} \left(\left(\bfZ\bfZ^\top\right)^{-1} \circledast 1_{d\times d}\right)\left(\bfZ \circledast 1_{d\times d}\right) \vec(\mathbf{U})\\
&= \left(\left(\bfZ\bfZ^\top\right)^{-1} \otimes 1_{d\times d}\right)\vec\left(\mathbf{U} \bfZ^\top\right)\nonumber \\
 &= \frac{1}{\sqrt{n-p}}\left(\left(\frac{\bfZ\bfZ^\top}{n-p}\right)^{-1} \otimes 1_{d\times d}\right)\vec\left(\frac{\mathbf{U} \bfZ^\top}{\sqrt{n-p}}\right)\label{prod}.
  \end{align}
In the third step of the calculation above we use that
\begin{equation}
\vec\left(AB\right) =  \left(B^\top \otimes I\right) \vec\left(A\right), \label{kronecker_rule}
\end{equation} for matrices $A, B$ and identity matrix $I$ such that the calculations are consistent dimensionwise; see A.12 in \citet{luetkepohl05}.
It follows from \eqref{prod} together with \eqref{A1} that
 $\sqrt{n-p} \left(\vec\left(\hat{\bfB}^{(n)}\right)- \vec\left(\bfB\right)\right)$ has the same asymptotic distribution as
\begin{equation}\label{aux2}
\left(\Gamma^{-1}\otimes  1_{d\times d}\right)\vec\left(\frac{\mathbf{U} \bfZ^\top}{\sqrt{n-p}}\right).
\end{equation}
With \eqref{A2}, we then find that the asymptotic distribution of \eqref{aux2}---and therefore of $\sqrt{n-p} \left(\vec\left(\hat{\bfB}^{(n)}\right)- \vec\left(\bfB\right)\right)$---is centered normal with covariance matrix
\begin{align*}
&\left(\Gamma^{-1}\otimes  1_{d\times d}\right) \mathrm{cov}\left(\dlim\limits_{n\to\infty}\vec\left(\frac{\mathbf{U} \bfZ^\top}{\sqrt{n-p}}\right) \right)\left(\Gamma^{-1}\otimes  1_{d\times d}\right)\\
& \stackrel{\eqref{A2}}{=}\left(\Gamma^{-1}\otimes  1_{d\times d}\right) \E\left[\left(\bfZ_0\otimes 1_{d \times d}\right)\bfu_k\left(\left(\bfZ_0\otimes 1_{d \times d}\right)\bfu_k\right)^\top \right]\left(\Gamma^{-1}\otimes  1_{d\times d}\right) .\\
% &\stackrel{(R3)}{=} \left(\Gamma^{-1} \Gamma \otimes  1_d\Sigma\right)\left(\Gamma^{-1}\otimes  1_d\right) \\
%& \stackrel{(R3)}{=} \left(\Gamma^{-1} \Gamma\Gamma^{-1}\right) \otimes \left( 1_d\Sigma 1_d\right)  \\
% &=  \Gamma^{-1}\otimes\Sigma \tag{covariance}.
\end{align*}
We still have to establish \eqref{A2}. To that aim, we rewrite the left-hand side of \eqref{A2} as
\begin{align*}
\frac{1}{\sqrt{n-p}} \vec\left(\mathbf{U}\bfZ^\top\right) &=\frac{1}{\sqrt{n-p}}\vec\left(\left(\sum\limits_{j=1}^{n-p}\bfZ^\top_{j,1}\mathbf{U}_{\cdot, j},\dots, \sum\limits_{j=1}^{n-p}\bfZ^\top_{j, dp+1}\mathbf{U}_{\cdot, j}\right)\right)\\
& = \frac{1}{\sqrt{n-p}}\sum\limits_{j=1}^{n-p}\vec\left(\left(\bfZ_{1,j}\mathbf{U}_{\cdot ,j},\dots,\bfZ_{dp+1,j}\mathbf{U}_{\cdot, j}\right)\right)\\
& = \frac{1}{\sqrt{n-p}}\sum\limits_{j=1}^{n-p}\vec\left(\left(\mathbf{U}_{1, j},\dots,\mathbf{U}_{{d}, j}\right)^\top\left(\bfZ_{1,j},\bfZ_{{2},j},\dots,\bfZ_{dp+1,j} \right)\right)\\
& =\frac{1}{\sqrt{n-p}}\sum\limits_{k=p+1}^{n}\vec\left(\bfu_k \cdot \bfZ_k^\top\right),
\end{align*}
where $ \bfZ_k:= \left(\bfX^\top_{k-1}, \bfX^\top_{k-2},\dots ,\bfX^\top_{k-p},1\right)^\top\in \N_0^{(pd+1)\times 1}$. Note that, for $k\in\{p+1,\dots,n\}$, $\bfZ_k$ is the $(k-p)$-th column of the design matrix $\bfZ$. Now, let $
 \bfw_k:=\vec\left(\bfu_k \cdot \bfZ_k^\top\right)\in\R^{pd^2+d},\, k\in\Z$.
We show that for the sequence $( \bfw_k)\subset \R^{pd^2+d}$, a central limit theorem for vector-valued martingale differences can be applied. Proposition 7.9 from \citet{hamilton94} states that if $\left(\bfw_k\right)\subset\R^{\tilde{d}}$ is such that 
\begin{itemize}
\item[(a)] it defines a vector-valued martingale difference sequence, i.e., there is a filtration $\left(\mathcal{H}_k\right)_{k= p+1, p+2, \dots ,n}$ such that $\bfw_k$ is $\mathcal{H}_k$-measurable and $\E\left[\bfw_k|\mathcal{H}_{k-1}\right] = 0_{\tilde{d}}$,\, $k\in\Z$,
\item[(b)] $\E\left[\bfw_{k}\bfw_k^\top\right]=:S\in\R^{\tilde{d}\times\tilde{d}}$ is a positive definite matrix independent of $k$,
\item[(c)]  for all $k_1,k_2,k_3,k_4\in\Z$ and for all $i_1,\dots, i_4\in\{1,2,\dots,\tilde{d}\}$, $$\E\left[\bfw_{k_1,i_1}\bfw_{k_2,i_2}\bfw_{k_3,i_3}\bfw_{k_4,i_4}\right]< \infty,$$ where $\bfw_{k,i}$ denotes the $i$-th component of $\bfw_{k}$, and
\item[(d)] $\sum\limits_{k=p+1}^n\frac{1}{n-p}\bfw_k\bfw_k^\top \stackrel{p}{\longrightarrow}  S$,
\end{itemize}
then, for $n\longrightarrow \infty$,
$
{1}/{\sqrt{n-p}}\sum_{k=p+1}^n\bfw_k \stackrel{\mathrm{d}}{\longrightarrow} \mathcal{N}_{\tilde{d}}(0_{\tilde{d}}, S).
$

\vspace{0.3cm}
\strut\\ \emph{Proof of (a)} Define the filtration $\left(\mathcal{H}_k\right)$ by setting $$\mathcal{H}_k:=\sigma\big(\left(\bfu_i, \bfX_{i-1},\bfX_{i-2}, \dots,\bfX_{i-p}\big):\ i\leq k\right),\,k\in\Z .$$ Then one can easily check that $\bfw_k=\vec\left(\bfu_k \cdot \bfZ_k^\top\right)$ is  $\mathcal{H}_k$- measurable. 
It suffices to prove the martingale-difference property for the sequence $\left(\bfu_k\right)$ since $\bfX_{k'}$ for $k'<k$ and therefore $\bfZ_k$ are $\mathcal{H}_{k-1}$-measurable. But then, because 
$$
\E\left[\bfX_k \Big|\mathcal{H}_{k-1}\right]= \E\left[\bfepsilon_k +  \sum\limits_{m = 1}^p A_m \circledast \bfX_{k-m} \Big|\mathcal{H}_{k-1}\right] = \bfa_0 +  \sum\limits_{m = 1}^p A_m \bfX_{k-m},
$$
we obtain the martingale difference property:
\begin{align*}
\E\left[\bfu_k\big|\mathcal{H}_{k-1}\right] 
&= \E\left[\bfX_k - \bfa_0 - \sum\limits_{m = 1}^p A_m \bfX_{k-m}\Big|\mathcal{H}_{k-1}\right]
=\E\left[\bfX_k |\mathcal{H}_{k-1}\right]- \bfa_0 - \sum\limits_{m = 1}^p A_m \bfX_{k-m}
= 0_d.
 \end{align*}
\emph{Proof of (b)} Independency of $k$ follows from stationarity of $\left(\bfw_k\right)$. Choose $k=0$.
We need to show that, for $b \in \R^{d(pd+1)}\setminus \{ 0_{d(pd+1)}\}$,
\begin{equation}
b^\top \E\left[\bfw_k\bfw_k^\top\right]b = \E\left[b^\top \bfw_0\bfw_0^\top b^\top \right] = \Var\left(b^\top \bfw_0\right)>0.\label{claim}
\end{equation}
With \eqref{kronecker_rule}, we find
\begin{align}
\bfw_0 = \vec\left(\bfu_0 \cdot\bfZ_0^\top\right)=   \left( \bfZ_0\otimes 1_{d\times d}\right)\vec\left(\bfu_0\right)
= \left(\bfZ_0\otimes 1_{d \times d}\right)\bfu_0\label{bfw0}
\end{align}
and therefore
\begin{align*}
 \E\left[\bfw_0 \bfw_0^\top\right] 
 =  \E\left[\left(\ \bfZ_0\otimes 1_{d \times d}\right)\bfu_k\left(\left( \bfZ_0\otimes 1_{d \times d}\right)\bfu_0\right)^\top\right]
  =  \E\Big[\left(\bfZ_0\otimes 1_{d \times d}\right)\bfu_0\bfu_0^\top\left(\bfZ_0\otimes 1_{d \times d}\right)\Big].
\end{align*}
To establish \eqref{claim}, we define the $\sigma$-algebra % because there will always some component of the Poisson immigration term $\bfepsilon_k$ be involved. And this immigration term is independent of all the rest. 'All the rest' is
$$
\mathcal{F}:= \sigma\left(\bfX_{-1}, \dots ,\bfX_{-p}, A_1\circledast\bfX_{-1}, \dots ,A_p\circledast\bfX_{-p}\right).
$$
Note that $\bfZ_0$ is $\mathcal{F}$-measurable and $\bfepsilon_0$ is independent of $\mathcal{F}$. Using these facts when considering the expectation of the conditional variance of $b^\top \bfw_0$, we obtain
\begin{eqnarray}
\Var\left(b^\top \bfw_0\right)& = &\E\left[\Var\left(b^\top \bfw_0|\F\right) \right]  +  \Var\left(\E\left[b^\top \bfw_0|\F\right]\right) \nonumber\\
 &\geq&   \E\left[\Var\left(b^\top \bfw_0|\F\right) \right] \nonumber \\
 &\stackrel{\eqref{bfw0}}{=} &  \E\left[\Var\left(b^\top \left(\bfZ_0\otimes  1_{d\times d}\right)\bfu_0|\F\right) \right] \nonumber \\
  &=&   \E\left[b^\top \left(\bfZ_0\otimes  1_{d\times d}\right)\Cov\left(\bfu_0|\F\right) \left(b^\top \left(\bfZ_0\otimes  1_{d\times d}\right)\right)^\top\right]\label{conditional_variance}.
 \end{eqnarray}
Since
\begin{align}
\bfu_0 
 = \bfX_0 - \bfa_0 - \sum\limits_{i=1}^p A_i\bfX_{-i}
 = \bfepsilon_0 + \sum\limits_{i=1}^p A_i\circledast \bfX_{-i} - \bfa_0 - \sum\limits_{i=1}^p A_i\bfX_{-i},\label{bfu}
\end{align}
the summand $\bfepsilon_0$ is the only term that contributes to the conditional covariance matrix in \eqref{conditional_variance}---the other summands in \eqref{bfu} are constant with respect to $\mathcal{F}$ and $\bfepsilon_0$ is independent of $\mathcal{F}$. So we have $\Cov\left( \bfu_0|\F\right)= \Cov\left(\bfepsilon_0|\F\right)= \Cov\left(\bfepsilon_0\right)=\diag(\bfa_0)$ and continuing with \eqref{conditional_variance} we find 
\begin{align}
\Var\left(b^\top \bfw_0\right) &\geq   \E\left[b^\top \left(\bfZ_0^\top\otimes  1_{d\times d}\right)
\diag\left(\bfa_0\right)
\Big(b^\top \left(\bfZ_0^\top\otimes  1_{d\times d}\right)\Big)^\top\right]\nonumber\\
&=
\E\left[\sum\limits_{i=1}^{d}\bfa_{0,i}\Big(b^\top \left(\bfZ_0^\top \otimes  1_{d\times d}\right)\Big)_{1,i}^2\right]\nonumber\\
&\geq
\bfa_{0,i_0}\E\left[\Big(b^\top \left(\bfZ_0^\top \otimes  1_{d\times d}\right)\Big)_{1,i_0}^2\right]>0\label{bfa_margin},
\end{align}
where $i_0\in\{1,2,\dots,d\}$ in \eqref{bfa_margin} is  chosen in such a way that $\bfa_{0,i_0}>0$. (Remember that $\bfa_0\neq 0_d$, by assumption.)
The strict inequality in \eqref{bfa_margin} follows because, for $j_0\in\{1,2,\dots,np+d\}$ such that $b_{j_0}\neq 0$, we have that
\begin{align*}
\P\left[\Big(b^\top \left(\bfZ_0^\top\otimes  1_{d\times d}\right)\Big)_{1,i_0} \neq 0 \right] 
=\P\left[b^\top \cdot\left(\bfZ_0^\top\otimes  1_{d\times d}\right)_{\cdot,i_0} \neq 0 \right] 
\geq \P\big[b_{j_0}\bfX_{k_0,{l_0}} \neq 0\big]
&=  \P\left[\bfX_{k_0,{l_0}} \neq 0\right]  \\
&> 0,
\end{align*}
for some $k_0\in\Z$ and some ${l_0}\in\{1,2,\dots,d\}$ dependent on $j_0$. Note that
$\bfX_{k,{l}}$ denotes the $l$-th component of $\bfX_k$.
By stationarity, $k_0\in\Z$ is irrelevant. And the case that $\bfX_{0,l_0}  = 0\ a.s.$ for some  $l_0\in\{1,2,\dots,d\}$ we have excluded, so the strict inequality follows.\\
\vspace{0.5cm}

\emph{Proof of (c)} Note that claim $(c)$ follows if
$
\E\left[\bfX_{k_1, i_1} \cdots \bfX_{k_8, i_8}\right]<\infty$ for $ k_1,\dots,k_8\in\Z,\, i_1,\dots, i_8 \in\{1,2,\dots, d\}.
$
The boundedness of these expectations is established for the univariate case in Corollary 1 of \citet{kirchner16c}. For the multivariate case, one can argue similarly via the existence of the moment generating function in a neighborhood of zero.\\
 %as in the proof of Theorem 2.3. of \citet{marques05}.\\
\vspace{0.2cm}\strut\\
\emph{Proof of (d)} We show that $\left(\bfw_k\bfw_k^\mathrm{T}\right)$ is ergodic. Then the claim of $(d)$ follows with the Birkhoff-Khinchin Ergodic Theorem. The sequence $\left(\bfX_k\right)$ can be represented as margin of a $pd$-dimensional INAR$(1)$ sequence $\big(\tilde{\bfX}_k\big)$; see \citet{latour97}. It is easily checked that the latter is an irreducible, aperiodic Markov chain on $\mathbb{N}_0^{pd}$. So $\big(\tilde{\bfX}_k\big)$ is ergodic; see  \citet{durrett95}, page 338. As margins of ergodic processes are ergodic, $\left(\bfX_k\right)$ also is ergodic. 
As $\bfw_k$ can be written as a measurable function of the past of $\left(\bfX_k\right)$, $\left(\bfw_k\right)$ is also ergodic.
Finally, $\left(\bfw_k\bfw_k^\top\right)$ is ergodic because it is a measurable transformation of the ergodic sequence $\left(\bfw_k\right)$.
\begin{flushright}
{ $\square$}
\end{flushright}
\end{document}